\newtheorem{theorem}{Theorem}[section]
\newtheorem{proposition}[theorem]{Proposition}
\newtheorem{corollary}[theorem]{Corollary}
\newtheorem{remark}[theorem]{Remark}
\newtheorem{lemma}[theorem]{Lemma}
\newtheorem{definition}[theorem]{Definition}
\newtheorem{assumption}[theorem]{Assumption}
\def\qed{ \ \vrule width.2cm height.2cm depth0cm\smallskip}
\newenvironment{proof}{\noindent {\bf Proof.\/}}{$\qed$\vskip 0.1in}
\newcommand{\ba}{\begin{array}}
\newcommand{\ea}{\end{array}}
\newcommand{\be}{\begin{equation}}
\newcommand{\ee}{\end{equation}}
\newcommand{\bea}{\begin{eqnarray}}
\newcommand{\eea}{\end{eqnarray}}
\newcommand{\beaa}{\begin{eqnarray*}}
\newcommand{\eeaa}{\end{eqnarray*}}
\def\no{\noindent}
\def\bs{\bigskip}
\def\qed{ \hfill \vrule width.25cm height.25cm depth0cm\smallskip}
\newcommand{\basa}{\begin{assumption}}
\newcommand{\easa}{\end{assumption}}
\newcommand{\bas}{\begin{assum}}
\newcommand{\eas}{\end{assum}}
\def\1{{\bf 1}}
\def\:{\!:\!}
\numberwithin{equation}{section}
\begin{document}

\title{\bf  An It\^o-type formula for some measure-valued processes and its application on controlled superprocesses}
\author{Shang Li}

\author{
Shang Li\thanks{\noindent Department of
Mathematics, Nankai University; email: 1944755529@qq.com.}
}

\date{\today}
\maketitle

\begin{abstract}
We derive an Itô-type formula for a measure-valued process that has a decomposition analogous to a classical semimartingale. The derivation begins with a time partitioning approach similar to the classical proof of Itô's formula. To address the new challenges arising from the measure-valued setting, we employ symmetric polynomials to approximate the second-order linear derivative of the functional on finite measures, alongside certain localization techniques.

A controlled superprocess with a binary branching mechanism can be interpreted as a weak solution to a controlled stochastic partial differential equation (SPDE), which naturally leads to such a decomposition. Consequently, this Itô-type formula makes it possible to derive the Hamilton-Jacobi-Bellman (HJB) equation and the verification theorem for controlled superprocesses with a binary branching mechanism. Additionally, we propose a heuristic definition for the viscosity solution of an equation involving derivatives on finite measures. We prove that a continuous value function is a viscosity solution in this sense and demonstrate the uniqueness of the viscosity solution when the second-order derivative term on the measure vanishes.
 \end{abstract}

 \bs
\no{\bf Keywords.} It\^o's formula, measure-valued processes, controlled superprocesses, linear functional derivative

\section{Introduction}\label{sec1}

It\^o's formula is a cornerstone in stochastic calculus, characterizing the infinitesimal change of a function of a semimartingale process, and establishing a connection between partial differential equations (PDEs) and stochastic processes \cite{bib13}. By advancing the Hamilton-Jacobi-Bellman (HJB) equation, the verification theorem, and the existence and uniqueness of viscosity solutions, It\^o's formula has become a key tool in stochastic control theory \cite{bib10, bib11, bib12}.

With recent developments in mean-field games and mean-field control theory, an It\^o-type formula for flows of probability measures of semimartingales was established using the tools of Lions derivatives and linear derivatives \cite{bib16, bib9, bib8}. This It\^o-type formula is essential for the derivation of the master equation in mean-field games and for the dynamic programming approach to the McKean-Vlasov control problem.

In certain situations, it is insufficient to merely consider a flow of probability measures of a process. For instance, when modeling the evolution of large populations, it can be more appropriate to consider a measure-valued process, such as a particle system \cite{bib6}, a Dawson-Watanabe process \cite{bib7, bib18}, or a Fleming-Viot process \cite{bib5}. However, to the best of our knowledge, no comparable results exist for processes that take values in finite measures. The methods used in mean-field theory encounter two major challenges. First, functions on the space of finite measures cannot be extended to functions on a Hilbert space, which is necessary to define the Lions derivative. Second, due to the randomness of the values, it seems impossible to approximate a finite measure by empirical measures.

It has been shown in \cite{bib3, bib4} that a super-Brownian motion with a binary branching mechanism can be regarded as a weak solution to a stochastic partial differential equation (SPDE) driven by space-time white noise. Recently, \cite{bib14} made pioneering contributions by formalizing controlled superprocesses with a binary branching mechanism through the martingale problem approach. In this work, the generalized martingale problem and the HJB equations were derived using Lions derivatives and linear derivatives. \cite{bib14} also proved that a controlled superprocess, which does not necessarily have a density, can be regarded as a weak solution to an SPDE driven by a martingale measure. The SPDE resulting from this controlled martingale problem provides a semimartingale-like decomposition for measure-valued processes. This suggests that an It\^o-type formula for certain measure-valued processes can be established using standard methods involving linear derivatives.

Our work aims to establish an It\^o-type formula for measure-valued processes that have a form resembling the classical semimartingale decomposition. From this It\^o-type formula, we immediately derive the generalized martingale problem for controlled superprocesses with a binary branching mechanism \cite{bib14} in new way. We also provide a formalization of controlled superprocesses following the framework of \cite{bib1, bib2, bib15}, which differs from the formalization in \cite{bib14}. In this framework, we establish the dynamic programming principle using the measurable selection theorem, following the approach of \cite{bib1, bib2}. This provides a new method for deriving the HJB equation and the verification theorem for controlled superprocesses from the dynamic programming principle. Additionally, this type of It\^o-type formula allows us to prove that the value function is the viscosity solution to the HJB equation. We also hope that this It\^o-type formula will serve as a useful tool for studying other measure-valued processes and SPDE systems.

A new challenge arises when proving this It\^o-type formula: when using Taylor's expansion for linear derivatives, the sum of the second-order terms is no longer a discrete version of a quadratic variation. This obstacle is overcome by using a linear combination of products of unary functions to approximate the second-order linear derivatives. This is feasible when the second-order linear derivatives are symmetric and continuous. We then employ localization techniques to rigorously justify the approximation procedure.

The paper is organized as follows. Section \ref{sec2} presents some notations and preliminaries for the concepts and tools used in this article. In Section \ref{sec3}, we prove the It\^o-type formula for certain measure-valued processes and derive some corollaries. In Section \ref{sec4}, we discuss its application to the HJB equation and the verification theorem for controlled superprocesses. Section \ref{sec5} discusses the definition, existence and uniqueness of viscosity solutions for a measure-valued control problem.

\section{Notations and Preliminaries}\label{sec2}

In this subsection, we introduce the space of finite measures and provide the necessary topological and functional background required for the development of later sections.

\subsection{Space of finite measures}

Given a Polish space $(E,d)$, let $\mathscr{B}(E)$ denote the Borel $\sigma$-algebra, which is the $\sigma$-algebra generated by the open sets. Let $C_b(E)$ denote the space of bounded continuous functions on E. We use $M(E)$ to denote the set of finite measures on the space $(E, \mathscr{B}(E))$. For some $\mu\in M(E)$ and $f\in C_b(E)$, we write $\mu(f)$ or $\langle f,\mu\rangle$ for the integral $\int_E f(x)\mu(dx)$. We equip $M(E)$ the weak topology $\tau$, which is the weakest topology to make $F_f(\mu)=\langle \mu, f \rangle$ continuous for all $f\in C_b(E)$. For some $f\in C(E)$ and $r>0$. Let $U(f,r)= \{\mu\in M(E): -r<\mu(f)<r\}$. Then it can be verified that $\{U(f,r)| f\in C_b(E), r>0\}$ is a subbasis of $\tau$.

When $E$ is $\mathbb{R}^d$, we can find a countable sequence $(\varphi_k)_{k\leq 0}$  in $C^2_b(\mathbb{R}^d)$ with $\varphi_0=1$ such that $\|\varphi_k\|_\infty\leq 1$ and $\varphi_k$ is dense in $\{f\in C_b(\mathbb{R}^d) :  \|f\|_\infty \leq 1\}$ (see, e.g., \cite{bib18}). We can define a distance on $M(\mathbb{R}^d)$, 
$$d(\mu,\lambda)=(\sum_{k=0}^\infty \frac{1}{2^k q_k}\langle\varphi_k, \mu-\lambda\rangle^2)^{\frac{1}{2}},$$
where $q_k=\max\{1,\|D\varphi_k\|^2_\infty,\|D^2\varphi_k\|^2_\infty\}.$ It can be verified that the topology induced from distance $d$ is equal to $\tau$(see, e.g. \cite{bib18}). We use the 2-norm instead of the 1-norm for its better differential properties, which will be discussed later.

When $E=\mathbb{R}$, let $\rho^N$ be a smooth cut-off function taking values in $[0,1]$ such that 
\begin{equation}
    \rho^N(x)=\left\{
    \begin{aligned}
        &1 ,\quad x\in [-N,N];\\
        &0 ,\quad x\in [-(N+1),N+1]^c.
    \end{aligned}
    \right.
\end{equation}
Denote $\mu^N$ the measure $\rho^n(x)\mu(dx)$.

\subsection{Linear functional derivative}
Linear functional derivative and Lions derivative were two critical tools to deal with the differential property of the space of probability measures in the mean field theory. The first can also be a good tool for the space of finite measures as the discussions in \cite{bib14}. More details can be found in \cite{bib16}.

\begin{definition}\label{definition3}
    For a function $f:M(E)\to \mathbb{R}$, the linear derivative of $f$ is a function $\frac{\delta f}{\delta \mu}:M(E)\times E\to \mathbb{R}$ such that for any $\mu$, $\lambda\in M(E)$,
    $$f(\mu)-f(\lambda)=\int_0^1\int_E \frac{\delta f}{\delta \mu}(\lambda+t(\mu-\lambda),x)(\mu-\lambda)(dx)dt.$$
\end{definition}
If the linear derivative of $f$ is bounded and continuous for the product topology, we say that $f\in C_b^1(M(E))$. If we additionally have that $\frac{\delta f}{\delta \mu}$ is continuously differentiable in some sense with respect to the spatial components, we say that $f\in C^{1,1}(M(E))$. More generally, we denote $C^{m,n}(M(E))$ to be the set of functions that are $m$ times differentiable with respect to the measure, and is $n$ times differentiable with respect to the spatial components for any order (less than $m$) of its derivative with respect to its measure. If additionally we have all these derivatives with respect to the spatial components are bounded, we write $f\in C^{m,n}_b$.

\subsection{Martingale measures}
The concept of martingale measure was introduced by Walsh in \cite{bib19}. It is proved in \cite{bib20} that some type of superprocesses can be viewed as solutions of SPDEs driven by space-time white noise. A controlled superprocess, which may not has a density, can be viewed as the solution a SPDE driven by a martingale measure \cite{bib14}. 

\begin{definition}\label{definition1}
    Let $(E,\mathscr{E})$ be a Lusin space, i.e a measurable space measurably isomorphic to a Borel set of a compact metric space. Let $(\Omega,\mathscr{F},\mathbb{P})$ be a probability space. Then a function $U: \mathscr{E}\times\Omega\to\mathbb{R}$ is called a $L^2$-valued measure if
    \begin{itemize}
        \item $E[U(A)^2]<\infty$, for any $A\in\mathscr{E}$;
        \item $U(\cdot,\omega)$ is $\sigma$-additive in $L^2$-norm, i.e. for a sequence of disjoint sets $A_n$ in $\mathscr{E}$ such that 
        $$\cup_{i=1}^n A_n=A,$$
        we have
        $$\lim_{n\to\infty}\sum_{i=1}^n U(A_n,\cdot)=U(A,\cdot)$$
        in the sense of $L^2$-norm.
    \end{itemize}
\end{definition}

     We denote $\|U(A)\|_2=[E(U(A)^2)]^{\frac{1}{2}}$ to be the $L^2$-norm of $U(A)$. A $L^2$-valued measure is called $\sigma$-finite if there exists a sequence of $E_n\in\mathscr{E}$ whose union is $E$, such that for each $n$,
    $$\sup_{A\in\mathscr{E}\cap E_n}\|U(A)\|_2<\infty.$$

\begin{definition}
     Let $(\Omega,\mathscr{F},\mathbb{P},(\mathscr{F}_t)_{t\leq 0})$ be a filtered probability space. Then $\mathcal{M}:[0,T]\times\Omega\times\mathscr{E}$ is called a martingale measure is
    \begin{itemize}
        \item $\mathcal{M}_0(A)=0$, a.s. for any $A\in\mathscr{E}$;
        \item $\mathcal{M}_t(\cdot)$ is a $\sigma$-finite, $L^2$-valued measure for all $t\in [0,T]$;
        \item $\mathcal{M}_{\cdot}(A)$ is a $(\mathscr{F}_t)_{t\leq 0}$ martingale for any $A\in\mathscr{E}$.
    \end{itemize}
\end{definition}

A martingale measure is called orthogonal if for any two disjoint sets $A_1$, $A_2$, $\mathcal{M}_t(A_1)$ and $\mathcal{M}_t(A_2)$ are two orthogonal martingales. From now on, all martingale measures are supposed to be orthogonal and finite.

Let $Q:\mathscr{E}\times [0,T]\to \mathscr{R}$ be a function such that $Q(A,t)= \langle \mathcal{M}(A),\mathcal{M}(A)\rangle_t$. Then there exists a random measure $Q$ (with some abuse of notation) called the density measure of $\mathcal{M}$ on $(E\times[0,T], \mathscr{E}\times\mathscr{B}([0,T]))$ such that 
$$Q(A,(s,t])=Q(A,t)-Q(A,s)$$ 
for all $A\in\mathscr{E}$ and $0\leq s<t\leq T$ .

We then introduce the procedure in \cite{bib19} of the establishment of the stochastic integral for martingale measures, which is similar to the procedure for Ito integral.

\begin{definition}
    A function $f:E\times [0,T]\times\Omega\to\mathbb{R}$ is called elementary if it has the form:
    $$f(x, t, \omega)= X(\omega) I_{(a,b]}(t) I_{A}(x),$$
    where $0\leq a<b\leq T$, $A\in\mathscr{E}$, $X\in\mathscr{F}_a$ and is bounded. A function is called simple if it is a finite sum of some elementary functions. We denote the set of simple functions by $\mathscr{S}$.
\end{definition}

Let $\mathscr{G}$ be the $\sigma$-algebra on $E\times [0,T]\times\Omega$ generated by $\mathscr{S}$. Let $\mathscr{P}_{\mathcal{M}}$ be the set of $\mathscr{G}$-measurable functions such that 
$$E\int_{[0,T]}\int_E(f(x,t))^2 Q(dx dt)<\infty.$$

We first define the stochastic integral for an elementary function $$f(x, t, \omega)= X(\omega) I_{(a,b]}(s) I_{A}(x)$$ to be a martingale measure $f\cdot \mathcal{M}$ such that
$$(f\cdot \mathcal{M})_t(B)= X(\omega)(\mathcal{M}_{t\wedge b}(A\cap B))-\mathcal{M}_{t\wedge a}(A\cap B).$$
We can then define the stochastic integral for the functions in $\mathscr{S}$ by linearity.
Then we can define the stochastic integral for the functions in $\mathscr{P}_\mathcal{M}$
by the $L^2$-limits since $\mathscr{S}$ is dense in $\mathscr{P}_{\mathcal{M}}$.

\begin{proposition}
    For a function $f\in\mathscr{P}_{\mathcal{M}}$, $f\cdot \mathcal{M}$ is a martingale measure with density measure $f(x,t)^2 Q(dx dt).$
\end{proposition}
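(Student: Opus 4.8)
The plan is to follow Walsh's standard two-stage construction: first establish the claim for simple integrands by direct computation, then pass to the limit for general $f\in\mathscr{P}_{\mathcal{M}}$ using the $L^2$-isometry.

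First I would treat an elementary function $f(x,t,\omega)=X(\omega)I_{(a,b]}(t)I_A(x)$. For such $f$ the integral $(f\cdot\mathcal{M})_t(B)=X\big(\mathcal{M}_{t\wedge b}(A\cap B)-\mathcal{M}_{t\wedge a}(A\cap B)\big)$ is explicit, so I can verify the three defining properties of a martingale measure one at a time. The condition $(f\cdot\mathcal{M})_0(B)=0$ is immediate; that $B\mapsto (f\cdot\mathcal{M})_t(B)$ is a $\sigma$-finite, $L^2$-valued measure follows from the corresponding property of $\mathcal{M}$ together with the boundedness of $X$; and the martingale property in $t$ follows because $X\in\mathscr{F}_a$ acts as a constant multiplier on the stopped martingale $\mathcal{M}_{\cdot\wedge b}(A\cap B)-\mathcal{M}_{\cdot\wedge a}(A\cap B)$, a standard martingale-transform argument. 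By linearity these properties extend to all simple $f\in\mathscr{S}$.

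Next I would compute the covariance structure for simple integrands. Writing $f$ as a finite sum of elementary pieces and invoking the standing orthogonality assumption on $\mathcal{M}$ (so that contributions from disjoint spatial sets do not interact), a direct bracket computation gives the isometry
$$\big\|(f\cdot\mathcal{M})_t(B)\big\|_2^2 = E\int_0^t\int_{B}f(x,s)^2\,Q(dx\,ds),$$
and more precisely identifies the predictable quadratic variation as $\langle (f\cdot\mathcal{M})(B)\rangle_t=\int_0^t\int_B f(x,s)^2\,Q(dx\,ds)$, i.e. the density measure of $f\cdot\mathcal{M}$ is $f(x,t)^2\,Q(dx\,dt)$. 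This step is where the orthogonality hypothesis on $\mathcal{M}$ is essential, since it is what makes the cross terms vanish and produces a clean $L^2$-isometry.

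Finally, for a general $f\in\mathscr{P}_{\mathcal{M}}$ I would choose simple functions $f_n\to f$ in $L^2(Q)$ (possible since $\mathscr{S}$ is dense in $\mathscr{P}_{\mathcal{M}}$) and define $(f\cdot\mathcal{M})_t(B)$ as the $L^2$-limit of $(f_n\cdot\mathcal{M})_t(B)$, which exists by the isometry and the Cauchy property of $(f_n)$. The three martingale-measure properties are preserved under $L^2$-limits — the martingale property because $L^2$-convergence commutes with conditional expectation, and the $\sigma$-finite $L^2$-valued measure property by completeness — while letting $n\to\infty$ in the isometry above transfers the density measure $f^2\,Q$ to $f\cdot\mathcal{M}$. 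I expect the main obstacle to be the bookkeeping in the covariance computation for simple functions, together with the verification that the limiting object is genuinely a ($\sigma$-finite, $L^2$-valued) martingale measure rather than merely a well-defined family of $L^2$ random variables; the orthogonality assumption is precisely what keeps these steps tractable.
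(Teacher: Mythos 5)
Your proposal is correct and is exactly the standard Walsh construction that the paper itself relies on: the paper states this proposition without proof, as background recalled from Walsh's lecture notes, and the construction it describes immediately beforehand (elementary functions, extension by linearity to $\mathscr{S}$, then $L^2$-limits via density) is precisely the two-stage argument you carry out. Your sketch simply fills in the verification — the martingale-transform step for elementary integrands, the bracket computation using orthogonality, and the passage of the isometry and martingale property through the $L^2$-limit — that the paper delegates to the reference.
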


\subsection{Controlled superprocesses}

The distribution of a super-diffusion process with Lipschitz coefficients $b,\sigma$ and binary branching mechanism $\phi(x,\varphi)=\gamma(x)\varphi^2(x)$ is the solution of the following martingale problem.

\begin{equation}
    F_{\varphi}(\mu_s)-F_{\varphi}(\mu_t)-\int_t^s\int_{\mathbb{R}}\mathscr{L}F_\varphi (x,\mu_u)\mu_u(dx)du
\end{equation}
is a martingale in some probability space $(\Omega,(\mathscr{F}_t)_{t\geq 0},P)$
where $F_\varphi(\lambda):=F(\left<\varphi,\lambda\right>)$, for any $F,\varphi\in C^2_b(\mathbb{R})$ and $\lambda\in M(\mathbb{R})$. The generator $\mathscr{L}$ is defined by

$$\mathscr{L}F_\varphi(x,\lambda)=F'(\left<\varphi,\lambda\right>)L\varphi(x)+\frac{1}{2}F''(\left<\varphi,\lambda\right>)\gamma(x)\varphi^2(x),$$
where the generator $L$ is defined by

$$L\varphi(x)=b(x)\varphi'(x)+\frac{1}{2}\sigma^2(x)\varphi''(x).$$

It's natural to let the coefficients $b,\sigma,\gamma$ depend on the current global measure $\lambda$ and some control $a$, then a controlled superprocess can be formalized following the weak control approach in \cite{bib15}\cite{bib1}\cite{bib2} with a little modification since we would distribute a control to all infinitesimal particles in a measure as in\cite{bib14}.

Let $A$ be a compact subset of $\mathbb{R}$ that represents the set of actions. Define
$$(b,\sigma,\gamma):\mathbb{R}\times M(\mathbb{R})\times A \to \mathbb{R}\times\mathbb{R}\times\mathbb{R}^+$$
as bounded and continuous functions. Additionally, we suppose $b,\sigma$ and $\gamma$ are Lipschitz uniformly in a. Define the generator $L$ depending on the global measure $\lambda$ with control $a$ by
$$L\varphi(x,\lambda,a)=b(x,\lambda,a)\varphi'(x)+\frac{1}{2}\sigma^2(x,\lambda,a)\varphi''(x)$$
for any $\varphi\in C^2_b(\mathbb{R})$.
Also define the generator $\mathscr{L}$ with control $a$ by
$$\mathscr{L}F_\varphi(x,\lambda,a)=F'(\left<\varphi,\lambda\right>)L\varphi(x,
\lambda,a)+\frac{1}{2}F''(\left<\varphi,\lambda\right>)\gamma(x,\lambda,a)\varphi^2(x)$$
for any $F,\varphi\in C^2_b(\mathbb{R})$.

Consider the canonical space $D([0,T];M(\mathbb{R}))$. Let $\mathbb{M}$ be a subset of $M([0,T]\times \mathbb{R}\times A)$ whose projection on $[0,T]$ is Lebesgue measure.
Let $\Omega=D([0,T];M(\mathbb{R}))\times \mathbb{M}$, $\mathscr{F}_t$ be the $\sigma$-algebra generated by the canonical process and the measures $$\{m([0,s]\times B_1\times A_1): B_1\in\mathscr{R}, A_1\in\mathscr{B}(A),0\leq s\leq t\}.$$ Denote $\mu$ and $m$ as the projection from $\Omega$ to $D([0,T];M(\mathbb{R}))$ and $\mathbb{M}$.

Next we introduce the notation of the concatenation of probability measures on $\Omega$, which is useful in the proof dynamic principle. For $(t,w)\in [0,T]\times\Omega$, denote
    $$D^t_w := \{\omega\in\Omega: \mu_t)(\omega)=\mu_t(w)\}$$
and
    $$D_{t,w}:=\{\omega\in\Omega:(\mu_s, m_s(\phi))(\omega)=(\mu_s,m_s(\phi))(w), \forall s \in [0,t], \phi\in C_b([0,T]\times\mathbb{R}\times A)\},$$
where $m_s(\phi)=\int_0^s\int_{\mathbb{R}\times A}\phi(u,x,a)m(du,dx,da)$.
We denote the concatenation of two paths $w$ and $\omega$ such that $\omega\in D^t_w$ at time $t$ by $w\otimes_t \omega$, which is defined by
\begin{equation}
    (\mu_s,m_s(\phi))(w\otimes\omega)=\left\{
    \begin{aligned}
        &(\mu_s,m_s(\phi))(w), s\in[0,t] \\
        &(\mu_s,m_s(\phi)-m_t(\phi))(\omega) + (0,m_t(\phi)(w)), s\in (t,T].
    \end{aligned}
    \right.
\end{equation}
    Then for a probability measure $\mathbb{P}$ and a family of $\mathscr{F}_\tau$- probability kernel $(\mathbb{Q}_w)_{w\in\Omega}$ such that $\mathbb{Q}_w(D^{\tau(w)}_w)=1$, we define the concatenated probability measure $\mathbb{P}\otimes_{\tau}\mathbb{Q}_\cdot$ by
    $$\mathbb{P}\otimes_{\tau}\mathbb{Q}_\cdot(B):=\int_\Omega\mathbb{P}(dw)\int_\Omega 1_B(w\otimes_{\tau(w)}\omega)\mathbb{Q}_w (d\omega).$$

\begin{definition}\label{definition2}
    Given some $(t,\lambda)\in [0,T]\times D([0,T],M(\mathbb{R}))$, a weak control rule with initial condition $(t,\lambda)$ is a probability measure $\mathbb{P}\in \mathscr{P}(\Omega)$ such that
    \begin{itemize}
        \item $\mathbb{P}(\{\mu_s=\lambda_s, 0\leq s\leq t\})=1$;
        \item $\mathbb{P}(\{\omega=(\mu, m): \exists \alpha\in B([0,T]\times\mathbb{R}; A) , m(ds, dx, da)=ds\mu_s(dx)\delta_{\alpha(s,x)}(da)\})=1$;
        \item $F_{\varphi}(\mu_s)-\int_t^s \int_{\mathbb{R}\times A} \mathscr{L}F_\varphi(x,\mu_u,a)m(da, dx, du)$ is a $(\mathbb{P},\mathscr{F})$ martingale for any $F, \varphi\in C^2_b(\mathbb{R})$ and $0\leq t\leq s\leq T$.
    \end{itemize}
    We denote this set $\mathscr{P}_{t,\lambda}$. 
\end{definition}

\begin{remark}
    In\cite{bib14}, a weak control is required additionally that there exist some $\{\mathscr{B}(\mathbb{R})\otimes\mathscr{F}_t\}_{t\in [0,T]}$-predictable $\alpha: [0,T]\times \mathbb{R}\times D([0,T]; M(\mathbb{R}))\to A$ such that
    $$\mathbb{P}(\{(\mu, m):  m(ds, dx, da)=ds\mu_s(dx)\delta_{\alpha(s,x)}(da)\})=1.$$ This is different from the second requirement in Definition\ref{definition2}. \cite{bib14} claims that a weak control rule exists given any $\{\mathscr{B}(\mathbb{R})\otimes\mathscr{F}_t\}_{t\in [0,T]}$-predictable $\alpha$, which is invalid to the best of our knowledge because we don't know how to solve a SDE with Borel coefficients. The method in \cite{bib14} is valid when $\alpha$ takes a fixed value which guarantees the set of weak control rules for any initial condition is non-empty (This is different from the traditional martingale problem for superprocesses with binary mechanism since the coefficients of the diffusion process and the mechanism depend on the global measure). 
\end{remark}

 We can then consider a optimal control problem for superprocesses. Given two continuous functions $f:\mathbb{R}\times M(\mathbb{R})\times A\to \mathbb{R}$ and $g:M(\mathbb{R})\to \mathbb{R}$ such that there exists some $C, p, q>0$ satisfying
    $$f(x,\lambda,a)|\leq C(1+d(\lambda,0)^p),$$
    $$g(\lambda)\leq c(1+d(\lambda,0)^p),$$
\begin{definition}
The cost function $J:[0,T]\times D([0,T];M(\mathbb{R}))\times \mathscr{P}(\Omega)\to \mathbb{R}$ is defined as
    $$J(t,\lambda,\mathbb{P}):=E^{\mathbb{P}}[\int_t^T\int_{\mathbb{R}}f(x,\mu_u,a)m(du, dx, da)+g(\mu_T)].$$
The value function $V:[0,T]\times D([0,T];M(\mathbb{R}))\to\mathbb{R}$ is defined as
 $$V(t,\lambda):=\inf_{\mathbb{P}\in\mathscr{P}_{t,\lambda}}J(t,\lambda,\mathbb{P}).$$
\end{definition}

\begin{remark}
    The value function is well-defined since the expectation of the supremum of the $p$-norm of $\mu_u$ is finite, the proof of which can be seen in \cite{bib14}.
\end{remark}
 
\begin{remark}\label{remark1}
    We can formulate the control problem in another way. For some fixed initial condition $(t,\lambda)$, a controlled superprocess is a term 
    $$(\Omega,\mathbb{F},(\mathscr{F}_t)_{t\in[0,T]}, \mathbb{P}, \mu , \alpha)$$
    such that $\mu$ is a $(\mathscr{F}_t)_{t\in [0,T]}$-adapted process satisfying $\mu_{t\wedge\cdot}=\lambda_{t\wedge\cdot}$ and $\alpha: [0,T]\times \mathbb{R}\times \Omega\to A$ is .
    $$F_{\varphi}(\mu_s)-\int_s^t \int_{\mathbb{R}\times A} \mathscr{L}F_\varphi(x,\mu_u,a)m(da, dx, du)$$ 
    is a $(\mathbb{P},\mathbb{F})$ martingale for any $F, \varphi\in C^2_b(\mathbb{R})$ and $t\leq s\leq T$. It can be easily checked that the value functions of the two formulations equal. So we can regard the two formulation as the same.
\end{remark}

\begin{proposition}\label{proposition3}
    Given a weak control rule $(\Omega,\mathbb{F},\mathbb{P})$, there exists an natural extension $(\hat{\Omega},\hat{\mathbb{F}},\hat{\mathbb{P}})$ of $(\Omega,\mathbb{F},\mathbb{P})$ supporting a continuous $\hat{\mathbb{F}}$-martingale measure $\mathcal{M}$ with intensity measure $\mu_u(dx)du$ such that
    \begin{equation}\label{equation1}
    \begin{aligned}
    \left<f, \mu_t -\mu_s \right>=\int_s^t\int_\mathbb{R} Lf(x,\mu_u,\alpha_u(x))\mu_u(dx)du\\
    +\int_s^t \int_\mathbb{R} \sqrt{\gamma(x,\mu_u,\alpha_u(x))}f(x)\mathcal{M}(dx,du).
    \end{aligned}
    \end{equation}
for all $f\in C^\infty_b(\mathbb{R})$ and $s,t\in [0.T]$.
\end{proposition}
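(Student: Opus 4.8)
The plan is to read off the decomposition \eqref{equation1} directly from the martingale problem in Definition~\ref{definition2} by specializing the outer function $F$, to identify the resulting family of martingales with an orthogonal martingale measure, and finally to realize that measure as a stochastic integral against a single martingale measure of intensity $\mu_u(dx)\,du$ after enlarging the probability space to absorb the degeneracy of $\gamma$.

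First I would apply the third bullet of Definition~\ref{definition2} with $F(y)=y$, so that $F_\varphi(\mu)=\langle\varphi,\mu\rangle$ and $\mathscr{L}F_\varphi(x,\lambda,a)=L\varphi(x,\lambda,a)$. Using the disintegration $m(ds,dx,da)=ds\,\mu_s(dx)\,\delta_{\alpha(s,x)}(da)$ from the second bullet, this shows that
$$M^\varphi_t:=\langle\varphi,\mu_t\rangle-\langle\varphi,\mu_0\rangle-\int_0^t\int_\mathbb{R} L\varphi(x,\mu_u,\alpha_u(x))\,\mu_u(dx)\,du$$
is a $(\mathbb{P},\mathbb{F})$-martingale, whose increments between $s$ and $t$ are exactly the drift part of \eqref{equation1}. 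Taking $F(y)=y^2$ gives $\mathscr{L}F_\varphi(x,\lambda,a)=2\langle\varphi,\lambda\rangle L\varphi(x,\lambda,a)+\gamma(x,\lambda,a)\varphi^2(x)$; comparing the associated martingale with the one produced by the classical Itô formula applied to $(M^\varphi_t)^2$ — using that the finite-variation part of $\langle\varphi,\mu_\cdot\rangle$ is continuous — identifies the predictable bracket
$$\langle M^\varphi\rangle_t=\int_0^t\int_\mathbb{R}\gamma(x,\mu_u,\alpha_u(x))\,\varphi^2(x)\,\mu_u(dx)\,du.$$
Polarization in $\varphi$ then yields $\langle M^\varphi,M^\psi\rangle_t=\int_0^t\int_\mathbb{R}\gamma\,\varphi\psi\,\mu_u(dx)\,du$. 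Since this bracket is absolutely continuous in $t$, each $M^\varphi$ is continuous (this is where the binary branching structure is used), matching the continuity asserted for $\mathcal{M}$.

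Next I would check that $\varphi\mapsto M^\varphi$ extends to an orthogonal martingale measure $N$ in the sense of Section~2.3: linearity and the covariance structure above let one build $N_t(A)$ by approximation and verify $\sigma$-additivity in $L^2$ together with the martingale property, with density measure $Q(dx,du)=\gamma(x,\mu_u,\alpha_u(x))\,\mu_u(dx)\,du$. The heart of the argument is then a representation step: I want a martingale measure $\mathcal{M}$ of intensity $\mu_u(dx)\,du$ with $N(\varphi)=\int\sqrt{\gamma}\,\varphi\,d\mathcal{M}$. On $\{\gamma>0\}$ one sets $\mathcal{M}(dx,du)=\gamma^{-1/2}N(dx,du)$, whose covariance is $\gamma^{-1}\cdot\gamma\,\mu_u(dx)\,du=\mu_u(dx)\,du$. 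On $\{\gamma=0\}$, where $N$ carries no mass, $\mathcal{M}$ cannot be recovered from $N$, so I would pass to the natural extension $(\hat\Omega,\hat{\mathbb{F}},\hat{\mathbb{P}})$ by adjoining an independent white noise $W$ of intensity $\mu_u(dx)\,du$ and set
$$\mathcal{M}(dx,du)=\gamma^{-1/2}\mathbf{1}_{\{\gamma>0\}}\,N(dx,du)+\mathbf{1}_{\{\gamma=0\}}\,W(dx,du).$$
A direct covariance computation gives intensity $\mu_u(dx)\,du$, and since $\sqrt{\gamma}\,\mathbf{1}_{\{\gamma=0\}}=0$ one recovers $\int\sqrt{\gamma}\,\varphi\,d\mathcal{M}=N(\varphi)=M^\varphi$, which combined with the first display is exactly \eqref{equation1} for $f=\varphi$; a standard density argument extends it to all $f\in C^\infty_b(\mathbb{R})$.

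I expect the main obstacle to be this representation step on the degeneracy set $\{\gamma=0\}$: making $\mathcal{M}$ an honest continuous orthogonal martingale measure requires the square-root factorization of $N$ together with the adjoined independent noise, and one must verify that $\gamma^{-1/2}\mathbf{1}_{\{\gamma>0\}}$ is an admissible integrand in $\mathscr{P}_{\mathcal{M}}$ and that the two pieces stay orthogonal on the extension. This is the measure-valued analogue of the classical fact that a continuous martingale with absolutely continuous bracket becomes a Brownian stochastic integral after enlarging the space, and I would carry it out by invoking the martingale-measure representation theory of Walsh \cite{bib19} (in the form due to El~Karoui and M\'el\'eard).
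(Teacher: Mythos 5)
Your overall route is exactly the one the paper relies on: the paper's own ``proof'' of Proposition~\ref{proposition3} is a one-line citation to M\'el\'eard--Roelly \cite{bib20} and Ocello \cite{bib21}, and what you write out --- extracting the martingales $M^\varphi$ from the martingale problem of Definition~\ref{definition2}, identifying $\langle M^\varphi,M^\psi\rangle$ via the $F(y)=y^2$ computation and polarization, assembling an orthogonal martingale measure $N$ with density $\gamma(x,\mu_u,\alpha_u(x))\,\mu_u(dx)\,du$, and then setting $\mathcal{M}=\gamma^{-1/2}\mathbf{1}_{\{\gamma>0\}}N+\mathbf{1}_{\{\gamma=0\}}W$ on an extension carrying an independent noise $W$ of intensity $\mu_u(dx)\,du$ --- is precisely the construction contained in those references. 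So the skeleton is correct and is, in substance, the proof the paper outsources.

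Two steps would fail as you have justified them. First, and more seriously, your continuity argument is wrong: an absolutely continuous predictable bracket does \emph{not} imply that a martingale is continuous (a compensated Poisson process has $\langle M\rangle_t=\lambda t$ and jumps at every arrival). Continuity of $M^\varphi$, hence of $\mathcal{M}$, must instead come either from the locality of the generator $\mathscr{L}$ --- a Stroock--Varadhan-type argument showing that c\`adl\`ag solutions of martingale problems for second-order differential (hence local) operators have no jumps --- or from fourth-moment estimates on $\langle\varphi,\mu_t-\mu_s\rangle$ combined with the Kolmogorov criterion, which is how the superprocess literature (including \cite{bib20}) actually proceeds; ``binary branching'' enters through such estimates, not through absolute continuity of the bracket. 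Second, $F(y)=y$ and $F(y)=y^2$ do not belong to $C^2_b(\mathbb{R})$, which is the only class of test functions Definition~\ref{definition2} admits, so you cannot plug them into the martingale problem directly. You need smooth truncations $F_n$ agreeing with $y$ (resp.\ $y^2$) on $[-n,n]$, localization by the stopping times $T_n=\inf\{t:\langle 1,\mu_t\rangle\geq n\}$, and the moment bound $E^{\mathbb{P}}[\sup_{t}\langle 1,\mu_t\rangle^{p}]<\infty$ (which the paper invokes from \cite{bib14} when showing the value function is well defined) to conclude that $M^\varphi$ is a true martingale rather than a local one. Both repairs are standard, but without them the two central identifications in your argument are not established.
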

\begin{proof}
    The construction of this extension probability space can be seen in \cite{bib20}\cite{bib21}.
\end{proof}

\begin{remark}
    With Remark \ref{remark1} and Proposition \ref{proposition3}, we formulate a controlled superprocess as a weak solution of Equation \ref{equation1}. Particularly, when there is only a single point in the set of actions and the coefficients don't depend on the global measure, Equation \ref{equation1} degenerate to the famous result in \cite{bib4}. 
\end{remark}

\section{It\^o's formula for measure-valued processes}\label{sec3}

 In this section, we will establish an It\^o type formula for the measure-valued process that can be written as an SPDE driven by an martingale measure. Our work is inspired by the background and tools used in \cite{bib14} where it is established that a controlled superprocess with binary branching mechanism can be regarded as the weak solution of such a SPDE. So we will first derive the It\^o formula for that process.
 \subsection{It\^o's formula for controlled superprocesses}

\begin{theorem}[Ito's formula for controlled superprocesses]\label{theorem1}
    Suppose $(\mu_t)_{t\geq 0}$ is the (weak) solution of Equation \ref{equation1}, then for any $F\in C^{2,2}_b(M(\mathbb{R}))$ and $0\leq s<t\leq T$, we have
    \begin{equation}
    \begin{aligned}
          F(\mu_t)-F(\mu_s)=\int_s^t \int_\mathbb{R} L\frac{\delta}{\delta \mu} F(
        \mu_u,\cdot)(x,\mu_u,\alpha_u)\mu_u(dx)du\\
        +\frac{1}{2}\int_s^t\int_\mathbb{R} \gamma(x,\mu_u,\alpha_u(x)) \frac{\delta^2}{\delta \mu^2}F(\mu_u,x,x)\mu_u(dx)du\\
        +\int_s^t\int_\mathbb{R}\sqrt{\gamma(x,\mu_u,\alpha_u(x))}\frac{\delta}{\delta \mu}F(\mu_u,x)\mathcal{M}(dx,du)
    \end{aligned}
    \end{equation}
\end{theorem}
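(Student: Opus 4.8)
The plan is to imitate the classical partition-and-Taylor proof of Itô's formula, with ordinary derivatives replaced by the linear functional derivative of Definition \ref{definition3}. Fix $0\le s<t\le T$ and a sequence of partitions $s=t_0<t_1<\cdots<t_n=t$ whose mesh tends to zero, and telescope
$$F(\mu_t)-F(\mu_s)=\sum_{i}\big[F(\mu_{t_{i+1}})-F(\mu_{t_i})\big].$$
To each increment I apply a second-order expansion: writing $\mu^i_\theta:=\mu_{t_i}+\theta(\mu_{t_{i+1}}-\mu_{t_i})$ and $g_i(\theta):=F(\mu^i_\theta)$, Taylor's formula $g_i(1)-g_i(0)=g_i'(0)+\int_0^1(1-\theta)g_i''(\theta)\,d\theta$ gives
$$F(\mu_{t_{i+1}})-F(\mu_{t_i})=\Big\langle \tfrac{\delta F}{\delta\mu}(\mu_{t_i},\cdot),\,\mu_{t_{i+1}}-\mu_{t_i}\Big\rangle+R_i,$$
with remainder $R_i=\int_0^1(1-\theta)\iint\frac{\delta^2 F}{\delta\mu^2}(\mu^i_\theta,x,y)\,(\mu_{t_{i+1}}-\mu_{t_i})(dx)(\mu_{t_{i+1}}-\mu_{t_i})(dy)\,d\theta$. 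The summed first-order terms will produce the drift and stochastic integrals, and the summed remainders the second-order $\gamma$-term.

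For the first-order part I first extend Equation \reff{equation1} from $f\in C_b^\infty(\mathbb{R})$ to $f\in C_b^2(\mathbb{R})$ by density (both sides are continuous in the $C_b^2$-norm), which is legitimate since $\frac{\delta F}{\delta\mu}(\mu_{t_i},\cdot)\in C_b^2(\mathbb{R})$ by the $C^{2,2}_b$-regularity of $F$. As $\mu_{t_i}$ is $\mathscr{F}_{t_i}$-measurable and frozen on $[t_i,t_{i+1}]$, substituting $f=\frac{\delta F}{\delta\mu}(\mu_{t_i},\cdot)$ yields
$$\Big\langle\tfrac{\delta F}{\delta\mu}(\mu_{t_i},\cdot),\mu_{t_{i+1}}-\mu_{t_i}\Big\rangle=\int_{t_i}^{t_{i+1}}\!\!\int_{\mathbb{R}}L\tfrac{\delta F}{\delta\mu}(\mu_{t_i},\cdot)(x,\mu_u,\alpha_u(x))\,\mu_u(dx)\,du+\int_{t_i}^{t_{i+1}}\!\!\int_{\mathbb{R}}\sqrt{\gamma}\,\tfrac{\delta F}{\delta\mu}(\mu_{t_i},x)\,\mathcal{M}(dx,du).$$
Summing over $i$ and shrinking the mesh, the frozen $\mu_{t_i}$ converges to $\mu_u$ along the continuous path; since all derivatives of $F$ and the coefficients $b,\sigma,\gamma$ are bounded and continuous, dominated convergence for the Lebesgue part and an $L^2$-isometry estimate for the martingale-measure part (both controlled by the finite moments of $\langle 1,\mu_u\rangle$) deliver the first and third terms of the claimed formula.

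The heart of the proof is $\sum_i R_i$, whose difficulty is that $(\mu_{t_{i+1}}-\mu_{t_i})(dx)(\mu_{t_{i+1}}-\mu_{t_i})(dy)$ is a product of \emph{measure} increments, not a scalar squared increment, so it is not directly a discrete quadratic variation. The idea is to establish the limit first for separable (finite-rank) kernels and then extend by uniform approximation. For a fixed finite family $\{\phi_k\}_{k\le K}\subset C^2_b(\mathbb{R})$ the double integral of a separable kernel factorizes,
$$\iint\phi_k(x)\phi_l(y)(\mu_{t_{i+1}}-\mu_{t_i})(dx)(\mu_{t_{i+1}}-\mu_{t_i})(dy)=\langle\phi_k,\mu_{t_{i+1}}-\mu_{t_i}\rangle\,\langle\phi_l,\mu_{t_{i+1}}-\mu_{t_i}\rangle,$$
and by Equation \reff{equation1} each scalar increment splits into a drift of order $\Delta t_i$ plus the increment of the martingale part $M(\phi_k)$ of $\langle\phi_k,\mu_\cdot\rangle$. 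Summing, the drift$\times$drift and drift$\times$martingale contributions vanish, while the martingale$\times$martingale contribution is the realized covariation $\sum_i\big(M(\phi_k)_{t_{i+1}}-M(\phi_k)_{t_i}\big)\big(M(\phi_l)_{t_{i+1}}-M(\phi_l)_{t_i}\big)\to\langle M(\phi_k),M(\phi_l)\rangle_t=\int_s^t\int_{\mathbb{R}}\gamma\,\phi_k\phi_l\,\mu_u(dx)\,du$. Because $\mathcal{M}$ is orthogonal this predictable covariation lives on the spatial diagonal, so a separable kernel $K$ yields exactly $\tfrac12\int_s^t\int_{\mathbb{R}}\gamma\,K(x,x)\,\mu_u(dx)\,du$.

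To pass from separable kernels to the true integrand I would localize: pick a smooth cut-off $\rho^N$ and restrict to a high-probability event $\{\sup_u\langle 1,\mu_u\rangle\le R\}$ (with a matching tail-mass bound), on which the $\mu_u$ stay in a compact subset of $M(\mathbb{R})$, so that by Stone--Weierstrass the continuous symmetric kernel $(x,y)\mapsto\frac{\delta^2F}{\delta\mu^2}(\nu,x,y)$ is approximable in supremum norm on $[-N-1,N+1]^2$ by symmetric sums of products $\phi_k(x)\phi_l(y)$, uniformly over $\nu$ in that set, with coefficients depending continuously on $\nu$ — this is the symmetric-polynomial approximation, and the continuous dependence lets me freeze $\mu^i_\theta$ to $\mu_{t_i}$ at the cost of a vanishing error once the realized covariations are known to be bounded. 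The step I expect to be the main obstacle is controlling the error of this sup-norm approximation: one \emph{cannot} bound $\iint E\,(\mu_{t_{i+1}}-\mu_{t_i})(\mu_{t_{i+1}}-\mu_{t_i})$ by $\|E\|_\infty$ times a total variation, since $\sum_i\|\mu_{t_{i+1}}-\mu_{t_i}\|^2_{TV}$ need not converge for these rough measure-valued paths. Instead I would prove a uniform second-moment bound $\mathbb{E}\big|\sum_i\iint E\,(\mu_{t_{i+1}}-\mu_{t_i})(\mu_{t_{i+1}}-\mu_{t_i})\big|\le C(R)\,\|E\|_\infty$, using the Itô isometry and fourth-moment estimates for the orthogonal martingale measure under the localization; sup-norm density then closes the gap, the Taylor weight $\int_0^1(1-\theta)\,d\theta=\tfrac12$ supplies the factor $\tfrac12$, and collapsing to the diagonal yields $\tfrac12\int_s^t\int_{\mathbb{R}}\gamma(x,\mu_u,\alpha_u(x))\frac{\delta^2F}{\delta\mu^2}(\mu_u,x,x)\,\mu_u(dx)\,du$. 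Sending the mesh to zero and then $K,N,R\to\infty$ through a diagonal $\varepsilon$-argument completes the proof.
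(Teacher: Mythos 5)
Your proposal follows essentially the same route as the paper's proof of Theorem \ref{theorem1}: telescope over a partition; apply a second-order Taylor expansion in the linear derivative; handle the first-order sum by freezing $\frac{\delta}{\delta\mu}F(\mu_{t_i},\cdot)$, substituting it into Equation \reff{equation1} (which, as you note, must first be extended from $C^\infty_b$ to $C^2_b$ test functions --- a point the paper glosses over), and passing to the limit via dominated convergence and the $L^2$-isometry; handle the second-order sum by localization plus a Stone--Weierstrass reduction to finite-rank symmetric kernels, for which the double integral factorizes into scalar martingale increments whose realized (co)variations converge to $\int_s^t\int_{\mathbb{R}}\gamma\,\phi_k\phi_l\,\mu_u(dx)du$. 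Your use of products $\phi_k(x)\phi_l(y)$ and covariations, versus the paper's diagonal representation $\sum_j\lambda_j(u)g_j(x)g_j(y)$ from Propositions \ref{proposition1}--\ref{proposition2} and quadratic variations, differs only by polarization. The one genuine divergence is the step you yourself flag as the main obstacle: controlling $\sum_i\iint E\,m_i(dx)m_i(dy)$ with $E=f-f_m$ and $m_i:=\mu_{t_{i+1}}-\mu_{t_i}$ in terms of $\|E\|_\infty$ alone.

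Here your proposed key estimate has a gap. The bound $\mathbb{E}\left|\sum_i\iint E\,m_i(dx)m_i(dy)\right|\le C(R)\|E\|_\infty$, claimed uniformly over partitions, cannot hold as stated: decomposing each $m_i$ by \reff{equation1} into drift and martingale parts, the drift$\times$drift and drift$\times$martingale contributions are controlled by quantities involving the first and second \emph{spatial derivatives} of $E$, which for a sup-norm approximation error $E=f-f_m$ are not small; those contributions vanish only in the mesh$\to0$ limit, not uniformly. The martingale$\times$martingale part does obey such a bound, but the ``It\^o isometry'' you need there is the isometry for \emph{iterated} (double) stochastic integrals with respect to $\mathcal{M}$ --- a naive fourth-moment expansion over a spatial grid degenerates into total-variation-type sums that do not close --- and once Walsh's double-integral product formula is available, you could apply it directly to the frozen kernel $\frac{\delta^2}{\delta\mu^2}F(\mu^N_{t_i},\cdot,\cdot)$ itself, which would render your Stone--Weierstrass reduction unnecessary. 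So either restructure the limits (fix $m$, send the mesh to $0$ so the drift contributions die and the martingale contribution concentrates on the diagonal term plus an $L^2$-fluctuation of size $\|E\|_\infty$, and only then send $m\to\infty$), or develop the double stochastic integral and dispense with separable kernels. For what it is worth, your suspicion about this step cuts against the paper as well: the paper dominates the same error pathwise by $\|f-f_m\|_\infty\sum_i\langle1,m_i\rangle^2$ on the strength of its assertion that $m_i(dx)m_i(dy)$ is a (positive) measure on $\mathbb{R}^2$, which is false for signed $m_i$ (take $m_i=\delta_1-\delta_0$ and symmetric $E$ with $E(0,0)=E(1,1)=1$, $E(0,1)=E(1,0)=-1$; then $\iint E\,m_i(dx)m_i(dy)=4$ while $\langle1,m_i\rangle^2=0$), so that step of the paper's proof needs repair along exactly the lines you are proposing.
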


It's useful to notice some lemmas and propositions before we start to prove Theorem \ref{theorem1}.

\begin{proposition}
        Given $F\in C^{2,2}_b(M(\mathbb{R}))$,  then $\frac{\delta^2}{\delta\mu^2}F(\mu,\cdot,\cdot)$is symmetric for any $\mu\in M(\mathbb{R})$, i.e.,  for any $x,y\in\mathbb{R}$, $\frac{\delta^2}{\delta\mu^2}F(\mu,x,y)=\frac{\delta^2}{\delta\mu^2}F(\mu,y,x)$.
\end{proposition}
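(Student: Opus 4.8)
The plan is to reduce the statement to the classical symmetry of mixed second partials of a scalar function of two real variables (Schwarz's theorem). Fix $\mu\in M(\mathbb{R})$ and two points $x,y\in\mathbb{R}$. Since adding nonnegative point masses keeps us inside $M(\mathbb{R})$, I introduce for $s,t\geq 0$ the perturbed measure $\mu_{s,t}:=\mu+s\delta_x+t\delta_y\in M(\mathbb{R})$ and the scalar function $\Phi(s,t):=F(\mu_{s,t})$, defined on the quadrant $[0,\infty)^2$. The whole argument then amounts to showing that $\partial_s\partial_t\Phi$ and $\partial_t\partial_s\Phi$ exist, are continuous, and agree at the corner $(0,0)$.

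First I would compute the first-order partials from Definition \ref{definition3}. Applying the defining formula with the increment $\mu_{s+h,t}-\mu_{s,t}=h\delta_x$ gives
\[
\Phi(s+h,t)-\Phi(s,t)=h\int_0^1 \frac{\delta F}{\delta\mu}(\mu_{s,t}+rh\delta_x,x)\,dr,
\]
so dividing by $h$ and using the continuity of $\frac{\delta F}{\delta\mu}$ in its measure argument yields $\partial_s\Phi(s,t)=\frac{\delta F}{\delta\mu}(\mu_{s,t},x)$, and symmetrically $\partial_t\Phi(s,t)=\frac{\delta F}{\delta\mu}(\mu_{s,t},y)$. Differentiating once more in the same way, now invoking the definition of the second linear derivative (the linear derivative of $\frac{\delta F}{\delta\mu}(\cdot,x)$) against the increment $h\delta_y$, I obtain
\[
\partial_t\partial_s\Phi(s,t)=\frac{\delta^2}{\delta\mu^2}F(\mu_{s,t},x,y),\qquad \partial_s\partial_t\Phi(s,t)=\frac{\delta^2}{\delta\mu^2}F(\mu_{s,t},y,x).
\]
Because $\mu_{s,t}\to\mu_{s_0,t_0}$ weakly as $(s,t)\to(s_0,t_0)$ and $\frac{\delta^2}{\delta\mu^2}F$ is continuous for the product topology (part of the $C^{2,2}_b$ hypothesis), both mixed partials are continuous on $[0,\infty)^2$, including at the corner.

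It remains to equate the two mixed partials at $(0,0)$. Since $(0,0)$ is a corner of the domain rather than an interior point, I cannot quote Schwarz's theorem verbatim; instead I would run its mean-value-theorem proof using only points of the quadrant. For $s,t>0$ set $\Delta(s,t):=\Phi(s,t)-\Phi(s,0)-\Phi(0,t)+\Phi(0,0)$; two applications of the mean value theorem, carried out in each order of the variables, give $\Delta(s,t)=st\,\partial_t\partial_s\Phi(\xi,\eta)=st\,\partial_s\partial_t\Phi(\xi',\eta')$ for some interior points $(\xi,\eta),(\xi',\eta')\in(0,s)\times(0,t)$. Letting $(s,t)\downarrow(0,0)$ and invoking the continuity established above forces $\frac{\delta^2}{\delta\mu^2}F(\mu,x,y)=\frac{\delta^2}{\delta\mu^2}F(\mu,y,x)$. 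As $x,y\in\mathbb{R}$ and $\mu\in M(\mathbb{R})$ were arbitrary, this is exactly the claimed symmetry.

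The main obstacle I anticipate is precisely this boundary issue: the natural perturbation $\mu+s\delta_x+t\delta_y$ only lives in $M(\mathbb{R})$ for $s,t\geq 0$, so the exchange of the two differentiations must be justified with one-sided derivatives at the corner rather than by citing the interior version of Clairaut's theorem. This is where the joint continuity of $\frac{\delta^2}{\delta\mu^2}F$ in the product topology is essential, and it is the reason the hypothesis must be $C^{2,2}_b$ rather than merely twice measure-differentiable. A minor point to record along the way is that Dirac masses are admissible increments here, which is what lets the double integral against $\delta_x\otimes\delta_y$ collapse to the pointwise value $\frac{\delta^2}{\delta\mu^2}F(\mu,x,y)$ and thereby deliver pointwise, rather than merely integrated, symmetry.
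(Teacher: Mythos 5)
Your proof is correct and takes essentially the same route as the paper: perturb $\mu$ in two directions, form a scalar function of two real parameters, and invoke the symmetry of mixed second partials (the paper writes $f(s,t)=F(\mu+s\mu_1+t\mu_2)$ for arbitrary $\mu_1,\mu_2\in M(\mathbb{R})$ and only specializes to $\mu_1=\delta_x$, $\mu_2=\delta_y$ at the end, whereas you take Dirac increments from the start). Your version is in fact more careful on one point the paper glosses over: since $M(\mathbb{R})$ contains only nonnegative measures, the function is defined only on the quadrant $s,t\ge 0$, so Clairaut's theorem cannot be quoted verbatim at the corner $(0,0)$; your one-sided mean-value-theorem argument combined with the continuity of $\frac{\delta^2}{\delta\mu^2}F$ closes exactly that gap, which the paper's appeal to ``$f\in C^{2,2}$'' leaves implicit.
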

\begin{proof}
        Considering $f(s,t)=F(\mu+s\mu_1+t\mu_2)$ for arbitrary $\mu_1,\mu_2\in M(\mathbb{R})$. We have
        \begin{equation}
            \frac{\partial^2}{\partial s\partial t}f(0,0)=\int_{\mathbb{R}^2}\frac{\delta^2}{\delta \mu^2}F(\mu,x,y)\mu_2(x)\mu_1(dy)
        \end{equation}
        and 
        \begin{equation}
            \frac{\partial^2}{\partial t\partial s}f(0,0)=\int_{\mathbb{R}^2}\frac{\delta^2}{\delta \mu^2}F(\mu,x,y)\mu_1(x)\mu_2(dy).
        \end{equation}
        Since $f\in C^{2,2}$, we have 
        \begin{equation}
             \frac{\partial^2}{\partial s\partial t}f(0,0)=\frac{\partial^2}{\partial t\partial s}f(0,0).
        \end{equation}
        Let $\mu_1=\delta_x$, $\mu_2=\delta_y$, we obtain that $\frac{\delta^2}{\delta\mu^2}F(\mu,\cdot,\cdot)$is symmetric.
\end{proof}

\begin{lemma}[Stone-Weierstrass]\label{lemma1}
    Let $X$ be a compact Hausdorff space. Let $C(X)$ be the continuous real-valued functions on $X$ with the supremum norm. Let $\mathscr{A}$ be a subalgebra of $C(X)$ such that:
    \begin{itemize}
        \item for any $x\in X$, there is some $f\in \mathscr{A}$ such that $f(x)\neq 0$;
        \item if $x,y\in X$ and $x\neq y$, then there is an $f$ in $\mathscr{A}$ such that $f(x)\neq f(y)$;
    \end{itemize}
        then $\mathscr{A}$ is dense in $C(X)$.
\end{lemma}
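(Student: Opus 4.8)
The plan is to follow the classical lattice-theoretic route to Stone--Weierstrass. First I would pass to the uniform closure $\mathscr{B}=\overline{\mathscr{A}}$, which is again a subalgebra of $C(X)$ (the algebra operations are continuous for the supremum norm), still separates points, and still vanishes nowhere; proving $\mathscr{B}=C(X)$ is equivalent to the density of $\mathscr{A}$. The strategy is then to show that $\mathscr{B}$ is closed under the pointwise operations $\max$ and $\min$, that $\mathscr{B}$ can match prescribed values at any two distinct points, and finally to glue local approximants together using the compactness of $X$.

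The key algebraic input is the lattice property. Given $f\in\mathscr{B}$ with $M=\|f\|_\infty$, I would invoke the scalar Weierstrass theorem to approximate $|t|$ uniformly on $[-M,M]$ by polynomials $p_n$, and then replace $p_n$ by $q_n(t)=p_n(t)-p_n(0)$, which still converges uniformly to $|t|$ (since $p_n(0)\to 0$) but now has zero constant term. Because $q_n$ has no constant term, $q_n(f)$ is a polynomial in $f$ with no constant and hence lies in the algebra $\mathscr{B}$; letting $n\to\infty$ and using closedness gives $|f|\in\mathscr{B}$. The identities $\max(f,g)=\tfrac12(f+g+|f-g|)$ and $\min(f,g)=\tfrac12(f+g-|f-g|)$ then show $\mathscr{B}$ is a sublattice. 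Separately, for any distinct $x,y\in X$ and reals $a,b$, I would produce $h\in\mathscr{B}$ with $h(x)=a$ and $h(y)=b$: the evaluation map $f\mapsto(f(x),f(y))$ sends $\mathscr{B}$ onto a subalgebra of $\mathbb{R}^2$, and since the subalgebras of $\mathbb{R}^2$ are only $\{0\}$, the two coordinate axes, the diagonal, and all of $\mathbb{R}^2$, the separation hypothesis (killing the diagonal) together with nonvanishing at $x$ and at $y$ (killing each axis and $\{0\}$) forces the image to be all of $\mathbb{R}^2$.

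With these two ingredients the approximation is routine. Fix $f\in C(X)$ and $\varepsilon>0$. For each pair $x,y$ choose $h_{x,y}\in\mathscr{B}$ agreeing with $f$ at both points. Fixing $x$ and covering $X$ by the open sets $\{h_{x,y}<f+\varepsilon\}$ (each containing its $y$), compactness extracts a finite subcover whose pointwise minimum $h_x\in\mathscr{B}$ satisfies $h_x<f+\varepsilon$ everywhere and $h_x(x)=f(x)$. Covering $X$ by the open sets $\{h_x>f-\varepsilon\}$ and taking a finite subcover, the pointwise maximum $g\in\mathscr{B}$ satisfies $f-\varepsilon<g<f+\varepsilon$, i.e. $\|f-g\|_\infty\le\varepsilon$. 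Since $\varepsilon$ is arbitrary and $\mathscr{B}$ is closed, $f\in\mathscr{B}$, whence $\mathscr{B}=C(X)$.

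I expect the only genuinely analytic obstacle to be the polynomial approximation of $|t|$ underlying the lattice property; everything else is elementary algebra, the classification of subalgebras of $\mathbb{R}^2$, and a double application of compactness. A secondary point demanding care is the possible absence of a unit in $\mathscr{A}$: this is precisely why I arrange the approximating polynomials $q_n$ to have zero constant term, so that $q_n(f)$ remains inside the algebra even when constant functions are not available.
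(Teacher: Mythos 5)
Your proof is correct and complete. The paper does not actually prove this lemma itself --- it simply cites Rudin's \emph{Principles of Mathematical Analysis} --- and your argument is essentially the standard proof found in that reference: pass to the uniform closure, establish the lattice property via polynomial approximation of $|t|$ with zero constant term (which correctly handles the possible absence of a unit, the one subtle point in the non-unital setting), prove two-point interpolation, and conclude with the double compactness covering; the only cosmetic difference is that you obtain interpolation by classifying the subalgebras of $\mathbb{R}^2$, whereas Rudin constructs the interpolating function explicitly from a separating function and two non-vanishing functions, an equivalent variant.
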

The proof of Lemma \ref{lemma1} can be found in \cite{bib17}.

\begin{proposition}\label{proposition1}
    Let $K$ be a symmetric compact subset of $\mathbb{R}^2$. Suppose $f:[0,T]\times K\to\mathbb{R}$ is continuous, and $f(u,\cdot,\cdot)$ is symmetric for any $u\in [0,T]$. Then there exists a sequence of polynomials $f_n:[0,T]\times\mathbb{R}\times\mathbb{R}\to\mathbb{R}$ such that $f_n(u,\cdot,\cdot)$ is symmetric for any $u\in [0,T]$ and $f_n$ converges to $f$ uniformly in $[0,T]\times K$.
\end{proposition}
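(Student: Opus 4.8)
The plan is to first apply the Stone--Weierstrass theorem (Lemma \ref{lemma1}) to obtain an unconstrained polynomial approximation of $f$, and then to enforce the symmetry constraint by an averaging (symmetrization) argument that exploits both the symmetry of $K$ and that of $f$.

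First I would observe that $[0,T]\times K$ is a compact subset of $\mathbb{R}^3$, hence a compact Hausdorff space. Let $\mathscr{A}$ denote the algebra of all real polynomials in the three variables $(u,x,y)$, restricted to $[0,T]\times K$. Then $\mathscr{A}$ is a subalgebra of $C([0,T]\times K)$ that contains the nonzero constant functions (so it vanishes at no point) and separates points (the coordinate functions $u$, $x$, $y$ already do). By Lemma \ref{lemma1}, $\mathscr{A}$ is dense in $C([0,T]\times K)$, so there is a sequence of polynomials $p_n(u,x,y)$ with
$$\sup_{(u,x,y)\in [0,T]\times K}|p_n(u,x,y)-f(u,x,y)|\longrightarrow 0.$$

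Next I would symmetrize. Define
$$f_n(u,x,y):=\tfrac{1}{2}\big(p_n(u,x,y)+p_n(u,y,x)\big).$$
Each $f_n$ is a polynomial on $[0,T]\times\mathbb{R}\times\mathbb{R}$, and by construction $f_n(u,x,y)=f_n(u,y,x)$, so $f_n(u,\cdot,\cdot)$ is symmetric for every $u$. It remains to check uniform convergence on $[0,T]\times K$. Using the symmetry of $f$, write $f(u,x,y)=\tfrac{1}{2}\big(f(u,x,y)+f(u,y,x)\big)$, whence
$$f_n(u,x,y)-f(u,x,y)=\tfrac{1}{2}\big(p_n(u,x,y)-f(u,x,y)\big)+\tfrac{1}{2}\big(p_n(u,y,x)-f(u,y,x)\big),$$
and by the triangle inequality
$$|f_n(u,x,y)-f(u,x,y)|\leq \tfrac{1}{2}|p_n(u,x,y)-f(u,x,y)|+\tfrac{1}{2}|p_n(u,y,x)-f(u,y,x)|.$$

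The one hypothesis that must be used carefully here is that $K$ is symmetric: when $(x,y)\in K$ we also have $(y,x)\in K$, so both $(u,x,y)$ and $(u,y,x)$ lie in $[0,T]\times K$, and each of the two terms on the right is therefore bounded by $\sup_{[0,T]\times K}|p_n-f|$. Hence
$$\sup_{[0,T]\times K}|f_n-f|\leq \sup_{[0,T]\times K}|p_n-f|\longrightarrow 0,$$
which proves the claim. I expect the only mild subtlety to be precisely this final step: symmetrization automatically yields a polynomial that is symmetric in $(x,y)$, but without the assumption that $K$ is symmetric one could not control the term $p_n(u,y,x)-f(u,y,x)$ uniformly over $[0,T]\times K$. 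Thus the symmetry of $K$, together with that of $f$, is exactly what renders the averaging harmless and keeps the error within the original approximation bound.
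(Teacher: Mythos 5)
Your proof is correct and follows essentially the same route as the paper's: apply Stone--Weierstrass to get polynomials $g_n\to f$ uniformly on $[0,T]\times K$, then symmetrize via $f_n(u,x,y)=\tfrac{1}{2}[g_n(u,x,y)+g_n(u,y,x)]$. In fact you make explicit the one point the paper leaves implicit, namely that the symmetry of $K$ (and of $f$) is what guarantees the averaging does not degrade the uniform error bound.
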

\begin{proof}
    Given any $f$ satisfying the conditions in Proposition \ref{proposition1}, let $\mathscr{A}$ be the set of polynomials. Applying Lemma \ref{lemma1}, we obtain
    that there exists a sequence of polynomials $g_n$ converges to $f$ uniformly in $[0,T]\times K$. Let $f_n(t,x,y)=\frac{1}{2} [g_n(t,x,y)+g_n(t,y,x)]$. Then $f_n$ is a sequence of symmetric polynomials and converges to $f$ uniformly in $[0,T]\times K$.
\end{proof}
\begin{proposition}\label{proposition2}
    Let $f:[0,T]\times\mathbb{R}\times\mathbb{R}\to\mathbb{R}$ be a polynomial such that $f(u,\cdot,\cdot)$ is symmetric for any $u\in [0,T]$. Then there exists some $m\in\mathbb{N}$ and polynomials $\lambda_j: [0,T]\to\mathbb{R}$ and $g_j:\mathbb{R}\to\mathbb{R}$, $j=1,2, \cdots,  m$, such that $$f(u,x,y)=\sum_{j=1}^m \lambda_j (u)g_j(x)g_j(y).$$
\end{proposition}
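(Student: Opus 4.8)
The plan is to reduce the three-variable problem to a purely two-variable one by first separating out the dependence on $u$, and then to express an arbitrary symmetric two-variable polynomial as a finite combination of ``diagonal'' products $g(x)g(y)$ by means of a polarization identity.

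First I would write $f$ in its monomial expansion and collect the terms according to the power of $u$, so that
$$f(u,x,y)=\sum_{i} u^i\, p_i(x,y),$$
where each $p_i$ is a polynomial in $(x,y)$ and the sum is finite because $f$ is a polynomial. Since the identity $f(u,x,y)=f(u,y,x)$ holds for every $u\in[0,T]$, and $[0,T]$ contains infinitely many points, comparing the coefficients of $u^i$ (two polynomials in $u$ agreeing at infinitely many points coincide) shows that each $p_i$ must itself be symmetric: $p_i(x,y)=p_i(y,x)$. This isolates the essential content of the statement into the claim that every symmetric two-variable polynomial decomposes as a finite sum $\sum_j \mu_j\, g_j(x)g_j(y)$ with scalar coefficients $\mu_j$ and one-variable polynomials $g_j$.

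The key step is to handle the off-diagonal symmetric monomial pairs. A symmetric polynomial $p(x,y)$ is a linear combination of the diagonal monomials $x^k y^k$ and the symmetrized pairs $x^k y^l + x^l y^k$ with $k\neq l$. The diagonal terms already have the required shape, since $x^k y^k = g(x)g(y)$ with $g(t)=t^k$. For the off-diagonal pairs I would use the polarization identity
$$x^k y^l + x^l y^k=\tfrac12\big[(x^k+x^l)(y^k+y^l)-(x^k-x^l)(y^k-y^l)\big],$$
which expresses each such pair as $\tfrac12\, g_1(x)g_1(y)-\tfrac12\, g_2(x)g_2(y)$ with the real one-variable polynomials $g_1(t)=t^k+t^l$ and $g_2(t)=t^k-t^l$. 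Thus every symmetric $p_i$ admits the desired diagonal decomposition with real factors.

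Finally I would reassemble the $u$-dependence: substituting the decomposition of each $p_i$ into $f(u,x,y)=\sum_i u^i\, p_i(x,y)$ and absorbing the monomial $u^i$ together with the scalar coefficient into a single polynomial $\lambda_j(u)$, then relabelling the finitely many resulting terms by one index $j=1,\dots,m$, yields exactly $f(u,x,y)=\sum_{j=1}^m \lambda_j(u)\, g_j(x)\, g_j(y)$. I do not anticipate a genuine obstacle here; the only point requiring care is the polarization identity, which is what converts a symmetric bilinear expression into a difference of ``square'' products of a single function, together with the bookkeeping that keeps all factors real and the index set finite.
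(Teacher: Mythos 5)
Your proof is correct, and it takes a genuinely different route from the paper's argument for this proposition. The paper proceeds by induction on the degree of $f$: it observes that the top pure powers $x^n$ and $y^n$ share the same coefficient, absorbs that pair through the identity $x^n+y^n=(x^n+1)(y^n+1)-x^ny^n-1$, factors $xy$ out of the asserted remainder to lower the degree, and invokes the inductive hypothesis on the quotient. You avoid induction altogether: after separating the $u$-dependence (a harmless but dispensable step --- one could equally keep all coefficients in $\mathbb{R}[u]$ throughout, as the paper does), you decompose a symmetric polynomial into diagonal monomials $x^ky^k$ and symmetrized pairs $x^ky^l+x^ly^k$, and polarize each pair via $x^ky^l+x^ly^k=\tfrac12\big[(x^k+x^l)(y^k+y^l)-(x^k-x^l)(y^k-y^l)\big]$. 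The two key identities belong to the same family --- the paper's is the additive polarization $ab'+a'b=(a+a')(b+b')-ab-a'b'$ specialized to $a=x^n$, $b=y^n$, $a'=b'=1$ --- but your term-by-term use of it is cleaner: it sidesteps the delicate bookkeeping in the paper's inductive step (lower-order pure-power pairs $x^k+y^k$ with $k<n$ are not divisible by $xy$, so the remainder does not have the form the paper asserts unless all such pairs are extracted and polarized, not just the top one), and it generalizes verbatim to $\mathbb{R}^d$; indeed, the paper's own proof of the $d$-dimensional version, stated later as a separate proposition, abandons the induction and uses exactly this term-by-term polarization with the additive identity.
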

\begin{proof}
    It's obvious that this proposition holds true for the polynomials of degree 0. 
    Suppose Proposition \ref{proposition2} holds true for all the polynomials whose degree is less than n. For any $f$ of degree n, we have $x^n$ and $y^n$ have the same coefficient $a$. Since $x^n+y^n=(x^n+1)(y^n+1)-x^n y^n-1$, we have 
    $$f(u,x,y)=a[(x^n+1)(y^n+1)-x^n y^n-1]+xy[\tilde{f}(u,x,y)+\lambda(u)]$$, where $\tilde{f}$ is a polynomial of degree less than $n$ and $\lambda(u)$ is some polynomial of $u$. Applying the proposition for $\tilde{f}$, we have that the proposition holds true for $f$. So we finish our proof by induction.
\end{proof}

\subsection{proof of Theorem \ref{theorem1}}
\begin{proof}
We will first proof that for any $N\in\mathbb{N}$
\begin{equation}\label{equation3}
\begin{aligned}
    F(\mu^N_t)-F(\mu^N_s)=\int_s^t \int_\mathbb{R} L\frac{\delta}{\delta \mu} F(
        \mu^N_u,\cdot)(x,\mu_u,\alpha_u)\mu^N_u(dx)du\\
        +\frac{1}{2}\int_s^t\int_\mathbb{R} \gamma(x,\mu_u,\alpha_u) \frac{\delta^2}{\delta \mu^2}F(\mu^N_u,x,x)\mu^N_u(dx)du\\
        +\int_s^t\int_\mathbb{R}\sqrt{\gamma(x,\mu_u,\alpha_u(x))}\frac{\delta}{\delta \mu}F(\mu^N_u,x)\rho^N(x)\mathcal{M}(dx,du).
\end{aligned}
\end{equation}

Consider an increasing sequence $s=t^n_0\leq\cdots\leq t^n_{p_n}=t$ of subdivisions of $[s,t]$ whose mesh tends to $0$. We use $m^{N,n}_i$ (resp. $m^n_i$) to denote $\mu^N_{t^n_{i+1}}-\mu^N_{t^n_i}$ (resp. $\mu_{t^n_{i+1}}-\mu_{t^n_i}$). Then we have
\begin{equation}
    \begin{aligned}
        &F(\mu^N_t)-F(\mu^N_s)\\
        =& \sum_{i=1}^{p_n} [F(\mu^N_{t^n_{i+1}})-F(\mu^N_{t^n_{i}})]\\
        =&\sum_{i=1}^{p_n}[\int_\mathbb{R}\frac{\delta}{\delta\mu}F(\mu^N_{t^n_i},x)m^{N,n}_i(dx)\\
        +&\frac{1}{2}\int_{\mathbb{R}^2} \frac{\delta^2}{\delta\mu^2}F(\mu^N_{t^n_i}+\theta^n_i m^{N,n}_i,x,y)m^{N,n}_i(dx)m^{N,n}_i(dy)]\\
        =&\sum_{i=1}^{p_n}[\int_\mathbb{R}\frac{\delta}{\delta\mu}F(\mu^N_{t^n_i},x)\rho^N(x)m^n_i(dx)\\
        +&\frac{1}{2}\int_{\mathbb{R}^2} \frac{\delta^2}{\delta\mu^2}F(\mu^N_{t^n_i}+\theta^n_i m^{N,n}_i,x,y)\rho^N(x)\rho^N(y)m^n_i(dx)m^n_i(dy)]
    \end{aligned}
\end{equation}
, where $\theta^n_i\in [0,1]$. 

We are going to prove that
\begin{equation}\label{equation13}
    \begin{aligned}
        &\lim_{n\to\infty}\sum_{i=1}^{p_n}[\int_\mathbb{R}\frac{\delta}{\delta\mu}F(\mu^N_{t^n_i},x)\rho^N(x)m^n_i(dx)\\
        =&\int_s^t \int_\mathbb{R} L\frac{\delta}{\delta \mu} F(
        \mu^N_u,\cdot)(x,\mu_u,\alpha_u)\mu^N_u(dx)du\\
        +&\int_s^t\int_\mathbb{R}\sqrt{\gamma(x,\mu_u,\alpha_u(x))}\frac{\delta}{\delta \mu}F(\mu^N_u,x)\rho^N(x)\mathcal{M}(dx,du)
    \end{aligned}
\end{equation}
and that
\begin{equation}\label{equation14}
    \begin{aligned}
        &\lim_{n\to\infty}\sum_{i=1}^{p_n}\frac{1}{2}\int_{\mathbb{R}^2} \frac{\delta^2}{\delta\mu^2}F(\mu^N_{t^n_i}+\theta^n_i m^{N,n}_i,x,y)\rho^N(x)\rho^N(y)m^n_i(dx)m^n_i(dy)\\
        =&\frac{1}{2}\int_s^t\int_\mathbb{R} \gamma(x,\mu_u,\alpha_u) \frac{\delta^2}{\delta \mu^2}F(\mu^N_u,x,x)\mu^N_u(dx)du.
    \end{aligned}
\end{equation}
Both limits in Equation \ref{equation13} and \ref{equation14} are in the sense of probability.

Denote $\phi^n(u)=t^n_i, u\in [t^n_i,t^n_{i+1})$. Applying Equation \ref{equation1} to $\frac{\delta}{\delta\mu}F(\mu^N_{t^n_i},\cdot)\rho^N(\cdot)$, we have 
\begin{equation}\label{equation15}
\begin{aligned}
    &\sum_{i=1}^{p_n}\int_\mathbb{R}\frac{\delta}{\delta\mu}F(\mu^N_{t^n_i},x)\rho^N(x)m^n_i(dx)\\
    =&\sum_{i=1}^{p_n}[\int_{t^n_i}^{t^n_{i+1}}\int_\mathbb{R}L\frac{\delta}{\delta \mu} F(\mu^N_{t_i},\cdot)(x,\mu_u,\alpha_u)\mu^N_u(dx)du\\
    +&\int_{t^n_i}^{t^n_{i+1}}\int_\mathbb{R}\sqrt{\gamma(x,\mu_u,\alpha_u(x))}\frac{\delta}{\delta \mu}F(\mu^N_{t^n_i},x)\rho^N(x)\mathcal{M}(dx,du)]\\
    =&\int_s^t\int_\mathbb{R}L\frac{\delta}{\delta\mu}F(\mu^N_{\phi^n(u)},\cdot)(x,\mu_u,\alpha_u)\mu^N_u(dx)du\\
    +&\int_s^t\int_\mathbb{R}\sqrt{\gamma(x,\mu_u,\alpha_u(x))}\frac{\delta}{\delta \mu}F(\mu^N_{\phi^n(u)},x)\rho^N(x)\mathcal{M}(dx,du).
\end{aligned}
\end{equation}
Since $F\in C^{2,2}_b(M(\mathbb{R}))$ and all the coefficients are bounded, we can use dominated convergence theorem and get
\begin{equation}\label{equation16}
    \begin{aligned}
        \lim_{n\to\infty}\int_s^t\int_\mathbb{R}L\frac{\delta}{\delta\mu}F(\mu^N_{\phi^n(u)},\cdot)(x,\mu_u,\alpha_u)\mu^N_u(dx)du\\
        =\int_s^t\int_\mathbb{R}L\frac{\delta}{\delta\mu}F(\mu^N_{u},\cdot)(x,\mu_u,\alpha_u)\mu^N_u(dx)du\\
    \end{aligned}
\end{equation}
for almost all $\omega$. Notice that 
$$E[\int_s^t\int_\mathbb{R} 1 \mu_u(dx)du]=(t-s)\mu_0(\mathbb{R})<\infty.$$
Then use the bounds of the coefficients and the linear functional derivative, we can apply the property of martingale measures and the dominated convergence theorem then get 
\begin{equation}
     \begin{aligned}
        & \lim_{n\to\infty}E[\int_s^t\int_\mathbb{R}\sqrt{\gamma(x,\mu_u,\alpha_u(x))}\frac{\delta}{\delta \mu}F(\mu^N_{\phi^n(u)},x)\rho^N(x)\mathcal{M}(dx,du)\\
        -& \int_s^t\int_\mathbb{R}\sqrt{\gamma(x,\mu_u,\alpha_u(x))}\frac{\delta}{\delta \mu}F(\mu^N_u,x)\rho^N(x)\mathcal{M}(dx,du)]^2\\
       =& \lim_{n\to\infty}E\{\int_s^t\int_{\mathbb{R}}[\sqrt{\gamma(x,\mu_u,\alpha_u(x))}\frac{\delta}{\delta \mu}F(\mu^N_{\phi^n(u)},x)\rho^N(x)\\
       -& \sqrt{\gamma(x,\mu_u,\alpha_u(x))}\frac{\delta}{\delta \mu}F(\mu^N_u,x)]^2\mu^N_u(dx)du\}\\
       =& 0.
    \end{aligned}
\end{equation}
So we have
\begin{equation}\label{equation17}
\begin{aligned}
    \lim_{n\to\infty}\int_s^t\int_\mathbb{R}\sqrt{\gamma(x,\mu_u,\alpha_u(x))}\frac{\delta}{\delta \mu}F(\mu^N_{\phi^n(u)},x)\rho^N(x)\mathcal{M}(dx,du)\\
    =\int_s^t\int_\mathbb{R}\sqrt{\gamma(x,\mu_u,\alpha_u(x))}\frac{\delta}{\delta \mu}F(\mu^N_u,x)\rho^N(x)\mathcal{M}(dx,du)
\end{aligned}
\end{equation}
in probability.
Combining Equation \ref{equation15}, \ref{equation16} and \ref{equation17}, we obtain Equation \ref{equation13}. As a byproduct, we have the limit in the left hand side of Equation \ref{equation14} exists in the sense of probability, which is useful in the following proof.

Notice that
\begin{equation}\label{equation8}
    \begin{aligned}
        &\lim_{n\to\infty}\sum_{i=1}^{p_n}\int_{\mathbb{R}^2} 1 m^n_i(dx)m^n_i(dy)\\
        =&\lim_{n\to\infty}\sum_{i=1}^{p_n}[\int_{\mathbb{R}}1 m^n_i(dx)]^2\\
        =&\lim_{n\to\infty}\sum_{i=1}^{p_n}[\int_{t^n_i}^{t^n_{i+1}}\int_{\mathbb{R}}\sqrt{\gamma(x,\mu_u,\alpha_u(x))}\mathcal{M}(dx,du)]^2\\
        =&\int_s^t\int_{\mathbb{R}}\gamma(x,\mu_u,\alpha_u(x))\mu_u(dx)du
    \end{aligned}
\end{equation}
in probability.
Using the continuity of $\frac{\delta^2}{\delta\mu^2}F,\mu^N$, we have
\begin{equation}\label{equation9}
    \begin{aligned}
        \lim_{n\to\infty} \max_{1\leq i\leq p_n} \left| \frac{\delta^2}{\delta\mu^2}F(\mu^N_{t^n_i}+\theta^n_i m^{N,n}_i,x,y)-\frac{\delta^2}{\delta\mu^2}F(\mu^N_{t^n_i},x,y)\right|=0
    \end{aligned}
\end{equation}
for almost all $\omega$.

Given any $\varepsilon, \delta>0$, we can find some $1<C<\infty$ and $\Omega_1$ satisfying $P(\Omega_1)<\delta $ such that for each $\omega\in\Omega-\Omega_1$, 
\begin{equation}\label{equation10}
    \int_s^t\int_{\mathbb{R}}\gamma(x,\mu_u,\alpha_u(x))\mu_u(dx)du< C.
\end{equation}
And we can find some $n_1$ and $\Omega_2$ satisfying $P(\Omega_2)<\delta$ such that for any $n>n_1$ and $\omega\in\Omega-\Omega_2$ ,
\begin{equation}\label{equation11}
    \left| \sum_{i=1}^{p_n}\int_{\mathbb{R}^2} 1 m^n_i(dx) m^n_i (dy)-\int_s^t\int_{\mathbb{R}}\gamma(x,\mu_u,\alpha_u(x))\mu_u(dx)du\right|< \varepsilon
\end{equation}
Then we can find some $n_2$ and $\Omega_3$ satisfying $P(\Omega_3)<\delta$ such that
for any $n>n_2$ and $\omega\in\Omega-\Omega_3$,
\begin{equation}\label{equation12}
    \max_{1\leq i\leq p_n} \left| \frac{\delta^2}{\delta\mu^2}F(\mu^N_{t^n_i}+\theta^n_i m^{N,n}_i,x,y)-\frac{\delta^2}{\delta\mu^2}F(\mu^N_{t^n_i},x,y)\right|< \frac{\varepsilon}{C}.
\end{equation}
Note that although $m^n_i(dx)$ is a signal measure on $\mathbb{R}$, $m^n_i(dx)m^n_i(dy)$ is actually a measure on $\mathbb{R}^2$.
Combining Equation \ref{equation9}, \ref{equation11} and \ref{equation12}, we have
\begin{equation}
\begin{aligned}
     \lvert \sum_{i=1}^{p_n}\int_{\mathbb{R}^2} \frac{\delta^2}{\delta\mu^2}F(\mu^N_{t^n_i}+\theta^n_i m^{N,n}_i,x,y)\rho^N(x)\rho^N(y)  m^n_i(dx)m^n_i(dy)\\
     -\sum_{i=1}^{p_n}\int_{\mathbb{R}^2}\frac{\delta^2}{\delta\mu^2}F(\mu^N_{t^n_i},x,y)\rho^N(x)\rho^N(y)  m^n_i(dx)m^n_i(dy)\rvert < 2 \varepsilon
\end{aligned}
\end{equation}
for any $n>n_1\vee n_2$ and $\omega\in\Omega-(\Omega_1\cup\Omega_2\cup\Omega3)$.
In other word, we have 
\begin{equation}
    \begin{aligned}
        &\lim_{n\to\infty}\sum_{i=1}^{p_n}\int_{\mathbb{R}^2} \frac{\delta^2}{\delta\mu^2}F(\mu^N_{t^n_i}+\theta^n_i m^{N,n}_i,x,y)\rho^N(x)\rho^N(y)  m^n_i(dx)m^n_i(dy)\\
        =&\lim_{n\to\infty}\sum_{i=1}^{p_n}\int_{\mathbb{R}^2}\frac{\delta^2}{\delta\mu^2}F(\mu^N_{t^n_i},x,y)\rho^N(x)\rho^N(y)  m^n_i(dx)m^n_i(dy)
    \end{aligned}
\end{equation}
in the sense of probability.

Now fix some $\omega$, applying Proposition \ref{proposition1} and \ref{proposition2} to $$f(u,x,y)=\frac{\delta^2}{\delta\mu^2}F(\mu^N_{u},x,y)$$ 
on the compact set $[0,T]\times \mathcal{K}^N\times\mathcal{K}^N$, we obtain that there exists a sequence 
$$f_m(u,x,y)=\sum_{j=1}^{q_m}\lambda^m_j(u)g^m_j(x)g^m_j(y)$$
such that
\begin{equation}
    \lim_{m\to\infty} [f(u,x,y)-f_m(u,x,y)]=0
\end{equation}
uniformly in $u,x,y$. And we have
\begin{equation}
    \begin{aligned}
        &\lim_{n\to\infty}\sum_{i=1}^{p_n}\int_{\mathbb{R}^2}f_m(t^n_i,x,y)\rho^N(x)\rho^N(y)m^n_i(dx)m^n_i(dy)\\
        =&\lim_{n\to\infty}\sum_{i=1}^{p_n}\int_{\mathbb{R}^2}\sum_{j=1}^{q_m}\lambda^m_j(t^n_i)g^m_j(x)g^m_j(y)\rho^N(x)\rho^N(y)m^n_i(dx)m^n_i(dy)\\
        =&\lim_{n\to\infty}\sum_{i=1}^{p_n}\sum_{j=1}^{q_m}\lambda^m_j(t^n_i)\int_{\mathbb{R}^2}g^m_j(x)g^m_j(y)\rho^N(x)\rho^N(y)m^n_i(dx)m^n_i(dy)\\
        =&\lim_{n\to\infty}\sum_{i=1}^{p_n}\sum_{j=1}^{q_m}\lambda^m_j(t^n_i)[\int_{\mathbb{R}}g^m_j(x)\rho^N(x)m^n_i(dx)]^2\\
        =&\sum_{j=1}^{q_m}\lim_{n\to\infty}\sum_{i=1}^{p_n}\lambda^m_j(t^n_i)[\int_{t^n_i}^{t^n_{i+1}}\int_{\mathbb{R}}\sqrt{\gamma(x,\mu_u,\alpha_u(x))}g^m_j(x)\rho^N(x)\mathcal{M}(dx,du)]^2\\
        =&\sum_{j=1}^{q_m}\int_s^t\int_{\mathbb{R}}\gamma(x,\mu_u,\alpha_u(x))\lambda^m_j(u)[g^m_j(x)]^2\mu^N_u(dx)du\\
        =&\int_s^t\int_{\mathbb{R}}\gamma(x,\mu_u,\alpha_u(x))f_m(u,x,x)\mu^N_u(dx)du
    \end{aligned}
\end{equation}
in the sense of probability.

Given any $\varepsilon,\delta>0$,  we can find some $1<C<\infty$ and $\Omega_1$ satisfying $P(\Omega_1)<\delta $ such that for each $\omega\in\Omega-\Omega_1$, Equation \ref{equation10} is satisfied.
Then we can find some $m_2$ and $\Omega_2$ satisfying $P(\Omega_2)<\delta$ such that for any $\omega\in\Omega-\Omega_2$
\begin{equation}
    \left| f(u,x,y)-f_m(u,x,y) \right|<\frac{\varepsilon}{C}.
\end{equation}
Therefore we have
\begin{equation}
    \begin{aligned}
        &\lvert\lim_{n\to\infty}\sum_{i=1}^{p_n}\int_{\mathbb{R}^2}f(t^n_i,x,y)\rho^N(x)\rho^N(y)  m^n_i(dx)m^n_i(dy)\\
        -& \lim_{n\to\infty}\sum_{i=1}^{p_n}\int_{\mathbb{R}^2}f_m(t^n_i,x,y)\rho^N(x)\rho^N(y)m^n_i(dx)m^n_i(dy)\rvert\\
        < & \frac{\varepsilon}{C}\lim_{n\to\infty}\sum_{i=1}^{p_n}\int_{\mathbb{R}^2}1 m^n_i(dx)m^N_i(dy)\\
        < &\varepsilon
    \end{aligned}
\end{equation}
for any $m>m_1\vee m_2$ and $ \omega\in\Omega-(\Omega_1\cup\Omega_2)$, which gives that
\begin{equation}
    \begin{aligned}
        &\lim_{n\to\infty}\sum_{i=1}^{p_n}\int_{\mathbb{R}^2}f(t^n_i,x,y)\rho^N(x)\rho^N(y)  m^n_i(dx)m^n_i(dy)\\
        =&\lim_{m\to\infty}\lim_{n\to\infty}\sum_{i=1}^{p_n}\int_{\mathbb{R}^2}f_m(t^n_i,x,y)\rho^N(x)\rho^N(y)m^n_i(dx)m^n_i(dy)\\
        =&\lim_{m\to\infty}\int_s^t\int_{\mathbb{R}}\gamma(x,\mu_u,\alpha_u(x))f_m(u,x,x)\mu^N_u(dx)du\\
        =&\int_s^t\int_\mathbb{R} \gamma(x,\mu_u,\alpha_u) \frac{\delta^2}{\delta \mu^2}F(\mu^N_u,x,x)\mu^N_u(dx)du.
    \end{aligned}
\end{equation}

As \ref{equation3} is proved, we can get Theorem \ref{theorem1} by letting $N\to\infty$. Since $F$ is continuous, we have 
\begin{equation}\label{equation4}
    \lim_{N\to\infty}F(\mu^N_t)-F(\mu^N_s)=F(\mu_t)-F(\mu_s)
\end{equation}
for almost all $\omega$. Since $F\in C^{2,2}_b(M(\mathbb{R})) $ and $b, \sigma, \gamma$ are bounded, using dominated convergence theorem, we have
\begin{equation}\label{equation5}
    \begin{aligned}
        \lim_{N\to\infty}\{\int_s^t \int_\mathbb{R} L\frac{\delta}{\delta \mu} F(
        \mu^N_u,\cdot)(x,\mu_u,\alpha_u)\mu^N_u(dx)du\\
        +\frac{1}{2}\int_s^t\int_\mathbb{R} \gamma(x,\mu_u,\alpha_u) \frac{\delta^2}{\delta \mu^2}F(\mu^N_u,x,x)\mu^N_u(dx)du\}\\
        =\int_s^t \int_\mathbb{R} L\frac{\delta}{\delta \mu} F(
        \mu_u,\cdot)(x,\mu_u,\alpha_u)\mu_u(dx)du\\
        +\frac{1}{2}\int_s^t\int_\mathbb{R} \gamma(x,\mu_u,\alpha_u(x)) \frac{\delta^2}{\delta \mu^2}F(\mu_u,x,x)\mu_u(dx)du
    \end{aligned}
\end{equation}
for almost all $\omega$. As for the third part, we have
\begin{equation}\label{equation6}
    \begin{aligned}
        & \lim_{N\to\infty}E[\int_s^t\int_\mathbb{R}\sqrt{\gamma(x,\mu_u,\alpha_u(x))}\frac{\delta}{\delta \mu}F(\mu^N_u,x)\rho^N(x)\mathcal{M}(dx,du)\\
        -& \int_s^t\int_\mathbb{R}\sqrt{\gamma(x,\mu_u,\alpha_u(x))}\frac{\delta}{\delta \mu}F(\mu_u,x)\mathcal{M}(dx,du)]^2\\
       =& \lim_{N\to\infty}E\{\int_s^t\int_{\mathbb{R}}[\sqrt{\gamma(x,\mu_u,\alpha_u(x))}\frac{\delta}{\delta \mu}F(\mu^N_u,x)\rho^N(x)\\
       -& \sqrt{\gamma(x,\mu_u,\alpha_u(x))}\frac{\delta}{\delta \mu}F(\mu_u,x)]^2\mu_u(dx)du\}\\
       =& 0.
    \end{aligned}
\end{equation} 
So we have
\begin{equation}\label{equation7}
\begin{aligned}
    \lim_{N\to\infty}\int_s^t\int_\mathbb{R}\sqrt{\gamma(x,\mu_u,\alpha_u(x))}\frac{\delta}{\delta \mu}F(\mu^N_u,x)\rho^N(x)\mathcal{M}(dx,du)\\
    =\int_s^t\int_\mathbb{R}\sqrt{\gamma(x,\mu_u,\alpha_u(x))}\frac{\delta}{\delta \mu}F(\mu_u,x)\mathcal{M}(dx,du)
\end{aligned}
\end{equation}
in probability. Adding \ref{equation3}, \ref{equation4}, \ref{equation5} and \ref{equation7}, we have
\begin{equation}
    \begin{aligned}
        &F(\mu_t)-F(\mu_s)\\
        =&\lim_{N\to\infty}F(\mu^N_t)-F(\mu^N_s)\\
        =&\lim_{N\to\infty}\int_s^t \int_\mathbb{R} L\frac{\delta}{\delta \mu} F(
        \mu^N_u,\cdot)(x,\mu_u,\alpha_u)\mu^N_u(dx)du\\
        +&\frac{1}{2}\int_s^t\int_\mathbb{R} \gamma(x,\mu_u,\alpha_u) \frac{\delta^2}{\delta \mu^2}F(\mu^N_u,x,x)\mu^N_u(dx)du\\
        +&\int_s^t\int_\mathbb{R}\sqrt{\gamma(x,\mu_u,\alpha_u(x))}\frac{\delta}{\delta \mu}F(\mu^N_u,x)\rho^N(x)\mathcal{M}(dx,du)\\
        =&\int_s^t \int_\mathbb{R} L\frac{\delta}{\delta \mu} F(
        \mu_u,\cdot)(x,\mu_u,\alpha_u)\mu_u(dx)du\\
        +&\frac{1}{2}\int_s^t\int_\mathbb{R} \gamma(x,\mu_u,\alpha_u(x)) \frac{\delta^2}{\delta \mu^2}F(\mu_u,x,x)\mu_u(dx)du\\
        +&\int_s^t\int_\mathbb{R}\sqrt{\gamma(x,\mu_u,\alpha_u(x))}\frac{\delta}{\delta \mu}F(\mu_u,x)\mathcal{M}(dx,du),
    \end{aligned}
\end{equation}
    where all the limits are in the sense of probability.
\end{proof}

\subsection{Some corollaries}
From the proof of Theorem \ref{theorem1}, we can expect that some similar results holds true when $(\mu_t)_{t\leq 0}$ is not a controlled superprocess. More specifically, we don't require $(\mu_t)_{t\leq 0}$ to satisfy Equation \ref{equation1} any more, but require it to have some decomposition like the decomposition of the semi-martingales. 

\begin{assumption}\label{assumption2}
    Suppose $(\Omega, \mathscr{F}, \mathbb{P}, (\mathscr{F}_t)_{t\geq 0})$ is a filtered probability space. 
    $$(\mu_t, \lambda^1_t, \lambda^2_t):[0,T]\times \Omega\to (M(\mathbb{R}))^3$$  
    are measure-valued processes adapted to $(\mathscr{F}_t)_{t\geq 0}$.
    $$\mathcal{M}: \Omega \times [0,T] \times \mathscr{B}(\mathbb{R})\to \mathbb{R}$$ 
is a $(\mathscr{F}_t)_{t\geq 0}$ orthogonal martingale measure with density measure $\lambda^2_u(dx)du$. And we have
\begin{equation}
\begin{aligned}
    \left<f, \mu_t -\mu_s \right>=\int_s^t\int_\mathbb{R} L_1 f(x,\mu_u,\omega
)\mu_u(dx)du\\
    +\int_s^t L_2 f(x)\mathcal{M}(dx,du).
\end{aligned}
\end{equation}
    $L_1$ and $L_2$ are stochastic second order differential operators, i.e., they has the form such that
    $$L_i f(x, \mu_t, \omega)= \sum_{k=0}^2 b_{ik}(x, \mu_t, \omega) f^{(k)}(x), $$
    where $b_{ik}$ are bounded and Lipschitz parameters.
\end{assumption}

\begin{corollary}\label{corollary1}
    Under Assumption \ref{assumption2}, for any $F\in C^{2,2}_b(M(\mathbb{R}))$ and $0\leq s<t $, we have
    \begin{equation}
    \begin{aligned}
          F(\mu_t)-F(\mu_s)=\int_s^t \int_\mathbb{R} L_1\frac{\delta}{\delta \mu} F(
        \mu_u,\cdot)(x,\mu_u)\mu_u(dx)du\\
        +\frac{1}{2}\int_s^t\int_\mathbb{R}  G(\mu_u,x,x)\mu_u(dx)du\\
        +\int_s^t\int_\mathbb{R}L_2\frac{\delta}{\delta \mu}F(\mu_u, \cdot)(\mu_u,x)\mathcal{M}(dx,du),
    \end{aligned}
    \end{equation}
    where $G(\mu,x,y)=L_2(G_1(\mu,x,\cdot))(\mu,y)$, and $G_1(\mu, x, y)=L_2\frac{\delta}{\delta \mu}F(\mu, \cdot, y)(\mu,x)$.
\end{corollary}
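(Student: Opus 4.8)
The plan is to transport the architecture of the proof of Theorem \ref{theorem1} verbatim, replacing the concrete superprocess operators by the abstract $L_1,L_2$ of Assumption \ref{assumption2}. First I would localize: introduce the cut-off $\rho^N$, work with $\mu^N$, prove the analogue of \reff{equation3} on the compact window $\mathcal{K}^N$, and only at the end let $N\to\infty$. On a partition $s=t^n_0\le\cdots\le t^n_{p_n}=t$ of vanishing mesh, telescope $F(\mu^N_t)-F(\mu^N_s)=\sum_i[F(\mu^N_{t^n_{i+1}})-F(\mu^N_{t^n_i})]$ and apply the second-order Taylor expansion for the linear functional derivative (Definition \ref{definition3}), producing a first-order sum with kernel $\frac{\delta}{\delta\mu}F(\mu^N_{t^n_i},\cdot)$ and a second-order sum with kernel $\frac{\delta^2}{\delta\mu^2}F(\mu^N_{t^n_i}+\theta^n_i m^{N,n}_i,\cdot,\cdot)$, exactly as in the display preceding \reff{equation13}.

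For the first-order sum I would apply the decomposition of Assumption \ref{assumption2} to the test function $\frac{\delta}{\delta\mu}F(\mu^N_{t^n_i},\cdot)\rho^N(\cdot)$, which is admissible since $F\in C^{2,2}_b(M(\mathbb{R}))$ makes this function $C^2_b$ in space. This splits each increment into an $L_1$-drift and an $L_2$-martingale-measure integral. Passing to the limit by dominated convergence for the drift and by the $L^2$-isometry of the martingale-measure integral for the stochastic part — the estimates \reff{equation16} and \reff{equation17} go through unchanged because all $b_{ik}$ and the spatial derivatives of $\frac{\delta}{\delta\mu}F$ are bounded — yields the first and third terms of the asserted formula and, as in Theorem \ref{theorem1}, shows that the second-order sum converges in probability.

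The heart of the matter is the second-order sum. I would first replace $\mu^N_{t^n_i}+\theta^n_i m^{N,n}_i$ by $\mu^N_{t^n_i}$ using continuity of $\frac{\delta^2}{\delta\mu^2}F$ and boundedness in probability of the total mass $\sum_i\int_{\mathbb{R}^2}1\,m^n_i(dx)m^n_i(dy)$, mirroring \reff{equation9}--\reff{equation12}. Then, fixing $\omega$, invoke Propositions \ref{proposition1} and \ref{proposition2} to approximate the symmetric kernel $f(u,x,y)=\frac{\delta^2}{\delta\mu^2}F(\mu^N_u,x,y)$ by symmetric product-form polynomials $f_m(u,x,y)=\sum_{j=1}^{q_m}\lambda^m_j(u)g^m_j(x)g^m_j(y)$. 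Each such term factorizes as $\lambda^m_j(t^n_i)\,\langle g^m_j\rho^N,m^n_i\rangle^2$, and I expand $\langle g^m_j\rho^N,m^n_i\rangle$ by Assumption \ref{assumption2}. Summing the squares over $i$, the drift$\times$drift contribution is $O(\text{mesh})$ and the drift$\times$martingale cross term is $O(\sqrt{\text{mesh}})$ by Cauchy--Schwarz, so both vanish, while the martingale$\times$martingale part converges to the quadratic variation $\int_s^t\lambda^m_j(u)\int_{\mathbb{R}}[L_2(g^m_j\rho^N)(x)]^2\,\lambda^2_u(dx)\,du$. Summing in $j$ reconstructs $\int_s^t\int_{\mathbb{R}}G_m(\mu^N_u,x,x)\,\lambda^2_u(dx)\,du$, where $G_m$ is obtained from $f_m$ by the same double-$L_2$ operation $L_2^{(x)}L_2^{(y)}$ (restricted to the diagonal) that defines $G$; here the density measure $\lambda^2_u(dx)\,du$ of $\mathcal{M}$ appears, which in the superprocess case of Theorem \ref{theorem1} is $\mu_u(dx)\,du$.

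The main obstacle, and the genuinely new feature compared with Theorem \ref{theorem1}, is that $L_2$ is now a true second-order differential operator rather than multiplication by $\sqrt{\gamma}$, so its derivatives fall on the approximating $g^m_j$; consequently uniform ($C^0$) polynomial approximation no longer transfers to $G_m\to G$. I would therefore have to upgrade Proposition \ref{proposition1} to a symmetric $C^2$-approximation on $\mathcal{K}^N\times\mathcal{K}^N$ (polynomials are dense in $C^2(K\times K)$ for a box-like compact $K$, and the symmetrization $\tfrac12[g_n(x,y)+g_n(y,x)]$ preserves $C^2$-convergence), giving $G_m\to G$ uniformly so that the $m$- and $n$-limits may be interchanged as in the final display of Theorem \ref{theorem1}. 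Once the $\mu^N$-version is established, the concluding step $N\to\infty$ is routine: $\rho^N\to1$ and the cut-off-derivative corrections in $L_i(\,\cdot\,\rho^N)$ are supported near $\pm(N{+}1)$ and hence vanish against the finite measures $\mu_u,\lambda^2_u$, so continuity of $F$, dominated convergence, and the $L^2$-isometry (cf. \reff{equation4}--\reff{equation7}) deliver the stated formula.
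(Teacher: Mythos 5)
Your proposal is correct, and it follows exactly the route the paper intends: the paper states Corollary \ref{corollary1} with no proof of its own, presenting it as a transport of the argument for Theorem \ref{theorem1}, which is precisely what you carry out (localization by $\rho^N$, telescoping Taylor expansion, product-polynomial approximation of the second-order kernel, then $N\to\infty$). Two of your refinements go beyond what the paper records, and both are genuinely needed. First, you are right that the $C^0$ Stone--Weierstrass approximation of Proposition \ref{proposition1} no longer suffices: in Theorem \ref{theorem1} the operator inside the martingale integral is multiplication by $\sqrt{\gamma}$, so the approximants $g^m_j$ enter the quadratic-variation limit undifferentiated, whereas under Assumption \ref{assumption2} that limit involves $[L_2(g^m_j\rho^N)]^2$, i.e.\ the spatial derivatives of the approximants up to order two. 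Convergence of the product-form kernels to $G$ therefore requires approximating $\frac{\delta^2}{\delta\mu^2}F(\mu^N_u,\cdot,\cdot)$ together with its spatial derivatives; polynomials are indeed dense in this $C^2$-sense on compact boxes (mollification plus Weierstrass applied to the top derivatives, or Bernstein polynomials), symmetrization preserves such convergence, and Proposition \ref{proposition2} still applies to the symmetrized approximants --- but note this density statement needs a different proof than Lemma \ref{lemma1}, which only yields uniform approximation. Second, your isometry computation produces the second-order term against $\lambda^2_u(dx)\,du$, the density measure of $\mathcal{M}$, which is what the paper's own proposition on the density measure of $f\cdot\mathcal{M}$ gives; this coincides with $\mu_u(dx)\,du$ only in the superprocess setting of Theorem \ref{theorem1}, where the intensity of $\mathcal{M}$ is $\mu_u(dx)du$. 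The corollary as printed integrates $G(\mu_u,x,x)$ against $\mu_u(dx)$, so your derivation shows the printed statement is correct only under the additional identification $\lambda^2_u=\mu_u$; in other words, your proof silently corrects what appears to be a typo in the statement, and the version you prove (with $\lambda^2_u(dx)$ in the middle term) is the one that actually follows from Assumption \ref{assumption2}.
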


\begin{remark}
    We may expect some same results holds true when $L_i$ are not of second order or not even differential operators as long as $F$ are sufficiently smooth.
\end{remark}

    The corollary below extends Theorem \ref{theorem1} to time-dependent case.
\begin{corollary}\label{corollary2}
    Suppose $(\mu_t)_{t\in[0,T]}$ is a solution of Equation \ref{equation1}, for any $F\in C_b^{1,(2,2)}$, we have
     \begin{equation}
    \begin{aligned}
          F(t, \mu_t)-F(s, \mu_s) &=\int_s^t\frac{\partial}{\partial t}F(u,\mu_u)du  +\int_s^t \int_\mathbb{R} L\frac{\delta}{\delta \mu} F(u, 
        \mu_u,\cdot)(x,\mu_u,\alpha_u)\mu_u(dx)du\\
        &+\frac{1}{2}\int_s^t\int_\mathbb{R} \gamma(x,\mu_u,\alpha_u(x)) \frac{\delta^2}{\delta \mu^2}F(u, \mu_u,x,x)\mu_u(dx)du\\
        &+\int_s^t\int_\mathbb{R}\sqrt{\gamma(x,\mu_u,\alpha_u(x))}\frac{\delta}{\delta \mu}F(u, \mu_u,x)\mathcal{M}(dx,du).
    \end{aligned}
    \end{equation}
\end{corollary}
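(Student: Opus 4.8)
The plan is to reduce the time-dependent formula to the autonomous one of Theorem \ref{theorem1} by a telescoping argument in which each increment is split into a pure-time part and a pure-measure part, so that no joint time--measure Taylor expansion, and hence no cross term, is ever needed. First I would fix $\omega$ and an increasing sequence of subdivisions $s=t^n_0\leq\cdots\leq t^n_{p_n}=t$ of $[s,t]$ whose mesh tends to $0$, and write
\begin{equation*}
F(t,\mu_t)-F(s,\mu_s)=\sum_{i=1}^{p_n}\big[F(t^n_{i+1},\mu_{t^n_{i+1}})-F(t^n_i,\mu_{t^n_{i+1}})\big]+\sum_{i=1}^{p_n}\big[F(t^n_i,\mu_{t^n_{i+1}})-F(t^n_i,\mu_{t^n_i})\big].
\end{equation*}
The intermediate point $F(t^n_i,\mu_{t^n_{i+1}})$ is chosen so that the first sum isolates the time variation at the frozen measure $\mu_{t^n_{i+1}}$ and the second sum isolates the measure variation at the frozen time $t^n_i$.

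For the first sum, since $F\in C_b^{1,(2,2)}$ I may write $F(t^n_{i+1},\mu_{t^n_{i+1}})-F(t^n_i,\mu_{t^n_{i+1}})=\int_{t^n_i}^{t^n_{i+1}}\frac{\partial}{\partial t}F(v,\mu_{t^n_{i+1}})\,dv$. Because $(\mu_t)$ is a continuous measure-valued process and $\frac{\partial}{\partial t}F$ is bounded and jointly continuous, $\frac{\partial}{\partial t}F(v,\mu_{t^n_{i+1}})\to\frac{\partial}{\partial t}F(v,\mu_v)$ as the mesh shrinks; hence by dominated convergence the first sum converges, for almost all $\omega$, to $\int_s^t\frac{\partial}{\partial t}F(u,\mu_u)\,du$.

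For the second sum, I would repeat the argument of Theorem \ref{theorem1} verbatim, now carrying the frozen first argument $t^n_i$. Expanding in the measure variable gives the first-order term $\int_\mathbb{R}\frac{\delta}{\delta\mu}F(t^n_i,\mu_{t^n_i},x)m^n_i(dx)$ and a second-order remainder $\frac12\int_{\mathbb{R}^2}\frac{\delta^2}{\delta\mu^2}F(t^n_i,\mu_{t^n_i}+\theta^n_i m^{N,n}_i,x,y)m^n_i(dx)m^n_i(dy)$. Applying Equation \ref{equation1} to the test function $\frac{\delta}{\delta\mu}F(t^n_i,\mu_{t^n_i},\cdot)\rho^N(\cdot)$ and summing produces, with $\phi^n(u)=t^n_i$, drift and martingale-measure integrals whose integrands are $\frac{\delta}{\delta\mu}F(\phi^n(u),\mu_{\phi^n(u)},\cdot)$; joint continuity in $(t,\mu)$ gives $\phi^n(u)\to u$ and convergence to the stated drift and stochastic integrals, exactly as in Equations \ref{equation13}, \ref{equation16} and \ref{equation17}. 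For the second-order term I would again localize with $\rho^N$, replace the argument $\mu_{t^n_i}+\theta^n_i m^{N,n}_i$ by $\mu_{t^n_i}$ using uniform continuity, and apply Propositions \ref{proposition1} and \ref{proposition2} to $f(u,x,y)=\frac{\delta^2}{\delta\mu^2}F(u,\mu_u,x,y)$ on the compact set $[0,T]\times\mathcal{K}^N\times\mathcal{K}^N$. Here it is crucial that Proposition \ref{proposition1} already carries the auxiliary time variable $u\in[0,T]$, so the symmetric-polynomial approximation and the resulting quadratic-variation identity go through unchanged and yield $\frac12\int_s^t\int_\mathbb{R}\gamma(x,\mu_u,\alpha_u)\frac{\delta^2}{\delta\mu^2}F(u,\mu_u,x,x)\mu_u(dx)\,du$. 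Finally I would let $N\to\infty$ as in Equations \ref{equation4}--\ref{equation7}.

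The main obstacle, and the only place requiring genuine care, is the bookkeeping of the splitting: one must confirm that freezing time at the left endpoint $t^n_i$ in the measure-increment sum produces no residual error, and that the cross term $\partial_t\frac{\delta}{\delta\mu}F\,(dt)(d\mu)$ present in a naive joint Taylor expansion is genuinely absent. The telescoping-with-intermediate-point device resolves this: each increment is treated exactly, the pure-time part involving no measure expansion and the pure-measure part no time expansion, so the cross term never appears. All remaining estimates---dominated convergence for the drift, the $L^2$-isometry for the martingale-measure integral, and the uniform-in-$t$ bounds on the derivatives supplied by $F\in C_b^{1,(2,2)}$---are identical to those already established in the proof of Theorem \ref{theorem1}.
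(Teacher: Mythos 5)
The paper offers no proof of Corollary \ref{corollary2} at all---it is stated as an immediate time-dependent extension of Theorem \ref{theorem1}---so your proposal fills a genuine omission rather than paralleling an existing argument, and what you propose is correct and is exactly the completion the paper's machinery calls for. The two key points are both right: the telescoping split of each increment into a pure-time difference at frozen measure (handled exactly by the fundamental theorem of calculus plus dominated convergence, using a.s.\ weak continuity of $u\mapsto\mu_u$) and a pure-measure difference at frozen time, so that no joint expansion and hence no cross term ever arises; and the observation that Proposition \ref{proposition1} is already stated for functions carrying an auxiliary parameter $u\in[0,T]$, so the symmetric-polynomial approximation applies verbatim to $f(u,x,y)=\frac{\delta^2}{\delta\mu^2}F(u,\mu^N_u,x,y)$ even though $u$ now enters both directly and through $\mu^N_u$. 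One piece of bookkeeping should be fixed: localize \emph{before} telescoping. As written, your telescoping is performed on $F(t,\mu_t)$ while the cut-off $\rho^N$ is introduced only inside the second sum (your formulas mix $\mu_{t^n_i}$ with the localized increments $m^{N,n}_i$, and the first-order Taylor term $\int_\mathbb{R}\frac{\delta}{\delta\mu}F(t^n_i,\mu_{t^n_i},x)\,m^n_i(dx)$ does not actually contain the factor $\rho^N$ that your application of Equation \ref{equation1} presumes); taken literally this requires interchanging the limits $n\to\infty$ and $N\to\infty$, which is precisely what the structure of the paper's proof of Theorem \ref{theorem1} is designed to avoid. The repair is trivial and matches the paper's scheme: prove the time-dependent analogue of Equation \ref{equation3} for $F(t,\mu^N_t)-F(s,\mu^N_s)$, with both the time sum and the measure sum written in terms of $\mu^N$ (so the Taylor expansion produces $\rho^N$ automatically via $m^{N,n}_i(dx)=\rho^N(x)m^n_i(dx)$), and only at the end let $N\to\infty$ as in Equations \ref{equation4}--\ref{equation7}. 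With that reordering, every step you describe goes through.
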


    To extend Theorem \ref{theorem1} to measures on $\mathbb{R}^d$, we only need to extend Proposition \ref{proposition1} and Proposition \ref{proposition2} to the d-dimensional case.
\begin{proposition}
    Let $K$ is a compact subset of $\mathbb{R}^{2d}$. Suppose $f:[0,T]\times K\to\mathbb{R}$ is continuous, and $f(u,\cdot,\cdot)$ is symmetric for any $u\in [0,T]$. Then there exists a sequence of polynomials $f_n:[0,T]\times\mathbb{R}\times\mathbb{R}\to\mathbb{R}$ such that $f_n(u,\cdot,\cdot)$ is symmetric for any $u\in [0,T]$ and $f_n$ converges to $f$ uniformly in $[0,T]\times K$.
\end{proposition}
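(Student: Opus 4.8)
The final statement to prove is the $d$-dimensional analogue of Proposition \ref{proposition1}: given a continuous function $f$ on $[0,T]\times K$ with $K\subset\mathbb{R}^{2d}$ compact and symmetric, and with $f(u,\cdot,\cdot)$ symmetric in its two $\mathbb{R}^d$-valued arguments, one can approximate $f$ uniformly by polynomials $f_n$ that remain symmetric in the two arguments.

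The plan is to mirror the one-dimensional argument almost verbatim. First I would apply the Stone--Weierstrass theorem (Lemma \ref{lemma1}) on the compact Hausdorff space $[0,T]\times K$, taking $\mathscr{A}$ to be the algebra of all polynomials in the variables $(u,x_1,\dots,x_d,y_1,\dots,y_d)$ restricted to this set. One must check that $\mathscr{A}$ separates points and vanishes nowhere: the constant function $1\in\mathscr{A}$ handles the non-vanishing condition, and given two distinct points the coordinate projections (which are polynomials) separate them, so both hypotheses of Lemma \ref{lemma1} hold. This yields a sequence of polynomials $g_n$ converging to $f$ uniformly on $[0,T]\times K$.

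Next I would symmetrize. Writing $x=(x_1,\dots,x_d)\in\mathbb{R}^d$ and $y=(y_1,\dots,y_d)\in\mathbb{R}^d$, define
\begin{equation}
    f_n(u,x,y)=\tfrac{1}{2}\bigl[g_n(u,x,y)+g_n(u,y,x)\bigr].
\end{equation}
Each $f_n$ is again a polynomial in $(u,x,y)$ and satisfies $f_n(u,x,y)=f_n(u,y,x)$ by construction, so it is symmetric in the two $\mathbb{R}^d$-arguments for every $u$. Because $K$ is symmetric (i.e. $(x,y)\in K \iff (y,x)\in K$) and $f$ is itself symmetric, for $(u,x,y)\in[0,T]\times K$ we have $f(u,x,y)=\tfrac12[f(u,x,y)+f(u,y,x)]$, and hence
\begin{equation}
    |f_n(u,x,y)-f(u,x,y)|\le\tfrac12|g_n(u,x,y)-f(u,x,y)|+\tfrac12|g_n(u,y,x)-f(u,y,x)|.
\end{equation}
Both terms on the right are bounded by $\|g_n-f\|_{\infty,[0,T]\times K}$ (using that $(u,y,x)\in[0,T]\times K$ whenever $(u,x,y)$ is), so $f_n\to f$ uniformly on $[0,T]\times K$.

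There is no serious obstacle here; the argument is a routine transcription of the proof of Proposition \ref{proposition1}, with scalars $x,y\in\mathbb{R}$ replaced by vectors in $\mathbb{R}^d$. The only points requiring care are the explicit use of the symmetry of the set $K$ (so that the reflected point $(u,y,x)$ stays in the domain and the supremum norm bound applies to both summands) and the harmless observation that the symmetrization of a polynomial is still a polynomial. I would note in passing that the companion Proposition \ref{proposition2}, which factors a symmetric polynomial as $\sum_j\lambda_j(u)g_j(x)g_j(y)$, extends similarly to the $d$-dimensional setting by the same degree-induction on the total degree in $(x,y)$, with the monomials $x^\alpha y^\beta$ (multi-index notation) playing the role of $x^n,y^n$; together these two propositions are all that is needed to carry the proof of Theorem \ref{theorem1} over to measures on $\mathbb{R}^d$.
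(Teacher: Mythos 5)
Your proof is correct and takes essentially the same route as the paper: the paper's own proof is literally the remark that the argument of Proposition \ref{proposition1} carries over, i.e., Stone--Weierstrass applied to the polynomial algebra on $[0,T]\times K$ followed by the symmetrization $f_n(u,x,y)=\tfrac12[g_n(u,x,y)+g_n(u,y,x)]$, which is exactly what you did. Your explicit verification of the Stone--Weierstrass hypotheses and your use of the symmetry of $K$ (a hypothesis the $d$-dimensional statement omits, evidently by oversight, but which is needed just as in the one-dimensional case) are harmless clarifications, not deviations.
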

\begin{proof}
    The proof is the same as Proposition \ref{proposition1}.
\end{proof}
\begin{proposition}
    Let $f:[0,T]\times\mathbb{R}^d\times\mathbb{R}^d\to\mathbb{R}$ be a polynomial such that $f(u,\cdot,\cdot)$ is symmetric for any $u\in [0,T]$. Then there exists some $m\in\mathbb{N}$ and polynomials $\lambda_j: [0,T]\to\mathbb{R}$ and $g_j:\mathbb{R}^d\to\mathbb{R}$, $j=1,2, \cdots,  m$, such that $$f(u,x,y)=\sum_{j=1}^m \lambda_j (u)g_j(x)g_j(y).$$
\end{proposition}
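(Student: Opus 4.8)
The plan is to reduce to a monomial expansion and then split each off-diagonal pair of monomials by a polarization identity; this is cleaner than the degree induction used for Proposition \ref{proposition2} and works verbatim in any dimension. First I would write $f$ as a finite sum of monomials with $u$-dependent coefficients,
$$f(u,x,y)=\sum_{\alpha,\beta}c_{\alpha\beta}(u)\,x^\alpha y^\beta,$$
where $\alpha,\beta$ range over multi-indices in $\mathbb{N}^d$, $x^\alpha=\prod_{i=1}^d x_i^{\alpha_i}$, and each $c_{\alpha\beta}$ is a polynomial in $u$. The symmetry hypothesis $f(u,x,y)=f(u,y,x)$ forces $c_{\alpha\beta}=c_{\beta\alpha}$ for all $\alpha,\beta$.

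Next, after fixing an arbitrary total order on the multi-indices, I would group the expansion into a diagonal part and an off-diagonal part:
$$f(u,x,y)=\sum_{\alpha}c_{\alpha\alpha}(u)\,x^\alpha y^\alpha+\sum_{\alpha<\beta}c_{\alpha\beta}(u)\big(x^\alpha y^\beta+x^\beta y^\alpha\big).$$
Each diagonal term is already in the required form, with $\lambda(u)=c_{\alpha\alpha}(u)$ and $g(x)=x^\alpha$.

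For the off-diagonal terms, the key step is the polarization identity
$$x^\alpha y^\beta+x^\beta y^\alpha=\tfrac12\big[(x^\alpha+x^\beta)(y^\alpha+y^\beta)-(x^\alpha-x^\beta)(y^\alpha-y^\beta)\big],$$
which rewrites each such pair as a sum of two terms of the form $g(x)g(y)$, with $g=x^\alpha\pm x^\beta$ a polynomial on $\mathbb{R}^d$ and coefficient $\pm\tfrac12 c_{\alpha\beta}(u)$, a polynomial in $u$. Collecting the finitely many terms produced this way yields the representation $f(u,x,y)=\sum_{j=1}^m\lambda_j(u)g_j(x)g_j(y)$.

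Since every step is an algebraic identity, there is no genuine analytic obstacle; the only things to check are the one-line expansion verifying the polarization identity and the bookkeeping that each resulting $g_j$ is a polynomial on $\mathbb{R}^d$ and each $\lambda_j$ a polynomial on $[0,T]$. A pleasant feature of this argument is that, being dimension-agnostic, it simultaneously furnishes an alternative induction-free proof of Proposition \ref{proposition2}.
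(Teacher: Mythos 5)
Your proof is correct and follows essentially the same route as the paper's: expand $f$ into monomials, use symmetry to pair the coefficients of $x^\alpha y^\beta$ and $x^\beta y^\alpha$, and resolve each off-diagonal pair with a polarization identity. The only cosmetic difference is the identity chosen --- the paper writes $x^\alpha y^\beta + x^\beta y^\alpha = (x^\alpha+x^\beta)(y^\alpha+y^\beta) - x^\alpha y^\alpha - x^\beta y^\beta$ and absorbs the extra diagonal terms into the diagonal part, while you use the signed form $\tfrac12\bigl[(x^\alpha+x^\beta)(y^\alpha+y^\beta)-(x^\alpha-x^\beta)(y^\alpha-y^\beta)\bigr]$, which avoids that bookkeeping.
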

\begin{proof}
    Notice that 
    \begin{equation}\label{equation18}
        \begin{aligned}
            \prod_{i=1}^d x_i^{r_i}y_i^{s_i}+\prod_{i=1}^d x_i^{s_i}y_i^{r_i} &=(\prod_{i=1}^d x_i^{r_i}+ \prod_{i=1}^d x_i^{s_i})(\prod_{i=1}^d y_i^{r_i}+ \prod_{i=1}^d y_i^{s_i})\\
            &-\prod_{i=1}^d x_i^{r_i}\prod_{i=1}^d y_i^{r_i}-\prod_{i=1}^d x_i^{s_i}\prod_{i=1}^d y_i^{s_i}.
        \end{aligned}
    \end{equation}
    Since $f$ is symmetric, the term $\prod_{i=1}^d x_i^{r_i}y_i^{s_i}$ and the term $\prod_{i=1}^d x_i^{s_i}y_i^{r_i}$ has the same coefficient $\lambda(u)$. Adding all terms together and using Equation \ref{equation18} we get
    $$f(u,x,y)=\sum_{j=1}^m \lambda_j (u)g_j(x)g_j(y),$$
    for some proper $m$, $\lambda_j$ and $g_j$.
\end{proof}

\begin{corollary}
    Theorem \ref{theorem1}, Corollary \ref{corollary1} and Corollary \ref{corollary2} holds true when $\mu_t$ take valued in the space of finite measures on $\mathbb{R}^d $.
\end{corollary}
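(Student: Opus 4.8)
The plan is to verify that the proof of Theorem \ref{theorem1} (and hence of Corollaries \ref{corollary1} and \ref{corollary2}) uses the one-dimensionality of the state space in only two places, both of which have just been upgraded to their $d$-dimensional counterparts. First I would scan the argument and confirm that every generic step is dimension-agnostic: the Taylor expansion of $F$ via the linear derivative, the spatial localization through the cut-off $\rho^N$, the dominated-convergence passage for the drift term (Equation \ref{equation16}), the $L^2$-convergence for the martingale term (Equation \ref{equation17}), and the three-set localization estimates (Equations \ref{equation9}--\ref{equation12}) all rely solely on the continuity of $\frac{\delta^2}{\delta\mu^2}F$, the boundedness of the coefficients, and the finiteness of $E[\int_s^t\int 1\,\mu_u(dx)\,du]$. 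None of these sees the dimension. The genuinely one-dimensional ingredients are Proposition \ref{proposition1} (uniform approximation of a continuous symmetric function by symmetric polynomials on a compact set) and Proposition \ref{proposition2} (factorization of a symmetric polynomial as $\sum_j \lambda_j(u) g_j(x) g_j(y)$), both invoked to treat the second-order term $\frac{\delta^2}{\delta\mu^2}F(\mu^N_u,\cdot,\cdot)$.

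Next I would record the required substitutions. The two propositions immediately preceding this corollary provide exactly the $\mathbb{R}^{2d}$ approximation and the $\mathbb{R}^d\times\mathbb{R}^d$ factorization, so they replace Propositions \ref{proposition1} and \ref{proposition2} verbatim in the argument. The cut-off $\rho^N$ is redefined on $\mathbb{R}^d$ as a smooth function equal to $1$ on the ball $\{|x|\le N\}$ and $0$ outside $\{|x|\le N+1\}$, with $\mu^N(dx)=\rho^N(x)\mu(dx)$ and the compact set $\mathcal{K}^N=\{|x|\le N+1\}$; the metric on $M(\mathbb{R}^d)$ was already introduced in Section \ref{sec2}. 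The generator $L$ becomes a $d$-dimensional second-order differential operator, and the SPDE representation of Proposition \ref{proposition3} persists because Walsh's martingale-measure theory is developed over an arbitrary Lusin space (Definition \ref{definition1}) and $\mathbb{R}^d$ is Lusin, so the density measure $\mu_u(dx)\,du$ and the stochastic-integral machinery carry over unchanged.

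The one computation where the dimension could in principle intervene is the step converting the double integral against $m^n_i\otimes m^n_i$ into a square. For each product term one still has $\int_{\mathbb{R}^{2d}} g_j(x)g_j(y)\rho^N(x)\rho^N(y)\,m^n_i(dx)\,m^n_i(dy) = [\int_{\mathbb{R}^d} g_j(x)\rho^N(x)\,m^n_i(dx)]^2$, and applying the decomposition (Equation \ref{equation1}, respectively Assumption \ref{assumption2}) to the test function $g_j\rho^N\in C_b^\infty(\mathbb{R}^d)$ expresses the inner increment as a drift part plus $\int_{t^n_i}^{t^n_{i+1}}\int_{\mathbb{R}^d}\sqrt{\gamma}\,g_j\rho^N\,\mathcal{M}(dx,du)$. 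Summing the squares and using the density (bracket) of the martingale measure then yields convergence in probability to $\int_s^t\int_{\mathbb{R}^d}\gamma\,g_j^2(\rho^N)^2\,\mu_u(dx)\,du$, precisely as in the one-dimensional computation. Thus Equations \ref{equation13}--\ref{equation14} and the ensuing localization go through word for word, and letting $N\to\infty$ as in Equations \ref{equation4}--\ref{equation7} delivers Theorem \ref{theorem1} on $M(\mathbb{R}^d)$; replacing $\sqrt{\gamma}(\cdot)$ by the operator $L_2$ and the bracket accordingly gives Corollary \ref{corollary1}, and inserting the $\int_s^t\frac{\partial}{\partial t}F\,du$ term gives Corollary \ref{corollary2}. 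I do not expect a substantive obstacle here: the real work was front-loaded into the $d$-dimensional versions of Propositions \ref{proposition1} and \ref{proposition2}, and what remains is bookkeeping. The only point warranting care is confirming that the martingale-measure and density-measure structure and the SPDE representation survive in $\mathbb{R}^d$, which is immediate from the Lusin-space generality of Walsh's construction.
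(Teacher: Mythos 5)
Your proposal is correct and follows exactly the paper's intended route: the paper states explicitly that extending Theorem \ref{theorem1} to $\mathbb{R}^d$ only requires the $d$-dimensional versions of Propositions \ref{proposition1} and \ref{proposition2}, which are proved immediately before the corollary, and then gives no further proof. Your verification that every other step (localization via $\rho^N$, dominated convergence, the $L^2$ estimates for the martingale-measure term, and the square-completion of the double integral against $m^n_i\otimes m^n_i$) is dimension-agnostic is precisely the bookkeeping the paper leaves implicit.
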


\section{HJB equation and verification theorem for controlled superprocesses}\label{sec4}

 Corollary \ref{corollary2} together with the dynamic principle for controlled superprocesses allow us to deduce the HJB eqauation and the verification theorem. The proof of dynamic principle for controlled superprocesses follows the path in \cite{bib2}. That is verifying the backward and forward stability of the family of sets of probability $\mathscr{P}_{t,\lambda}$, verifying the measurability of the domain the cost function and then using the measurable selection theorem. 

\begin{theorem}[Dynamic principle for controlled superprocesses]\label{theorem2}
    Given any $$(t,\lambda)\in [0,T]\times D([0,T];M(\mathbb{R}))$$ and $\tau$ a stopping time in $[0,T]$,  we have 
    $$V(t,\lambda)=\inf_{\mathbb{P}\in\mathscr{P}_{t,\lambda}}E^{\mathbb{P}}[\int_t^\tau f(x,\mu_u,a)m(da,dx,du)+V(\tau,\mu_{\tau\wedge\cdot})].$$
\end{theorem}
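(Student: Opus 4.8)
The plan is to prove the two inequalities separately, exploiting the concatenation operation $\mathbb{P}\otimes_\tau\mathbb{Q}_\cdot$ and the regular conditional probabilities set up just before Definition \ref{definition2}. The backbone is the additive structure of the cost: for any $\mathbb{P}\in\mathscr{P}_{t,\lambda}$, writing $C_\tau:=\int_t^\tau\int_{\mathbb{R}\times A}f(x,\mu_u,a)\,m(du,dx,da)$, the tower property gives $J(t,\lambda,\mathbb{P})=E^{\mathbb{P}}[\,C_\tau+E^{\mathbb{P}}[\int_\tau^T f\,dm+g(\mu_T)\mid\mathscr{F}_\tau]\,]$, so the whole problem reduces to identifying the inner conditional expectation with the cost of a control rule restarted at $(\tau,\mu_{\tau\wedge\cdot})$. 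The moment bound $E[\sup_u\mu_u(\mathbb{R})^p]<\infty$ quoted after the definition of $J$ guarantees all these integrals are finite and the conditional expectations are well defined.

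For the inequality $V(t,\lambda)\ge\inf_{\mathbb{P}}E^{\mathbb{P}}[C_\tau+V(\tau,\mu_{\tau\wedge\cdot})]$ I would use stability under conditioning. Fix $\mathbb{P}\in\mathscr{P}_{t,\lambda}$ and let $(\mathbb{P}^\tau_w)_w$ be regular conditional probabilities of $\mathbb{P}$ given $\mathscr{F}_\tau$. The first step is to show that for $\mathbb{P}$-a.e.\ $w$ one has $\mathbb{P}^\tau_w\in\mathscr{P}_{\tau(w),\,\mu_{\tau\wedge\cdot}(w)}$; this is the standard fact that the martingale characterization of Definition \ref{definition2} is preserved under disintegration (as in Stroock--Varadhan), once one verifies that each of the three defining requirements survives: the pinning $\mu_s=\lambda_s$ on $[0,\tau]$, the feedback form $m(ds,dx,da)=ds\,\mu_s(dx)\,\delta_{\alpha(s,x)}(da)$, and the martingale property of $F_\varphi(\mu_\cdot)-\int\mathscr{L}F_\varphi\,dm$. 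Restricting the test functions to a countable separating family $\{(F,\varphi)\}$ built from the $\varphi_k$ of Section \ref{sec2} makes the exceptional null set uniform. The inner conditional expectation then equals $J(\tau(w),\mu_{\tau\wedge\cdot}(w),\mathbb{P}^\tau_w)\ge V(\tau(w),\mu_{\tau\wedge\cdot}(w))$, and taking expectations and then the infimum over $\mathbb{P}$ yields the claim.

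For the reverse inequality $V(t,\lambda)\le\inf_{\mathbb{P}}E^{\mathbb{P}}[C_\tau+V(\tau,\mu_{\tau\wedge\cdot})]$ I would combine forward stability with measurable selection. Given $\mathbb{P}\in\mathscr{P}_{t,\lambda}$ and $\varepsilon>0$, the Jankov--von Neumann theorem produces an $\mathscr{F}_\tau$-measurable family $w\mapsto\mathbb{Q}_w\in\mathscr{P}_{\tau(w),\,\mu_{\tau\wedge\cdot}(w)}$ with $J(\tau(w),\mu_{\tau\wedge\cdot}(w),\mathbb{Q}_w)\le V(\tau(w),\mu_{\tau\wedge\cdot}(w))+\varepsilon$, which requires that the graph $\{(t,\lambda,\mathbb{P}):\mathbb{P}\in\mathscr{P}_{t,\lambda}\}$ be analytic and that $V$ be lower semianalytic. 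Forward stability is the assertion that the concatenation $\widehat{\mathbb{P}}:=\mathbb{P}\otimes_\tau\mathbb{Q}_\cdot$ again lies in $\mathscr{P}_{t,\lambda}$; one checks this by showing $F_\varphi(\mu_s)-\int_t^s\int\mathscr{L}F_\varphi\,dm$ is a $\widehat{\mathbb{P}}$-martingale, splitting a generic increment at $\tau$ and invoking the martingale property of $\mathbb{P}$ before $\tau$ and of each $\mathbb{Q}_w$ afterwards, using the explicit description of $w\otimes_\tau\omega$ to match paths and controls at the junction. By additivity of the cost, $J(t,\lambda,\widehat{\mathbb{P}})\le E^{\mathbb{P}}[C_\tau+V(\tau,\mu_{\tau\wedge\cdot})]+\varepsilon$, and letting $\varepsilon\downarrow0$ and then taking the infimum over $\mathbb{P}$ gives the bound.

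I expect the main obstacle to be the measurability layer feeding the selection step: establishing, following \cite{bib2}, that $(t,\lambda)\mapsto\mathscr{P}_{t,\lambda}$ has analytic graph and that $V$ is lower semianalytic. This rests on expressing $\mathscr{P}_{t,\lambda}$ as a countable intersection of conditions indexed by the separating family $\{(F,\varphi)\}$ and by rational time pairs, so that membership is a Borel (hence analytic) condition jointly in $(t,\lambda,\mathbb{P})$. A secondary but genuine difficulty, specific to this measure-valued formulation, is that the feedback representation $m=ds\,\mu_s\,\delta_{\alpha}$ in the second bullet of Definition \ref{definition2} must itself be shown stable under both conditioning and concatenation; unlike the bare martingale property, this structural constraint has to be propagated through the disintegration and across the junction at $\tau$ by hand.
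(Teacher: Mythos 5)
Your proposal is correct and follows essentially the same route as the paper: both arguments rest on the same three pillars, namely backward stability of $\mathscr{P}_{t,\lambda}$ under disintegration via regular conditional probabilities (with a countable family of test functions to control the null sets, using the results of \cite{bib22}), forward stability under the concatenation $\mathbb{P}\otimes_\tau\mathbb{Q}_\cdot$, and measurability of the graph $\{(w,\mathbb{P}):\mathbb{P}\in\mathscr{P}_{\tau(w),\mu(w)}\}$ feeding a measurable selection of $\varepsilon$-optimal kernels. The only difference is presentational: you assemble the two inequalities by hand, whereas the paper, having verified exactly these conditions, delegates that final assembly to the abstract dynamic programming theorem (Theorem 2.1) of \cite{bib2}.
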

\begin{proof}
    We first verify that $(\mathscr{P}_{t,\lambda})_{(t,\lambda)\in\mathbb{R}^+\times D([0,T];M(\mathbb{R}))}$ satisfies backward condition defined in Assumption 2.2 in \cite{bib2}.
    Note that the space $\Omega$ is a Polish space with some suitable topology where the $\sigma$-algebra generated by its Borel sets is $\mathscr{F}_T$. Also note that for any stopping time $\tau\in [0,T]$, $\mathscr{F}_\tau$ is generated by the map $f:\Omega\to\Omega$ where the image space is equipped with the countably generated Borel $\sigma$-algebra and
    $$f(\omega)=f((\mu(\omega),m(\omega)))= (\mu_{\tau(\omega)\wedge\cdot},m^{\tau(\omega)}),$$
    where $m^t(B)=m(B\cap [0,t]\times \mathbb{R}\times A)$ for any $B\in \mathscr{B}([0,t]\times \mathbb{R}\times A)$. Therefore, $\mathscr{F}_\tau$ is countably generated. By Theorem 1.1.6, Theorem 1.1.8 in \cite{bib22}, given any $\mathbb{P}\in\mathscr{P}_{t,\lambda}$, there exists a family of regular conditional probability distribution $(\mathbb{P}_{w})_{w\in\Omega}$ with respect to $\mathscr{F}_\tau$. 
    Since $\{\omega:\mu_s(\omega)=\mu_s(w), 0\leq s\leq \tau(\omega)\}$ include the $\omega_0$-atom with respect to $\mathscr{F}_\tau$, we have $\mathbb{P}_{w}(\{\mu_s(\omega)=\mu_s(w), 0\leq s\leq t\})=1$ for $\mathbb{P}$-a.e $w$. Since
    $$\mathbb{P}(\{\omega: \exists \alpha\in B([0,T]\times\mathbb{R}; A) , m(ds, dx, da)=ds\mu_s(dx)\delta_{\alpha(s,x)}(da)\})=1,$$
    we have $$\mathbb{P}_{w}(\{\omega: \exists \alpha\in B([0,T]\times\mathbb{R}; A) , m(ds, dx, da)=ds\mu_s(dx)\delta_{\alpha(s,x)}(da)\})=1$$ for $\mathbb{P}$-a.e. $w$.
    Then for any fixed $F,\varphi\in C^2_b(\mathbb{R})$, by Theorem 1.2.10 in \cite{bib22},
    $$F_{\varphi}(\mu_s)-\int_t^s \int_{\mathbb{R}\times A} \mathscr{L}F_\varphi(x,\mu_u,a)m(da, dx, du)$$
    is a $(\mathbb{P}_{w},\mathscr{F})$ martingale for all $\tau(w)\leq t\leq s \leq T$ for $\mathbb{P}$-a.e. $w$. Notice that $C^2_b(\mathbb{R})$ has a countable dense subset $\mathscr{C}$ and $\mathscr{L}$ is continuous with respect to $F$ and $\varphi$. So we can find a $\mathbb{P}$-null set $N$, for any $w$ outside of $N$, any $F,\varphi\in C^2_b(\mathbb{R})$ and any stopping time $\tau_1\geq \tau(w)$, by bounded convergence theorem,
    \begin{equation}
        \begin{aligned}
            &E^{\mathbb{P}_{w}}[F_{\varphi}(\mu_{\tau_1})-\int_t^{\tau_1} \int_{\mathbb{R}\times A} \mathscr{L}F_\varphi(x,\mu_u,a)m(da, dx, du)]\\
            =&\lim_{n\to\infty}E^{\mathbb{P}_{\omega_0}}[F_{n,\varphi_n}(\mu_{\tau_1})-\int_t^{\tau_1} \int_{\mathbb{R}\times A} \mathscr{L}F_{n,\varphi_n}(x,\mu_u,a)m(da, dx, du)]\\
            =&0.
        \end{aligned}
    \end{equation}
    Therefore, $\mathbb{P}_{w}\in\mathscr{P}_{t,\lambda}$ for $\mathbb{P}$-a.e. $w$. Next we verify that $\mathscr{P}_{t,\lambda}$ satisfies the forward condition in Assumption 2.2 in \cite{bib2}.

    Given a probability measure $\mathbb{P}\in\mathscr{P}_{t,\lambda}$ and a family of probability kernels $(Q_{w})_{w\in\Omega}$ such that $Q_{w}\in\mathscr{P}_{\tau(w), \mu(w)}$ for $\mathbb{P}$-a.e. $w$. Fix some $F,\varphi\in \mathscr{C}$, for $\mathbb{P}$-a.e. $w$, $$F_{\varphi}(\mu_s)-\int_{\tau(w)}^s \int_{\mathbb{R}\times A} \mathscr{L}F_\varphi(x,\mu_u,a)m(da, dx, du)$$ is a $\delta_w \otimes_{\tau(w)} \mathbb{Q}_\cdot$- martingale for $s\in [\tau(w),T]$.
    By Theoreom 1.2.10 in \cite{bib22}, 
    $$F_{\varphi}(\mu_s)-\int_t^s \int_{\mathbb{R}\times A} \mathscr{L}F_\varphi(x,\mu_u,a)m(da, dx, du)$$
    is a  $\mathbb{P}\otimes_\tau \mathbb{Q}_\cdot$-martingale. It's easy to check that 
    $$\mathbb{P}\otimes_\tau \mathbb{Q}_\cdot (\{\omega: \exists \alpha\in B([0,T]\times\mathbb{R}; A) , m(ds, dx, da)=ds\mu_s(dx)\delta_{\alpha(s,x)}(da)\})=1$$
    and $\mathbb{P}\otimes_\tau \mathbb{Q}_\cdot (\{\mu_s(\omega)=\mu_s(\lambda), 0\leq s\leq t\}=1)$.Therefore $\mathbb{P}\otimes_{\tau}\mathbb{Q}_\cdot \in \mathscr{P}_{t,\lambda}$.  
    
    Now we consider the $\mathscr{F}_\tau \otimes \mathscr{B}(\mathscr{P}(\Omega))$-measurability of the set
    $$B:=\{(w,\mathbb{P})\in \Omega\times \mathscr{P}(\Omega)): \mathbb{P}\in\mathscr{P}_{\tau(w),\mu(w)} \}.$$
    Let 
    $$B_0:=\{(w,\mathbb{P}):\mathbb{P}(\{\omega:\mu_s(\omega)=\mu_s(w), 0\leq s\leq \tau(w)\})=1\},$$
    $$B_1:=\{(w,\mathbb{P}):\mathbb{P}(\{\omega: \exists \alpha\in B([0,T]\times\mathbb{R}; A) , m(ds, dx, da)=ds\mu_s(dx)\delta_{\alpha(s,x)}(da)\})=1\}$$
    and
    \begin{equation}
        \begin{aligned}
            B_{F,\varphi,s,t,c}:=\{(w,\mathbb{P}):E^\mathbb{P}[F_{\varphi}(\mu_s)-\int_0^s \int_{\mathbb{R}\times A} \mathscr{L}F_\varphi(x,\mu_u,a)m(da, dx, du)\\
            -([F_{\varphi}(\mu_t)-\int_0^t \int_{\mathbb{R}\times A} \mathscr{L}F_\varphi(x,\mu_u,a)m(da, dx, du))1_{C}]=0\}
        \end{aligned}
    \end{equation}
    for any $s,t\in [0,T]\cap \mathbb{Q}$, $F,\varphi\in\mathscr{C}$, $B\in \mathscr{C}(\mathscr{F}_s)$, where $\mathscr{C}(\mathscr{F}_s)$ is a countable algebra that generates $\mathscr{F}_s$. Notice that $B_0,B_1,B_{F,\varphi,s,t,C}$ are all $\mathscr{F}_\tau \otimes \mathscr{B}(\mathscr{P}(\Omega))$-measurable and 
    $$B=B_0\cap B_1 \cap \bigcap_{F,\varphi,s,t,C}B_{F,\varphi,s,t,C}.$$
    Hence $B$ is $\mathscr{F}_\tau \otimes \mathscr{B}(\mathscr{P}(\Omega))$-measurable.

    Since $J(\tau(w),\mu_{\tau(w)\wedge\cdot}(w),\mathbb{P})$ is $\mathscr{F}_\tau \otimes \mathscr{B}(\mathscr{P}(\Omega))$-measurable, by measurable selection theorem, there is a family of $\mathscr{F}^U_\tau$-kernel $(\tilde{\mathbb{Q}}^\varepsilon_w)_{w\in\Omega}$ such that $(w,\tilde{\mathbb{Q}}^\varepsilon_w)\in B$ and $$J(\tau(w),\mu_{\tau(w)\wedge\cdot}(w),\tilde{\mathbb{Q}}^\varepsilon_w)\leq V(\tau(w),\mu_{\tau\wedge\cdot}(w))+\varepsilon.$$
    For any given $\mathbb{P}\in \mathscr{P}_{t,\lambda}$ and $\tau\in [t,T]$, we can find a family of $\mathscr{F}_\tau$-measurable probability kernel $(\mathbb{Q}^\varepsilon_w)_{w\in\Omega}$ such that 
    \begin{itemize}
        \item $\mathbb{Q}^\varepsilon_\cdot$ is a $\mathbb{P}$-modification of $\tilde{\mathbb{Q}}^\varepsilon_\cdot$;
        \item $\mathbb{Q}^\varepsilon_w\in \mathscr{P}_{\tau(w), \mu(w)}$ for $\mathbb{P}$-a.e. $w$;
        \item $J(\tau(w),\mu_{\tau(w)\wedge\cdot}(w),\mathbb{Q}^\varepsilon_w)=J(\tau(w),\mu_{\tau(w)\wedge\cdot}(w),\tilde{\mathbb{Q}}^\varepsilon_w)$ for $\mathbb{P}$-a.e. $w$.
    \end{itemize}
    
    With the discussion above, the dynamic principle for controlled superprocesses follows from Theorem 2.1 in \cite{bib2}.
    \end{proof}

    \begin{remark}\label{remark2}
    For any two paths $\lambda^1$ and $\lambda^2$ such that $\lambda^1_t=\lambda^2_t$, any $\mathbb{P}^1\in\mathscr{P}_{t,\lambda^1}$ and $\mathbb{P}^2\in\mathscr{P}_{t,\lambda^2}$, suppose $(\mathbb{P}^1_w)_{w\in\Omega}$ (resp. $(\mathbb{P}^2_w)_{w\in\Omega}$) is a family of regular conditional probability distribution of $\mathbb{P}^1$ (resp. $\mathbb{P}^2$) with respect to $\mathscr{F}_t$. We have $\mathbb{P}^1 \otimes_t \mathbb{P}^2_\cdot \in\mathscr{P}_{t,\lambda^1}$ and $J(t,\lambda^1,\mathbb{P}^1\otimes_t \mathbb{P}^2_\cdot)=J(t,\lambda^2,\mathbb{P}^2)$. So $V(t,\lambda^1)=V(t,\lambda^2)$. Therefore, we can regard $V$ as a function defined in $[0,T]\times M(\mathbb{R})$ such that $V(t,\lambda_t)= V(t,\lambda)$ with abuse of notation. 
    \end{remark}
    
    Next we give a formal derivation of HJB equation for controlled superprocesses. With Remark \ref{remark2}, we can consider the differential property of $V$. And we will use $\lambda$ to denote a measure instead of a measure-valued process for simplicity.
    \begin{theorem}\label{theorem3}
        If $V(t,\lambda)\in C^{1,(2,2)}_b$, then $V$ is the solution of the following HJB equation.
        \begin{equation}\label{equation22}
            \left\{
                \begin{aligned}
                     &\frac{\partial}{\partial t}V(u,\lambda)+\int_\mathbb{R} \inf_{a\in A}[L\frac{\delta}{\delta \mu} V(u, \lambda,\cdot)(x,\lambda,a)\\
           &\qquad \qquad+\frac{1}{2}\gamma(x,\lambda,a) \frac{\delta^2}{\delta \mu^2}V(u, \lambda,x,x)+f(x,\lambda,a)]\lambda(dx)= 0 \\
                     & V(T,\lambda)=g(T,\lambda)
                \end{aligned}
            \right.
        \end{equation}  
    \end{theorem}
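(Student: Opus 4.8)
The plan is to treat the terminal condition and the interior equation in \reff{equation22} separately, obtaining the interior equation by combining the dynamic programming principle of Theorem \ref{theorem2} with the time–dependent It\^o formula of Corollary \ref{corollary2}. The terminal condition is immediate: for $\mathbb{P}\in\mathscr{P}_{T,\lambda}$ the running integral $\int_T^T f\,dm$ is empty and $\mu_T=\lambda_T$ almost surely, so $J(T,\lambda,\mathbb{P})=E^{\mathbb{P}}[g(\mu_T)]=g(\lambda)$ independently of $\mathbb{P}$, whence $V(T,\lambda)=g(\lambda)$. For the interior equation I would abbreviate
$$\mathcal{H}(u,\lambda,x,a):=L\tfrac{\delta}{\delta\mu}V(u,\lambda,\cdot)(x,\lambda,a)+\tfrac12\gamma(x,\lambda,a)\tfrac{\delta^2}{\delta\mu^2}V(u,\lambda,x,x)+f(x,\lambda,a),$$
and set $\Psi(u,\lambda):=\tfrac{\partial}{\partial t}V(u,\lambda)+\int_{\mathbb{R}}\inf_{a\in A}\mathcal{H}(u,\lambda,x,a)\,\lambda(dx)$, so that the goal becomes $\Psi\equiv 0$, proved by two opposite inequalities.

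For $\Psi\ge 0$, I would fix an arbitrary $\mathbb{P}\in\mathscr{P}_{t,\lambda}$ carrying a time-independent spatial profile $a(\cdot)$, apply Theorem \ref{theorem2} with the deterministic time $\tau=t+h$ to get $V(t,\lambda)\le E^{\mathbb{P}}[\int_t^{t+h}f\,dm+V(t+h,\mu_{(t+h)\wedge\cdot})]$, and expand the last term by Corollary \ref{corollary2}. Taking expectations kills the martingale–measure integral, since its integrand is bounded because $V\in C^{1,(2,2)}_b$ and $\gamma$ is bounded, and after merging the It\^o drift with the running cost into $\mathcal{H}$ one is left with
$$0\le E^{\mathbb{P}}\Big[\int_t^{t+h}\Big(\tfrac{\partial}{\partial t}V(u,\mu_u)+\int_{\mathbb{R}}\mathcal{H}(u,\mu_u,x,a(x))\,\mu_u(dx)\Big)du\Big].$$
Dividing by $h$ and sending $h\downarrow 0$, path continuity $\mu_u\to\lambda$ together with continuity of the coefficients and dominated convergence give $0\le\tfrac{\partial}{\partial t}V(t,\lambda)+\int_{\mathbb{R}}\mathcal{H}(t,\lambda,x,a(x))\,\lambda(dx)$. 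Since this holds for every admissible profile and the control is distributed pointwise in $x$, taking the infimum over profiles and interchanging it with the integral yields $0\le\Psi(t,\lambda)$.

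For $\Psi\le 0$, I would use near optimality rather than a fixed control: since $V(t,\lambda)$ is the infimum in Theorem \ref{theorem2}, for each $\eta>0$ there is $\mathbb{P}^{\eta}\in\mathscr{P}_{t,\lambda}$, with some profile $\alpha^{\eta}$, such that $E^{\mathbb{P}^{\eta}}[\int_t^{t+h}f\,dm+V(t+h,\mu_{(t+h)\wedge\cdot})]\le V(t,\lambda)+\eta$. Expanding by Corollary \ref{corollary2} and using the trivial pointwise bound $\mathcal{H}(u,\mu_u,x,\alpha^{\eta}_u(x))\ge\inf_{a\in A}\mathcal{H}(u,\mu_u,x,a)$ gives $\eta\ge E^{\mathbb{P}^{\eta}}[\int_t^{t+h}\Psi(u,\mu_u)\,du]$. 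Choosing $\eta=\eta(h)$ with $\eta/h\to0$, dividing by $h$, and letting $h\downarrow0$, the boundedness of $\Psi$ (again from $V\in C^{1,(2,2)}_b$ and bounded coefficients) and continuity of $u\mapsto\Psi(u,\mu_u)$ along paths let me pass to the limit inside the expectation and conclude $0\ge\Psi(t,\lambda)$. Combining the two inequalities gives $\Psi\equiv0$, which is \reff{equation22}.

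The hard part will be the step in the first inequality where the infimum over control profiles is pushed inside the spatial integral, i.e. establishing $\inf_{a(\cdot)}\int_{\mathbb{R}}\mathcal{H}(t,\lambda,x,a(x))\,\lambda(dx)=\int_{\mathbb{R}}\inf_{a\in A}\mathcal{H}(t,\lambda,x,a)\,\lambda(dx)$. This is a measurable–selection statement: $x\mapsto\inf_{a}\mathcal{H}$ is Borel by joint continuity of $\mathcal{H}$ and compactness of $A$, but realizing a nearly pointwise–optimal profile as an admissible weak control rule requires that $\mathscr{P}_{t,\lambda}$ actually contain rules with genuinely $x$–dependent controls — exactly the existence issue raised in the Remark following Definition \ref{definition2}. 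I expect to handle this by approximating the measurable near-minimizer with profiles (e.g. Lipschitz in $x$, for which the composed coefficients $b(x,\lambda,a(x)),\sigma(x,\lambda,a(x)),\gamma(x,\lambda,a(x))$ remain Lipschitz) for which existence of a weak control rule is guaranteed, and then passing to the limit. A secondary technical point, needed to pass both $h\downarrow0$ limits inside the expectations, is the weak–topology continuity of $\lambda\mapsto\Psi(u,\lambda)$, which should follow from boundedness and continuity of the spatial integrand guaranteed by $V\in C^{1,(2,2)}_b$ and boundedness of $b,\sigma,\gamma,f$.
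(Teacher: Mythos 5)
Your proposal follows essentially the same route as the paper's proof: the dynamic programming principle (Theorem \ref{theorem2}) combined with the time-dependent It\^o formula (Corollary \ref{corollary2}), with a fixed control profile yielding one inequality and near-optimal controls yielding the other. The differences are ones of care rather than of approach: you make explicit the terminal condition, the $\eta(h)/h\to 0$ choice of near-optimal controls, and the measurable-selection/existence issue behind interchanging the infimum with the spatial integral, all of which the paper's proof asserts without comment.
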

\begin{proof}
    Given any $a_0:\mathbb{R}\to A$ and $s\in[t,T]$, there is some $\mathbb{P}\in\mathscr{P}_{t,\lambda}$ such that $\mathbb{P}(m(du, dx,da )=du\mu_u(dx)\delta_{a_0(x)}(da))=1$. By Theorem \ref{theorem2}, 
    \begin{equation}\label{equation19}
        \begin{aligned}
            V(t,\lambda)&\leq E^\mathbb{P}[\int_t^s f(x,\mu_u,a)m(da,dx,du)+V(s,\mu_{s})]\\
            &= E^\mathbb{P}[\int_t^s f(x,\mu_u,a_0(x))\mu_u(dx)du+V(s,\mu_{s})].
        \end{aligned}
    \end{equation}
    By Remark\ref{remark1}, Proposition\ref{proposition3} and Corollary\ref{corollary2},
    \begin{equation}\label{equation20}
    \begin{aligned}
        E^\mathbb{P}[V(s,\mu_s)-V(t,\lambda)]&=E^\mathbb{P}[\int_t^s\frac{\partial}{\partial t}V(u,\mu_u)du\\
        &+\int_t^s \int_\mathbb{R} L\frac{\delta}{\delta \mu} V(u, 
        \mu_u,\cdot)(x,\mu_u,a_0(x))\mu_u(dx)du\\
        &+\frac{1}{2}\int_t^s\int_\mathbb{R} \gamma(x,\mu_u,a_0(x)) \frac{\delta^2}{\delta \mu^2}V(u, \mu_u,x,x)\mu_u(dx)du]
    \end{aligned}
    \end{equation}
    Combining \ref{equation19} and \ref{equation20}, dividing both sides by $t-s$ then let $s\to t$, we get
    
    \begin{equation}
        \begin{aligned}
            &\frac{\partial}{\partial t}V(u,\lambda)+\int_\mathbb{R} [L\frac{\delta}{\delta \mu} V(u, \lambda,\cdot)(x,\lambda,a_0(x))\\
            +&\frac{1}{2}\gamma(x,\lambda,a_0(x)) \frac{\delta^2}{\delta \mu^2}V(u, \lambda,x,x)+f(x,\lambda,a_0(x))]\lambda(dx)\geq 0.
        \end{aligned}
    \end{equation}
    Therefore, 
        \begin{equation}
        \begin{aligned}
            &\frac{\partial}{\partial t}V(u,\lambda)+\int_\mathbb{R} \inf_{a\in A}[L\frac{\delta}{\delta \mu} V(u, \lambda,\cdot)(x,\lambda,a)\\
            +&\frac{1}{2}\gamma(x,\lambda,a) \frac{\delta^2}{\delta \mu^2}V(u, \lambda,x,x)+f(x,\lambda,a)]\lambda(dx)\geq 0.
        \end{aligned}
    \end{equation}
    On the other hand, given any $\varepsilon>0$ and $s>t$, there is some $\mathbb{P}\in\mathscr{P}_{t,\lambda}$ such that $\mathbb{P}(\{(\mu,m):\exists\alpha, m(du,dx,da )=du\mu_u(dx)\delta_{\alpha_u(x)}(da)\})=1$ and
    \begin{equation}
        \begin{aligned}\label{equation21}
             V(t,\lambda)+\varepsilon (t-s)&\geq E^\mathbb{P}[\int_t^s f(x,\mu_u,a)m(da,dx,du)+V(s,\mu_{s})]\\
            &= E^\mathbb{P}[\int_t^s f(x,\mu_u,\alpha_u(x))\mu_u(dx)du+V(s,\mu_{s})].
        \end{aligned}
    \end{equation}
    By Remark\ref{remark1}, Proposition\ref{proposition3} and Corollary\ref{corollary2},
    \begin{equation}\label{equation23}
        \begin{aligned}
        E^\mathbb{P}[V(s,\mu_s)-V(t,\lambda)]&=E^\mathbb{P}[\int_t^s\frac{\partial}{\partial t}V(u,\mu_u)du\\
        &+\int_t^s \int_\mathbb{R} L\frac{\delta}{\delta \mu} V(u, 
        \mu_u,\cdot)(x,\mu_u,\alpha_u(x))\mu_u(dx)du\\
        &+\frac{1}{2}\int_t^s\int_\mathbb{R} \gamma(x,\mu_u,\alpha_u(x)) \frac{\delta^2}{\delta \mu^2}V(u, \mu_u,x,x)\mu_u(dx)du]
    \end{aligned}
    \end{equation}
    Combining \ref{equation23} and \ref{equation21}, dividing both sides by $t-s$, we get
    \begin{equation}
        \begin{aligned}
            0&\geq \lim_{s\to t}E^{\mathbb{P}}[\frac{1}{s-t}\int_t^s(\frac{\partial}{\partial t}V(u,\mu_u)+\int_\mathbb{R}(L\frac{\delta}{\delta \mu} V(u, 
        \mu_u,\cdot)(x,\mu_u,\alpha_u(x))\mu_u(dx)du\\
        &\qquad\qquad+\frac{1}{2}\gamma(x,\mu_u,\alpha_u(x)) \frac{\delta^2}{\delta \mu^2}V(u, \mu_u,x,x)+f(x,\mu_u,\alpha_u(x)))\mu_u(dx))du)]\\
            &\geq \lim_{s\to t} E^{\mathbb{P}}[\frac{1}{s-t}\int_t^s(\frac{\partial}{\partial t}V(u,\mu_u)+\int_\mathbb{R}\inf_{a\in A}(L\frac{\delta}{\delta \mu} V(u, 
        \mu_u,\cdot)(x,\mu_u,a)\mu_u(dx)du\\
        &\qquad\qquad\qquad\qquad+\frac{1}{2}\gamma(x,\mu_u,a) \frac{\delta^2}{\delta \mu^2}V(u, \mu_u,x,x)+f(x,\mu_u,a))\mu_u(dx))du)]\\
        &=\frac{\partial}{\partial t}V(u,\lambda)+\int_\mathbb{R} \inf_{a\in A}[L\frac{\delta}{\delta \mu} V(u, \lambda,\cdot)(x,\lambda,a))\\
            &\qquad\qquad\qquad\qquad\qquad+\frac{1}{2}\gamma(x,\lambda,a) \frac{\delta^2}{\delta \mu^2}V(u, \lambda,x,x)+f(x,\lambda,a)]\lambda(dx).
        \end{aligned}
    \end{equation}
\end{proof}

The following theorem is the inverse of Theorem \ref{theorem3} with the smooth condition.

    \begin{theorem}[Verification theorem for controlled superprocesses]\label{theorem4}
        Given $$W\in C^{1,(2,2)}_b([0,T)\times M(\mathbb{R}))\cap C^0([0,T]\times M(\mathbb{R}))$$.
        \begin{itemize}
            \item [(i)]
            Suppose that
            \begin{equation}
                \left\{
                \begin{aligned}
                     &\frac{\partial}{\partial t}W(u,\lambda)+\int_\mathbb{R} \inf_{a\in A}[L\frac{\delta}{\delta \mu} W(u, \lambda,\cdot)(x,\lambda,a)\\
           &\qquad \qquad+\frac{1}{2}\gamma(x,\lambda,a) \frac{\delta^2}{\delta \mu^2}W(u, \lambda,x,x)+f(x,\lambda,a)]\lambda(dx)\geq 0 \\
                     & W(T,\lambda)\leq g(T,\lambda).
                \end{aligned}
            \right.
            \end{equation}
            Then $W\leq V$ on $[0,T]\times M(\mathbb{R})$.
            \item [(ii)]
            Suppose further that $W(T,\lambda)=g(\lambda)$ and there exists a measurable function $\alpha^*:[0,T]\times M(\mathbb{R})\times\mathbb{R}\to\mathbb{R}$ such that
            \begin{equation}\label{equation24}
                \begin{aligned}
                    &L\frac{\delta}{\delta \mu} W(u, \lambda,\cdot)(x,\lambda,\alpha^*(u,\lambda,x))\\
                    &\qquad\qquad+\frac{1}{2}\gamma(x,\lambda,\alpha^*(u,\lambda,x)) \frac{\delta^2}{\delta \mu^2}W(u, \lambda,x,x)+f(x,\lambda,\alpha^*(u,\lambda,x))\\
                    =&\inf_{a\in A}[L\frac{\delta}{\delta \mu} W(u, \lambda,\cdot)(x,\lambda,a)\\
           &\qquad \qquad+\frac{1}{2}\gamma(x,\lambda,a) \frac{\delta^2}{\delta \mu^2}W(u, \lambda,x,x)+f(x,\lambda,a)],
                \end{aligned}
            \end{equation}
            and for any $(t,\lambda)$, there exists some $\mathbb{P}\in\mathscr{P}_{t,\lambda}$ such that
            $$\mathbb{P}(\{(\mu,m):m(du,dx,da)=du\mu_u(dx)\delta_{\alpha^*(u,\mu_u,x)}(da)\})=1.$$
            Then $W=V$ and $\alpha^*$ is an optimal Markovian control.
        \end{itemize}
    \end{theorem}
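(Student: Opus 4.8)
The plan is to run the classical verification argument transplanted to the measure-valued setting through the time-dependent It\^o formula of Corollary \ref{corollary2}. For part (i), I would fix $(t,\lambda)$ and an arbitrary $\mathbb{P}\in\mathscr{P}_{t,\lambda}$; by Remark \ref{remark1} and Proposition \ref{proposition3}, under $\mathbb{P}$ the canonical process $\mu$ is a weak solution of Equation \ref{equation1} driven by some control $\alpha$, so Corollary \ref{corollary2} applies to $W$ on each subinterval $[t,s]\subset[t,T)$. Taking $E^{\mathbb{P}}$ and using that the martingale-measure term has zero mean (justified below), I obtain
\[
E^{\mathbb{P}}[W(s,\mu_s)]-W(t,\lambda)=E^{\mathbb{P}}\Big[\int_t^s\Big(\tfrac{\partial}{\partial t}W(u,\mu_u)+\int_\mathbb{R}\big(L\tfrac{\delta}{\delta\mu}W(u,\mu_u,\cdot)(x,\mu_u,\alpha_u)+\tfrac12\gamma(x,\mu_u,\alpha_u)\tfrac{\delta^2}{\delta\mu^2}W(u,\mu_u,x,x)\big)\mu_u(dx)\Big)du\Big].
\]
Adding and subtracting $f(x,\mu_u,\alpha_u)$ inside the spatial integral and bounding the resulting integrand from below by its pointwise infimum over $a\in A$, the differential inequality hypothesized in part (i) shows the drift bracket is $\ge-\int_\mathbb{R} f(x,\mu_u,\alpha_u)\mu_u(dx)$. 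Letting $s\uparrow T$, using $W\in C^0$ and $W(T,\cdot)\le g$, this rearranges to $W(t,\lambda)\le J(t,\lambda,\mathbb{P})$; taking the infimum over $\mathbb{P}\in\mathscr{P}_{t,\lambda}$ gives $W\le V$.

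For part (ii) it remains to exhibit a control attaining the value. I would take the admissible rule $\mathbb{P}^*\in\mathscr{P}_{t,\lambda}$ whose control equals the measurable minimizer $\alpha^*$ furnished by the hypothesis. Along $\mathbb{P}^*$, Equation \ref{equation24} makes the pointwise infimum exact, so the drift bracket above collapses to $\tfrac{\partial}{\partial t}W+\int_\mathbb{R}\inf_{a\in A}[\cdots]\,\mu_u(dx)$; as this is precisely the left-hand side of the HJB relation, which holds with equality for the optimal selector, the It\^o expansion under $\mathbb{P}^*$ carries no drift. Repeating the computation of part (i) now with equalities, and using $W(T,\cdot)=g$, yields $W(t,\lambda)=J(t,\lambda,\mathbb{P}^*)\ge V(t,\lambda)$. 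Combined with $W\le V$ from part (i) this forces $W=V$, and since $J(t,\lambda,\mathbb{P}^*)=V(t,\lambda)$ the Markovian selector $\alpha^*$ is optimal.

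The main obstacle is the measure-valued stochastic-analytic bookkeeping rather than the algebra. First, one must justify that the Walsh integral $\int_t^s\int_\mathbb{R}\sqrt{\gamma(x,\mu_u,\alpha_u(x))}\,\tfrac{\delta}{\delta\mu}W(u,\mu_u,x)\,\mathcal{M}(dx,du)$ is a true martingale with vanishing expectation, not merely a local martingale. This holds because $W\in C^{2,2}_b$ makes $\tfrac{\delta}{\delta\mu}W$ bounded, $\gamma$ is bounded, and $E\int_t^T\mu_u(\mathbb{R})\,du<\infty$, so the integrand lies in $\mathscr{P}_\mathcal{M}$ with finite $L^2$-norm. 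Second, since $W$ is only $C^{1,(2,2)}_b$ on $[0,T)$ and merely continuous at $T$, Corollary \ref{corollary2} must be applied on $[t,s]$ with $s<T$ and then passed to the limit $s\uparrow T$; here the continuity of $W$, the boundedness of the coefficients, and the finiteness of $E[\sup_{u\le T}\mu_u(\mathbb{R})^p]$ (noted after the definition of the value function) supply the required dominated convergence. The existence and admissibility of $\mathbb{P}^*$ together with the measurability of $\alpha^*$ invoked in part (ii) are exactly the standing hypotheses, so no separate construction is needed.
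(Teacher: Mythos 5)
Your proposal follows the same route as the paper's own proof: use Remark \ref{remark1} and Proposition \ref{proposition3} to realize any $\mathbb{P}\in\mathscr{P}_{t,\lambda}$ as a weak solution of Equation \ref{equation1}, apply the time-dependent It\^o formula of Corollary \ref{corollary2} to $W$, take expectations so the martingale-measure term vanishes, insert $f$ and bound the integrand below by its pointwise infimum, and invoke the supersolution inequality to get $E^{\mathbb{P}}[W(s,\mu_s)-W(t,\lambda)]\geq -E^{\mathbb{P}}[\int_t^s\int_\mathbb{R} f(x,\mu_u,\alpha_u(x))\mu_u(dx)du]$, then let $s$ go to $T$ and take the infimum over $\mathbb{P}$. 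Your part (i) is correct, and is in fact more careful than the paper on two points it glosses over: the verification that the Walsh integral is a true martingale (square integrability of the bounded integrand against the intensity $\mu_u(dx)du$), and the fact that $W$ is only $C^{1,(2,2)}_b$ on $[0,T)$, so It\^o must be applied on $[t,s]$ with $s<T$ and the terminal time reached by continuity and dominated convergence.

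In part (ii), however, your argument hinges on the assertion that the HJB relation ``holds with equality for the optimal selector,'' and this is not among the stated hypotheses. Part (ii) adds only $W(T,\cdot)=g$, the attainment condition \ref{equation24}, and the existence of an admissible $\mathbb{P}^*$ realizing $\alpha^*$; together with part (i) you still only know $\frac{\partial}{\partial t}W+\int_\mathbb{R}\inf_{a\in A}[\cdots]\,\lambda(dx)\geq 0$. Running your computation under $\mathbb{P}^*$ with this one-sided bound produces exactly the same inequality as in part (i), namely $W(t,\lambda)\leq J(t,\lambda,\mathbb{P}^*)$, and never the reverse inequality $W(t,\lambda)\geq J(t,\lambda,\mathbb{P}^*)\geq V(t,\lambda)$ that is needed to force $W=V$ and the optimality of $\alpha^*$. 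Closing the argument requires the additional hypothesis that $W$ solves Equation \ref{equation22} with equality (as in Theorem \ref{theorem3}); then the drift under $\mathbb{P}^*$ is identically zero and $W(t,\lambda)=J(t,\lambda,\mathbb{P}^*)$. To be fair, the paper's own proof of (ii) has the same lacuna --- it derives only the inequality \ref{equation26} with $\alpha^*$ in place of $\alpha$ and concludes ``Hence $W=V$'' --- so your reconstruction faithfully mirrors the paper, but in both cases the equality hypothesis must be made explicit for part (ii) to be a theorem.
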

    \begin{proof}
        \begin{itemize}
            \item [(i)]
                Given any $(t,\lambda)$ and $\mathbb{P}\in\mathscr{P}_{t,\lambda}$ such that $$\mathbb{P}(\{(\mu,m):\exists\alpha, m(du,dx,da )=du\mu_u(dx)\delta_{\alpha_u(x)}(da)\})=1,$$
                 by Remark\ref{remark1}, Proposition\ref{proposition3} and Corollary\ref{corollary2},
                \begin{equation}\label{equation25}
                    \begin{aligned}
                     E^\mathbb{P}[W(s,\mu_s)-W(t,\lambda)]&=E^\mathbb{P}[\int_t^s\frac{\partial}{\partial t}W(u,\mu_u)du\\
                     &+\int_t^s \int_\mathbb{R} L\frac{\delta}{\delta \mu} W(u, 
                    \mu_u,\cdot)(x,\mu_u,\alpha_u(x))\mu_u(dx)du\\
                    &+\frac{1}{2}\int_t^s\int_\mathbb{R} \gamma(x,\mu_u,\alpha_u(x)) \frac{\delta^2}{\delta \mu^2}W(u, \mu_u,x,x)\mu_u(dx)du]
                    \end{aligned}
                \end{equation}
                Combining \ref{equation24} and \ref{equation25}, we have
                \begin{equation}\label{equation26}
                    \begin{aligned}
                        E^\mathbb{P}[W(s,\mu_s)-W(t,\lambda)]&\geq E^\mathbb{P}[\int_t^s -f(x,\mu_u,\alpha_u(x))\mu_u(dx)du] 
                    \end{aligned}
                \end{equation}
                Let $s=T$ we get $W\leq V$.
            \item [(ii)]
                With additional condition, \ref{equation26} transforms into 
                \begin{equation}
                    E^\mathbb{P}[W(s,\mu_s)-W(t,\lambda)]\geq E^\mathbb{P}[\int_t^s -f(x,\mu_u,\alpha^*(u,\mu_u,x))\mu_u(dx)du].
                \end{equation}
                Hence $W=V$.
        \end{itemize}
    \end{proof}

\section{Viscosity solutions for some measure-valued control problem}\label{sec5}

In this section, we give a definition of the viscosity solution of the HJB equation evolving derivatives of finite measures. Then we prove that the value function is the viscosity solution and the uniqueness of this type of viscosity solution in a special case when there is no second partial derivative with respect to the measure, that is $\gamma\equiv 0$ in our setting.

First we give another definition of the linear derivative.

\begin{definition}\label{definition4}
    For a function $f: M(\mathbb{R})\to\mathbb{R}$, the linear derivative of $f$ at $\mu$ (if exists) is a continuous and bounded function $\frac{\partial}{\partial \mu} f: \mathbb{R}\to\mathbb{R}$ such that
    \begin{equation}
        f(\mu+m)=f(\mu)+\int_\mathbb{R}\frac{\partial}{\partial \mu}f(x)\mu(dx)+o(\|m\|),
    \end{equation}
    where $\|m\|:=d(m,0)$.
    Moreover, if the linear derivative of $f$ exists at each point in $M(\mathbb{R})$, we denote $f\in C^1_b(M(\mathbb{R}))$ and we use $\frac{\partial}{\partial\mu}f(\mu,\cdot)$ to denote its linear derivative at $\mu$.
\end{definition}
\begin{remark}
    Compared with the linear derivative of $L^2$-probability measures, we can't introduce a distance that resembles the Wasserstein distance defined on $L^2$-probability measures and using the optimal transport technique to prove the equivalence between Definition \ref{definition3} and Definition \ref{definition4}. But it's obvious that Definition \ref{definition4} is more strict.
\end{remark}
The Hamiltonian for our stochastic problem is a function
$$H:M(\mathbb{R})\times C^1_b(\mathbb{R})\times C_b(\mathbb{R})\times C^2_b(\mathbb{R}^2)\to\mathbb{R}$$ 
defined as
\begin{equation}
    \begin{aligned}
        H(\mu,p,M,r)=\int_{\mathbb{R}}\inf_{a\in A}[b(x,\mu,a)p(x)+\frac{1}{2}\sigma(x,\mu,a)^2M(x)\\
        +\frac{1}{2}\gamma(x,\mu,a)r(x,x)+f(x,\mu,a)]\mu(dx).
    \end{aligned}
\end{equation}

\begin{definition}
    A continuous function $U:[0,T]\times M(\mathbb{R})$ is called a viscosity subsolution (respectively supersolution) of 
    \begin{equation}\label{equation27}
    \begin{aligned}
        &-\frac{\partial}{\partial t}V(t,\mu)-\beta V(t,\lambda)\\
        &\qquad\qquad -H(\mu,\frac{\partial}{\partial x}\frac{\partial}{\partial\mu}V(t,\mu,x),\frac{\partial^2}{\partial x^2}\frac{\partial}{\partial\mu}V(t,\mu,x),\frac{\partial^2}{\partial\mu^2}V(t,\mu,x,x))=0
    \end{aligned}
    \end{equation}
    if 
    \begin{equation}
    \begin{aligned}
        &-\frac{\partial}{\partial t}\varphi(t,\mu)-\beta V(t,\lambda)\\
        &\qquad\qquad -H(\mu,\frac{\partial}{\partial x}\frac{\partial}{\partial\mu}\varphi(t,\mu,x),\frac{\partial^2}{\partial x^2}\frac{\partial}{\partial\mu}\varphi(t,\mu,x),\frac{\partial^2}{\partial\mu^2}\varphi(t,\mu,x,x))\leq 0
    \end{aligned}
    \end{equation}
    (respectively $\geq$) for any $(t,\mu)\in [0,T]\times M(\mathbb{R})$ and $\varphi\in C^{1,(2,2)}_b$ such that $(t,\mu)$ is a local maximum (respectively minimum) point of $U-\varphi$. And $U$ is called a viscostiy solution of Equation \ref{equation27} if it is both a viscosity subsolution and supersolution of Equation \ref{equation27}.
\end{definition}

With the definition of viscosity solutions above, the smooth assumption of Theorem \ref{theorem3} can be relaxed to the assumption of continuity and the viscosity version of Theorem \ref{theorem4} holds true when the term of the second order derivative of the measure vanishes.

\begin{theorem}
    If the value function $V$ is continuous, the $V$ is the viscosity solution of Equation \ref{equation27} with terminal condition $V(t,\mu)=g(\mu)$.
\end{theorem}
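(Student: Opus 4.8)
The plan is to verify the three requirements of a viscosity solution separately: the terminal condition, the subsolution inequality, and the supersolution inequality. The terminal condition $V(T,\mu)=g(\mu)$ is immediate from the definitions of $J$ and $V$: at $t=T$ the running-cost integral $\int_T^T\cdots\,du$ vanishes and $\mu_T=\lambda_T$ $\mathbb{P}$-almost surely for every $\mathbb{P}\in\mathscr{P}_{T,\lambda}$, so $J(T,\lambda,\mathbb{P})=E^{\mathbb{P}}[g(\mu_T)]=g(\lambda_T)$ and hence $V(T,\lambda)=g(\lambda_T)$, which by Remark \ref{remark2} reads $V(T,\mu)=g(\mu)$. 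The two interior inequalities both follow the classical template: combine the dynamic programming principle (Theorem \ref{theorem2}) with the Itô formula (Corollary \ref{corollary2}) applied to a smooth test function, and then pass to the limit $s\downarrow t_0$.

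For the subsolution property, fix $(t_0,\mu_0)$ and $\varphi\in C^{1,(2,2)}_b$ such that $V-\varphi$ has a local maximum at $(t_0,\mu_0)$, normalized so that $V(t_0,\mu_0)=\varphi(t_0,\mu_0)$ and $V\le\varphi$ on a neighborhood. For any fixed feedback $a_0:\mathbb{R}\to A$, choose (as in the proof of Theorem \ref{theorem3}) a rule $\mathbb{P}\in\mathscr{P}_{t_0,\mu_0}$ with $m(du,dx,da)=du\,\mu_u(dx)\delta_{a_0(x)}(da)$. Introduce the exit time $\tau$ of $(u,\mu_u)$ from the neighborhood and apply Theorem \ref{theorem2} with the stopping time $s\wedge\tau$; using $V\le\varphi$ and $V(t_0,\mu_0)=\varphi(t_0,\mu_0)$ to replace $V$ by $\varphi$, then expanding $E^{\mathbb{P}}[\varphi(s\wedge\tau,\mu_{s\wedge\tau})-\varphi(t_0,\mu_0)]$ by Corollary \ref{corollary2} (the martingale-measure term having zero expectation), one obtains
\begin{equation}
E^{\mathbb{P}}\Big[\int_{t_0}^{s\wedge\tau}\!\Big(\tfrac{\partial}{\partial t}\varphi(u,\mu_u)+\int_{\mathbb{R}}\big[L\tfrac{\delta}{\delta\mu}\varphi+\tfrac12\gamma\tfrac{\delta^2}{\delta\mu^2}\varphi+f\big](x,\mu_u,a_0(x))\mu_u(dx)\Big)du\Big]\ge 0.
\end{equation}
Dividing by $s-t_0$ and letting $s\downarrow t_0$ yields the pointwise inequality at $(t_0,\mu_0)$ for this $a_0$; since $a_0$ is arbitrary and the pointwise infimum over the compact $A$ is attained and measurably selectable, I can replace the integrand by $\inf_{a\in A}[\cdots]$, which is precisely the subsolution inequality for the Hamiltonian $H$.

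For the supersolution property, fix $(t_0,\mu_0)$ and $\varphi$ with $V-\varphi$ attaining a local minimum at $(t_0,\mu_0)$, normalized so $V\ge\varphi$ locally with equality at the point. Here I use the opposite direction of the dynamic programming principle: for each $\varepsilon>0$ and $s>t_0$ there is a near-optimal $\mathbb{P}^\varepsilon\in\mathscr{P}_{t_0,\mu_0}$, with some feedback $\alpha$, satisfying $V(t_0,\mu_0)+\varepsilon(s-t_0)\ge E^{\mathbb{P}^\varepsilon}[\int_{t_0}^{s\wedge\tau}f\,\mu_u(dx)du+V(s\wedge\tau,\mu_{s\wedge\tau})]$. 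Replacing $V$ by $\varphi$ and applying Corollary \ref{corollary2} as before, then using the pointwise bound $[\cdots]_{a=\alpha_u(x)}\ge\inf_{a\in A}[\cdots]$ inside the integral, dividing by $s-t_0$, and letting $s\downarrow t_0$ and then $\varepsilon\downarrow 0$, gives $\tfrac{\partial}{\partial t}\varphi(t_0,\mu_0)+\int\inf_{a\in A}[\cdots]\mu_0(dx)\le 0$, the required supersolution inequality.

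The main obstacle is the localization in the infinite-dimensional state space. Since $\mu$ takes values in $M(\mathbb{R})$ with the weak topology rather than in a Euclidean ball, I must work with the exit time $\tau$ from a $d$-neighborhood of $\mu_0$ and verify that $\tau>t_0$ almost surely; this uses the right-continuity of the càdlàg paths, which guarantees $\mu_u\to\mu_0$ as $u\downarrow t_0$, so that $s\wedge\tau=s$ on an event of probability tending to $1$ and $E^{\mathbb{P}}[(s\wedge\tau-t_0)]/(s-t_0)\to 1$. The second delicate point is the interchange of the limit $s\downarrow t_0$ with the expectation and the infimum: this is justified by the boundedness and joint continuity of $b,\sigma,\gamma,f$, the boundedness of the derivatives of $\varphi\in C^{1,(2,2)}_b$, and the dominated convergence theorem, together with the compactness of $A$ (so that $a\mapsto[\cdots]$ attains its infimum continuously and a measurable selector exists), which is what legitimizes pushing $\inf_{a\in A}$ through the limiting integral in both the sub- and supersolution arguments.
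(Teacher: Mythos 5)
Your proposal is correct and takes exactly the route the paper intends: the paper's own proof is a single sentence deferring to "the classical case," and your write-up --- the dynamic programming principle of Theorem \ref{theorem2} combined with the It\^o formula of Corollary \ref{corollary2}, using an arbitrary feedback control for the subsolution half and an $\varepsilon$-optimal control for the supersolution half, with localization via exit times and dominated convergence to pass to the limit $s\downarrow t_0$ --- is precisely that classical argument (mirroring the proof of Theorem \ref{theorem3}) carried out in the measure-valued setting. No substantive discrepancy with the paper's approach.
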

\begin{proof}
    The proof of this theorem is entirely analogous to the classical case.
\end{proof}

\begin{theorem}[Strong comparison theorem]
    When $\gamma\equiv 0$, for any viscosity subsolution $U$ (supersolution $W$) of Equation \ref{equation27} with polynomial growth condition and terminal condition $U(t,\mu)=W(t,\mu)=g(\mu)$, we have $U\leq W$. Therefore, $V$ is the only viscosity solution of Equation \ref{equation27} with terminal conditon $V(T,\mu)=g(\mu)$.
\end{theorem}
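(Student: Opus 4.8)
The plan is to run the doubling-of-variables method of Crandall--Ishii--Lions, with the weak metric $d$ on $M(\mathbb{R})$ playing the role of the Euclidean penalization, and to extract the contradiction from the discount term $\beta$. Suppose, for contradiction, that
\[
m := \sup_{[0,T]\times M(\mathbb{R})}\big(U-W\big) > 0 .
\]
Since $U(T,\cdot)=W(T,\cdot)=g$ and both functions are continuous with polynomial growth, the supremum cannot be achieved at $t=T$. For $\epsilon,\eta>0$ I would study the penalized functional
\[
\Phi_{\epsilon,\eta}(t,\mu,s,\nu)
 = U(t,\mu)-W(s,\nu)
 -\frac{1}{2\epsilon}\big(d(\mu,\nu)^2+|t-s|^2\big)
 -\eta\big(\psi(\mu)+\psi(\nu)\big),
\]
where $\psi$ is a smooth gauge on $M(\mathbb{R})$ growing faster than the polynomial bound on $U,W$. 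The first key observation is that, because $\gamma\equiv 0$, the Hamiltonian $H(\mu,p,M,r)=\int_{\mathbb{R}}\inf_{a}[\,b\,p+\tfrac12\sigma^2 M+f\,]\mu(dx)$ no longer depends on the slot $r$; hence only \emph{first-order} information in the measure variable is needed, and the full second-order (Ishii-type) doubling in $\mu$ can be avoided. This is precisely what makes the restriction $\gamma\equiv 0$ tractable.

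The second step is to produce a maximizer $(t_\epsilon,\mu_\epsilon,s_\epsilon,\nu_\epsilon)$ of $\Phi_{\epsilon,\eta}$. This is the first genuine obstacle, since $M(\mathbb{R})$ with the weak topology is not locally compact: a sublevel set $\{d(\cdot,0)\le R\}$ has bounded total mass (as $\varphi_0\equiv 1$ forces $d(\mu,0)^2\ge \mu(\mathbb{R})^2$) but need not be tight, so the gauge $\psi$ alone does not restore compactness. I would remedy this either by adding a tightness-enforcing moment term (whose spatial derivatives remain compatible with the bounded coefficients $b,\sigma$), or by invoking a smooth variational principle of Borwein--Preiss/Stegall type to obtain approximate extrema directly on the Polish space $M(\mathbb{R})$. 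Once a maximizer exists, the standard estimates give $|t_\epsilon-s_\epsilon|\to 0$, $d(\mu_\epsilon,\nu_\epsilon)\to 0$, $\tfrac1\epsilon d(\mu_\epsilon,\nu_\epsilon)^2\to 0$ and $U(t_\epsilon,\mu_\epsilon)-W(s_\epsilon,\nu_\epsilon)\to m$ as $\epsilon,\eta\to 0$.

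Next I apply the viscosity subsolution inequality to $U$ and the supersolution inequality to $W$ at the two contact points. The central algebraic point is that, using the explicit metric,
\[
\frac{\delta}{\delta\mu}\Big[\tfrac12\,d(\cdot,\nu)^2\Big](\mu,x)
 =\sum_{k}\frac{1}{2^{k}q_k}\langle\varphi_k,\mu-\nu\rangle\,\varphi_k(x),
\]
both test functions contribute the \emph{same} measure-derivative
\(
P_\epsilon(x)=\frac1\epsilon\sum_{k}\frac{1}{2^{k}q_k}\langle\varphi_k,\mu_\epsilon-\nu_\epsilon\rangle\varphi_k(x)
\)
at their respective contact points, feeding the common pair $(P_\epsilon',P_\epsilon'')$ into $H$ from both sides; moreover the time derivatives $\tfrac1\epsilon(t_\epsilon-s_\epsilon)$ cancel upon subtraction. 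Subtracting the two inequalities then leaves $\beta\big(U(t_\epsilon,\mu_\epsilon)-W(s_\epsilon,\nu_\epsilon)\big)$ balanced against the Hamiltonian difference $H(\mu_\epsilon,P_\epsilon',P_\epsilon'')-H(\nu_\epsilon,P_\epsilon',P_\epsilon'')$.

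The main obstacle is controlling this Hamiltonian difference. I would split it into a coefficient part and a measure part. For the coefficient part one uses the uniform Lipschitz dependence of $b,\sigma,f$ on the measure together with the crucial normalization $q_k\ge\max(\|\varphi_k'\|_\infty^2,\|\varphi_k''\|_\infty^2)$: by Cauchy--Schwarz this yields $\int_{\mathbb{R}}(|P_\epsilon'|+|P_\epsilon''|)\,(\mu_\epsilon+\nu_\epsilon)(dx)\le \tfrac{C}{\epsilon}\,d(\mu_\epsilon,\nu_\epsilon)$, so this part is $O\big(\tfrac1\epsilon d(\mu_\epsilon,\nu_\epsilon)^2\big)\to 0$. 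The delicate part is the measure difference $\int_{\mathbb{R}}\inf_{a}[\,b\,P_\epsilon'+\tfrac12\sigma^2 P_\epsilon''+f\,]\,(\mu_\epsilon-\nu_\epsilon)(dx)$: the integrand is a nonlinear ($\inf_a$) function of $x$ of size $\tfrac1\epsilon d(\mu_\epsilon,\nu_\epsilon)$, and integrating it against the signed measure $\mu_\epsilon-\nu_\epsilon$ is not immediately controlled by the weak metric. Here I would exploit that $P_\epsilon',P_\epsilon''$ are themselves weakly-controlled combinations of the $\varphi_k',\varphi_k''$ and re-use the uniform approximation machinery of Section~\ref{sec3} (Propositions~\ref{proposition1}--\ref{proposition2}) to reduce the integrand to the linear span of the $\varphi_k$ with coefficients bounded in terms of $d(\mu_\epsilon,\nu_\epsilon)$, thereby bounding this term by a modulus vanishing with $\tfrac1\epsilon d(\mu_\epsilon,\nu_\epsilon)^2$. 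Granting this estimate, the Hamiltonian difference tends to $0$, and letting $\epsilon,\eta\to 0$ the discount term forces $\beta\,m\le 0$, contradicting $\beta>0$ and $m>0$. Hence $U\le W$; applying this with the roles reversed, and noting $V$ is a viscosity solution with $V(T,\cdot)=g$, gives uniqueness of $V$.
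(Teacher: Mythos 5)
Your overall strategy---doubling of variables with the metric $d$, a growth penalization, Lipschitz control of the Hamiltonian difference, and a contradiction extracted from the discount term---is the same as the paper's, and you correctly identify that $\gamma\equiv 0$ removes any need for second-order information in the measure variable. But there are genuine gaps at exactly the two points you flag as ``obstacles,'' and these are where the paper's proof does its real work. First, existence of maximizers: the paper resolves the lack of compactness not by moment terms or a variational principle, but by composing with the cutoff map $\mu\mapsto\mu^N$ (setting $U^N(t,\mu):=U(t,\mu^N)$, $W^N(t,\mu):=W(t,\mu^N)$, and replacing $H$ by $H^N$), so that the functions being compared depend only on the restriction of $\mu$ to a compact interval; together with the growth penalization this confines the supremum to the sets $K_{m,n}=\{\mu:\|\mu\|\leq m,\ \mu([-n,n]^c)=0\}$, which are genuinely compact in the weak topology (bounded mass plus uniformly compact support gives tightness). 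Neither of your proposed remedies is carried out, and the Borwein--Preiss route faces a concrete obstruction you do not address: the metric $d$ is not complete on $M(\mathbb{R})$ (take $\mu_n=\delta_{x_n}$ with $x_n\to\infty$ chosen by a diagonal argument so that $\varphi_k(x_n)$ converges for every $k$; this sequence is $d$-Cauchy but has no limit in $M(\mathbb{R})$, since mass escapes while $\langle\varphi_0,\mu_n\rangle=1$), so the smooth variational principle cannot be invoked for $d$ as stated, and passing to an equivalent complete metric destroys the quadratic structure that makes the penalization an admissible test function.

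Second, your contradiction requires $\beta>0$, but the uniqueness assertion concerns the value function, whose HJB equation is precisely the case $\beta=0$ of Equation \ref{equation27}. The paper handles this by the rescaling $\tilde U=e^{\theta t}U$, $\tilde W=e^{\theta t}W$, which converts the problem into one with effective discount $\beta+\theta>0$; without this step your argument proves nothing in the case that is actually needed. Finally, for the Hamiltonian difference, you defer the ``measure part'' $\int\inf_a[\,\cdot\,]\,d(\mu_\epsilon-\nu_\epsilon)$ to the approximation machinery of Propositions \ref{proposition1}--\ref{proposition2}; but those results give \emph{uniform} approximation with no modulus expressed in terms of $d(\mu_\epsilon,\nu_\epsilon)$, so they cannot yield the bound $O\bigl(\frac{1}{\epsilon}d(\mu_\epsilon,\nu_\epsilon)^2\bigr)$ you need: a uniform approximation error $\delta$ integrated against $\mu_\epsilon-\nu_\epsilon$ contributes $\delta$ times the total mass, an additive constant rather than a quantity vanishing with $\frac{1}{\epsilon}d(\mu_\epsilon,\nu_\epsilon)^2$. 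The paper instead estimates this term directly (Equations \ref{equation31} and \ref{equation32}), exploiting that the test-function derivative $p_\varepsilon$ is an explicit series in the $\varphi_k'$ and using Cauchy--Schwarz with the normalization $q_k\geq\max\{\|\varphi_k'\|_\infty^2,\|\varphi_k''\|_\infty^2\}$; whatever one thinks of the details there, it is a rate-producing computation of a different kind from the density argument you propose.
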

\begin{proof}
The proof follows the spirit of Theorem 4.4.3 and Theorem 4.4.4 in \cite{bib11},
In the case when $\gamma\equiv 0$, with some abuse of notation, we let the Hamiltonian 
$H$ be defined as
\begin{equation}
    \begin{aligned}
        H(\mu,p,M)=\int_{\mathbb{R}}\inf_{a\in A}[b(x,\mu,a)p(x)+\frac{1}{2}\sigma(x,\mu,a)^2M(x)+f(x,\mu,a)]\mu(dx).
    \end{aligned}
\end{equation}
     we first consider the HJB equation in the form
    \begin{equation}
    \begin{aligned}
        &-\frac{\partial}{\partial t}V(t,\mu)+\beta V(t,\mu)\\
        &\qquad -H(\mu,\frac{\partial}{\partial x}\frac{\partial}{\partial\mu}V(t,\mu,x),\frac{\partial^2}{\partial x^2}\frac{\partial}{\partial\mu}V(t,\mu,x))=0.
    \end{aligned}
    \end{equation}
\end{proof}
Note that Equation \ref{equation27} is a special case when $\beta=0$. However, we only need to consider the case when $\beta>0$ without loss of generality. Because if we let $\tilde{U}(t,\mu)=e^{\theta t}U(t,\mu)$ and $\tilde{W}(t,\mu)=e^{\theta t}W(t,\mu)$, then $\tilde{U}$ (respectively $\tilde{W}$) is the viscosity subsolution of 
\begin{equation}
\begin{aligned}
     &-\frac{\partial}{\partial t}V(t,\mu)+(\beta+\theta) V(t,\mu)\\
        &\qquad -\tilde{H}(\mu,\frac{\partial}{\partial x}\frac{\partial}{\partial\mu}V(t,\mu,x),\frac{\partial^2}{\partial x^2}\frac{\partial}{\partial\mu}V(t,\mu,x))=0,
\end{aligned}
\end{equation}
where $\tilde{H}$ is the Hamiltonian with $f(t,\mu,a)$ replaced by $e^{\theta t}f(t,\mu,a)$. Since we can take $\theta$ large enough so that $\beta+\theta>0$ and $U\leq W$ if and only if $\tilde{U}\leq \tilde{W}$, we can assume $\beta>0$ without loss of generality.

Since a bounded close set in $M(\mathbb{R})$ is not necessary compact, we use some localization method to overcome this difficulty. Let $U^N(t,\mu)=U(t,\mu^N)$, $W^N(t,\mu)=W(t,\mu^N)$,
\begin{equation}
\begin{aligned}
    &H^N(\mu,p,M)=\int_{\mathbb{R}}\inf_{a\in A}[b(x,\mu,a)p(x)+\frac{1}{2}\sigma(x,\mu,a)^2M(x)\\
    &\qquad\qquad\qquad\qquad\qquad+f(x,\mu,a)]\rho^N(x)\mu(dx).
\end{aligned}
\end{equation}
Then $U^N$ (respectively $W^N$) is the viscosity subsolution (respectively supersolution) of 
\begin{equation}\label{equation28}
    \begin{aligned}
     &-\frac{\partial}{\partial t}V(t,\mu)+\beta V(t,\mu)\\
        &\qquad -H^N(\mu,\frac{\partial}{\partial x}\frac{\partial}{\partial\mu}V(t,\mu,x),\frac{\partial^2}{\partial x^2}\frac{\partial}{\partial\mu}V(t,\mu,x))=0,
    \end{aligned}
\end{equation}
If we can prove $U^N\leq W^N$, we can obtain $U\leq W$ from the continuity assumption.
Since $U,W$ satisfies the polynomial growth condition, we can find a $p>1$ such that
\begin{equation}
    \sup_{[0,t]\times M(\mathbb{R})}\frac{U^N(t,\mu)+W^N(t,\mu)}{1+\|\mu^N\|^p}<\infty.
\end{equation}
Define $\phi(t,\mu)=e^{-\theta t}(1+\|\mu\|^{2p});=e^{-\theta t}\psi(\mu)$, it's obvious that $\phi$ is smooth. Notice that
\begin{equation}
    \begin{aligned}
     &-\frac{\partial}{\partial t}\phi(t,\mu)+\beta \phi(t,\mu)
        -H^N(\mu,\frac{\partial}{\partial x}\frac{\partial}{\partial\mu}\phi(t,\mu,x),\frac{\partial^2}{\partial x^2}\frac{\partial}{\partial\mu}\phi(t,\mu,x))\\
        =&e^{-\theta t}[(\beta+\theta)\psi(\mu)-H^N(\mu,\frac{\partial}{\partial x}\frac{\partial}{\partial\mu}\psi(\mu,x),\frac{\partial^2}{\partial x^2}\frac{\partial}{\partial\mu}\psi(\mu,x))]\\
    \end{aligned}
\end{equation}
By direct calculation we have
\begin{equation}
\begin{aligned}
    \int_{\mathbb{R}}\frac{\partial}{\partial x}\frac{\partial}{\partial \mu}\psi(\mu,x)\mu(dx)&=2p\|\mu\|^{2p-2}\sum_{k=0}^\infty\frac{1}{2^k q_k} \langle\varphi_k,\mu\rangle\langle\varphi_k',\mu\rangle \\
    &\leq 2p\|\mu\|^{2p-1}(\sum_{k=0}^\infty\frac{1}{2^k q_k} \langle\varphi_k',\mu\rangle)^{\frac{1}{2}}\\
    &\leq 2p\|\mu\|^{2p-1}(\sum_{k=0}^\infty\frac{1}{2^k} \langle 1,\mu\rangle)^{\frac{1}{2}}\\
    &=4p\|\mu\|^{2p-1}(\langle\varphi_0,\mu\rangle)^{\frac{1}{2}}\\
    &\leq 4p\psi.
\end{aligned}
\end{equation}
For the same reason $\int_{\mathbb{R}}\frac{\partial^2}{\partial x^2}\frac{\partial}{\partial \mu}\psi(\mu,x)\mu(dx)\leq 4p\psi$. Therefore there exists a constant $C$ such that 
\begin{equation}
\begin{aligned}
    &e^{-\theta t}[(\beta+\theta)\psi(\mu)-H^N(\mu,\frac{\partial}{\partial x}\frac{\partial}{\partial\mu}\psi(\mu,x),\frac{\partial^2}{\partial x^2}\frac{\partial}{\partial\mu}\psi(\mu,x))]\\
    \geq& (\beta+\theta-C)\phi(\mu).
\end{aligned}
\end{equation}
Then $\phi$ is a viscosity supersolution of Equation \ref{equation28} when choosing $\theta$ large enough. So for any $\varepsilon>0$, $W^N+\varepsilon \phi$ is a viscosity supersolution of Equation \ref{equation28}. If for any $\varepsilon$, $U^N-W^N-\varepsilon\phi\leq 0$, then the proof is done. Now suppose there exists some $\varepsilon_0$ such that $\sup \{U^N-W^N-\varepsilon_0\phi\}>0$. Define
$$K_{m,n}:=\{\mu\in M(\mathbb{R}):\|\mu\|\leq m,\mu([-n,n]^c)=0\}.$$
Then $K_{m,n}$ is compact with respect to the topology induced by $d$.
Since
\begin{equation}
    \lim_{\|\mu\|\to\infty}\sup_{[0,T]}\{U^N-W^N-\varepsilon_0\phi\}=-\infty,
\end{equation}
 $U^N(t,\mu)=U^N(t,\mu^N)$, $W^N(t,\mu)=W^N(t,\mu^N)$ and $\phi(t,\mu)\geq \phi(t,\mu^N)$,we have
 \begin{equation}
     \sup_{[0,T]\times M(E)}\{U^N-W^N-\varepsilon_0\phi\}=\sup_{[0,T]\times K_{m,n}}\{U^N-W^N-\varepsilon_0\phi\}>0
 \end{equation}
for some $m,n$.
Since $[0,T]\times K_{m,n}$ is compact, there is some $(\bar{t},\bar{\mu})\in [0,T]\times K{m,n}$ that reaches the supremum 
\begin{equation}\label{equation33}
    M:=\sup_{[0,T]\times M(E)}\{U^N-W^N-\varepsilon_0\phi\}>0.
\end{equation}
Let $\mathcal{O}$ be an open subset of $M(\mathbb{R})$ such that $K_{m,n}\subset \mathcal{O}\subset K_{m+1,\infty}$. Define a family of test functions 
$$\Phi_\varepsilon(t,s,\mu,\lambda)=U^N(t,\mu)-W^N(s,\lambda)-\varepsilon_0\phi(s,\lambda)-\phi_\varepsilon(t,s,\mu,\lambda),$$
where 
$$\phi_\varepsilon(t,s,\mu,\lambda)=\frac{1}{\varepsilon}(|t-s|^2+\|\mu-\lambda\|^2).$$
For the same reason we have
\begin{equation}
    M_\varepsilon:=\sup_{[0,T]^2\times \mathcal{O}^2}\Phi_\varepsilon=\sup_{[0,T]^2\times K_{m+1,n}^2}\Phi_\varepsilon.
\end{equation}
Hence there exists some $(t_\varepsilon,s_\varepsilon,\mu_\varepsilon,\lambda_\varepsilon)\in [0,T]^2\times K^2_{m+1,n}$ such that $$\Phi_\varepsilon(t_\varepsilon,s_\varepsilon,\mu_\varepsilon,\lambda_\varepsilon)=M_\varepsilon.$$
We get $M_\varepsilon\geq M$ by letting $s=t$, $\lambda=\mu$. Let $\varepsilon\to 0$, since $K^2_{m+1,n}$ is compact, we can find a subsequence of $(t_\varepsilon,s_\varepsilon,\mu_\varepsilon,\lambda_\varepsilon)$ converges to some $(t_0,s_0,\mu_0,\lambda_0)\in [0,T]^2\times K^2_{m+1,n}$. 
We must have $s_0=t_0$ and $\mu_0=\lambda_0$, otherwise $\phi_\varepsilon(t_\varepsilon,s_\varepsilon,\mu_\varepsilon,\lambda_\varepsilon)\to\infty$ as $\varepsilon\to 0$ but $U^N(t_\varepsilon,\mu_\varepsilon)-W^N(s_\varepsilon,\lambda_\varepsilon)-\varepsilon_0 \phi(s_\varepsilon,\lambda_\varepsilon)$ is bounded with respect to $\varepsilon$, which contradicts with the fact that $M_\varepsilon\geq M$. Therefore, $$M\leq \lim_{\varepsilon\to0}M_\varepsilon\leq M.$$ So $$\lim_{\varepsilon\to 0}\phi_\varepsilon(t_\varepsilon,s_\varepsilon,\mu_\varepsilon,\lambda_\varepsilon)=0.$$
Notice that $(t_\varepsilon,\mu_\varepsilon)$ is a local maximum of
$$U^N(t,\mu)-W^N(s_\varepsilon,\lambda_\varepsilon)-\varepsilon_0\phi(s_\varepsilon,\lambda_\varepsilon)-\phi_\varepsilon(t,s_\varepsilon,\mu,\lambda_\varepsilon).$$
Since $U^N$ is a subsolution of Equation \ref{equation28}, and $\phi(t,\mu)=\phi_\varepsilon(t,s_\varepsilon,\mu,\lambda_\varepsilon)$ is smooth, we obtain
\begin{equation}\label{equation29}
    \begin{aligned}
     &-\frac{2}{\varepsilon}(t_\varepsilon-s_\varepsilon)+\beta U^N(t_\varepsilon,\mu_\varepsilon)\\
        &\qquad -H^N(\mu_\varepsilon,\frac{2}{\varepsilon}\sum_{k=0}^\infty \frac{1}{2^k q_k}\langle \phi_k,\mu_\varepsilon-\lambda_\varepsilon\rangle\phi_k',\frac{2}{\varepsilon}\sum_{k=0}^\infty \frac{1}{2^k q_k}\langle \phi_k,\mu_\varepsilon-\lambda_\varepsilon\rangle\phi_k'')\leq 0.
    \end{aligned}
\end{equation}
Similarly, considering the viscosity supersolution $W^N-\varepsilon_0 \phi$, we have
\begin{equation}\label{equation30}
    \begin{aligned}
     &-\frac{2}{\varepsilon}(t_\varepsilon-s_\varepsilon)+\beta (W^N(s_\varepsilon,\lambda_\varepsilon)+\varepsilon_0 \phi(s_\varepsilon,\lambda_\varepsilon))\\
        &\qquad -H^N(\lambda_\varepsilon,\frac{2}{\varepsilon}\sum_{k=0}^\infty \frac{1}{2^k q_k}\langle \phi_k,\mu_\varepsilon-\lambda_\varepsilon\rangle\phi_k',\frac{2}{\varepsilon}\sum_{k=0}^\infty \frac{1}{2^k q_k}\langle \phi_k,\mu_\varepsilon-\lambda_\varepsilon\rangle\phi_k'')\leq 0.
    \end{aligned}
\end{equation}
Subtracting Equation \ref{equation29} by Equation \ref{equation30}, we get
\begin{equation}
    \begin{aligned}
        &\beta(U^N(t_\varepsilon,\mu_\varepsilon)-W^N(s_\varepsilon,\lambda_\varepsilon)-\varepsilon_0\phi(s_\varepsilon,\lambda_\varepsilon))\\
        \leq& H^N(\mu_\varepsilon,\frac{2}{\varepsilon}\sum_{k=0}^\infty \frac{1}{2^k q_k}\langle \phi_k,\mu_\varepsilon-\lambda_\varepsilon\rangle\phi_k',\frac{2}{\varepsilon}\sum_{k=0}^\infty \frac{1}{2^k q_k}\langle \phi_k,\mu_\varepsilon-\lambda_\varepsilon\rangle\phi_k'')\\
        &\qquad-H^N(\lambda_\varepsilon,\frac{2}{\varepsilon}\sum_{k=0}^\infty \frac{1}{2^k q_k}\langle \phi_k,\mu_\varepsilon-\lambda_\varepsilon\rangle\phi_k',\frac{2}{\varepsilon}\sum_{k=0}^\infty \frac{1}{2^k q_k}\langle \phi_k,\mu_\varepsilon-\lambda_\varepsilon\rangle\phi_k'').
    \end{aligned}
\end{equation}
We denote
$$p_\varepsilon(x):=\frac{2}{\varepsilon}\sum_{k=0}^\infty \frac{1}{2^k q_k}\langle \phi_k,\mu_\varepsilon-\lambda_\varepsilon\rangle\phi_k'(x)$$
and
$$M_\varepsilon(x):=\frac{2}{\varepsilon}\sum_{k=0}^\infty \frac{1}{2^k q_k}\langle \phi_k,\mu_\varepsilon-\lambda_\varepsilon\rangle\phi_k''(x)$$
for simplicity. By direct calculation we have
\begin{equation}\label{equation31}
\begin{aligned}
    &\left|\int_{\mathbb{R}}b(x,\mu_\varepsilon,a)p_\varepsilon(x)\mu_\varepsilon(dx)-\int_{\mathbb{R}}b(x,\lambda_\varepsilon,a)p_\varepsilon(x)\lambda_\varepsilon(dx)\right|\\
    \leq&\left|\int_{\mathbb{R}}b(x,\mu_\varepsilon,a)p_\varepsilon(x)\mu_\varepsilon(dx)-\int_{\mathbb{R}}b(x,\mu_\varepsilon,a)p_\varepsilon(x)\lambda_\varepsilon(dx)\right|\\
    &+\left|\int_{\mathbb{R}}b(x,\mu_\varepsilon,a)p_\varepsilon(x)\lambda_\varepsilon(dx)-\int_{\mathbb{R}}b(x,\lambda_\varepsilon,a)p_\varepsilon(x)\lambda_\varepsilon(dx)\right|\\
    \leq& |b(x,\mu_\varepsilon,a)| \left|\langle p_\varepsilon, \mu_\varepsilon-\lambda_\varepsilon\rangle\right|+\|\lambda_\varepsilon\|\cdot\|(b(x,\mu_\varepsilon,a)-b(x,\lambda_\varepsilon,a)p_\varepsilon(x)\|_\infty\\
    \leq& C_1 \left|\frac{2}{\varepsilon}\sum_{k=0}^\infty\frac{1}{2^k q_k}\langle\phi_k,\mu_\varepsilon-\lambda_\varepsilon\rangle\langle\phi_k',\mu_\varepsilon-\lambda_\varepsilon\rangle\right|\\
    &\qquad+C_2\|\mu_\varepsilon-\lambda_\varepsilon\|\frac{2}{\varepsilon}\sum_{k=0}^\infty\frac{1}{2^k}\left|\langle 1,\mu_\varepsilon-\lambda_\varepsilon\rangle\right|\\
    \leq & C_3\frac{2}{\varepsilon}\|\mu_\varepsilon-\lambda_\varepsilon\|^2 ,
\end{aligned}
\end{equation}
where $C_1, C_2, C_3$ are constants independent of $\varepsilon$. Similarly, we have
\begin{equation}\label{equation32}
    \left|\int_{\mathbb{R}}\sigma^2(x,\mu_\varepsilon,a)M_\varepsilon(x)\mu_\varepsilon(dx)-\int_{\mathbb{R}}\sigma^2(x,\lambda_\varepsilon,a)M_\varepsilon(x)\lambda_\varepsilon(dx)\right|\leq C_4\frac{2}{\varepsilon}\|\mu_\varepsilon-\lambda_\varepsilon\|^2
\end{equation}
for some constant $C_4$. Combining Equation \ref{equation31} and Equation \ref{equation32}, we have
\begin{equation}
    \begin{aligned}
        &\beta(U^N(t_\varepsilon,\mu_\varepsilon)-W^N(s_\varepsilon,\lambda_\varepsilon)-\varepsilon_0\phi(s_\varepsilon,\lambda_\varepsilon))\\
        \leq& H^N(\mu_\varepsilon,\frac{2}{\varepsilon}\sum_{k=0}^\infty \frac{1}{2^k q_k}\langle \phi_k,\mu_\varepsilon-\lambda_\varepsilon\rangle\phi_k',\frac{2}{\varepsilon}\sum_{k=0}^\infty \frac{1}{2^k q_k}\langle \phi_k,\mu_\varepsilon-\lambda_\varepsilon\rangle\phi_k'')\\
        &\qquad-H^N(\lambda_\varepsilon,\frac{2}{\varepsilon}\sum_{k=0}^\infty \frac{1}{2^k q_k}\langle \phi_k,\mu_\varepsilon-\lambda_\varepsilon\rangle\phi_k',\frac{2}{\varepsilon}\sum_{k=0}^\infty \frac{1}{2^k q_k}\langle \phi_k,\mu_\varepsilon-\lambda_\varepsilon\rangle\phi_k'')\\
        \leq& C_5\phi_\varepsilon(t_\varepsilon,s_\varepsilon,\mu_\varepsilon,\lambda_\varepsilon),
    \end{aligned}
\end{equation}
where $C_5$ is a constant independent of $\varepsilon$. Let $\varepsilon\to 0$ we have
\begin{equation}
    M=U^N(\bar{t},\bar{\mu})-W^N(\bar{t},\bar{\mu})-\varepsilon_0\phi(\bar{t},\bar{\mu})\leq 0. 
\end{equation}
This contradicts Equation \ref{equation33}.
\section{Conclusion}

In this paper, we investigated an Itô-type formula for measure-valued processes, which can be interpreted either as solutions to certain stochastic partial differential equations (SPDEs) or as decompositions akin to the classical semimartingale decomposition. The primary contributions of this work are as follows:

In the first part, we derived an Itô-type formula for controlled superprocesses in the one-dimensional setting. This was achieved by approximating second-order linear derivatives through a linear combination of products of unary functions. Subsequently, we extended the formula to the multi-dimensional and time-dependent cases, demonstrating that it holds for other measure-valued processes with similar SPDE structures. These findings offer new insights into the analysis of SPDEs and contribute to advancing the understanding of control problems for measure-valued processes.

The second part of the paper focused on applications of the derived formula to the control problem of superprocesses. By following the dynamic programming approach presented in \cite{bib2}, we established the dynamic programming principle (DPP) and derived both the Hamilton-Jacobi-Bellman (HJB) equation and the corresponding verification theorem for the control problem. The Itô formula played a crucial role in these derivations.

In the third part, we proposed a heuristic definition for a new type of linear derivative of finite measures, and we introduced the notion of a viscosity solution to an equation involving these derivatives. We demonstrated that the value function is the viscosity solution to the HJB equation in this context and proved the uniqueness of the viscosity solution when the second derivative of finite measures is absent.

\bibliographystyle{plain}

\end{document}